\documentclass[11pt,a4paper]{article}
\usepackage[utf8]{inputenc}

\usepackage{mathptmx}

\usepackage[T1]{fontenc}

\usepackage{amsmath}
\usepackage{amsfonts}
\usepackage{amssymb}
\usepackage{amsthm}

\usepackage[left=2cm,right=2cm,top=2cm,bottom=2cm]{geometry}

\usepackage{hyperref}
\usepackage{mathtools}
\mathtoolsset{showonlyrefs}

\usepackage{caption}
\usepackage[labelformat=simple]{subcaption}

\usepackage{enumerate}

\usepackage{cite}
\makeatletter
\newcommand{\citecomment}[2][]{\citen{#2}#1\citevar}
\newcommand{\citeone}[1]{\citecomment{#1}}
\newcommand{\citetwo}[2][]{\citecomment[,~#1]{#2}}
\newcommand{\citevar}{\@ifnextchar\bgroup{;~\citeone}{\@ifnextchar[{;~\citetwo}{]}}}
\newcommand{\citefirst}{\@ifnextchar\bgroup{\citeone}{\@ifnextchar[{\citetwo}{]}}}
\newcommand{\cites}{[\citefirst}
\makeatother

\theoremstyle{plain}
\newtheorem{thm}{Theorem}[section]
\newtheorem{lemma}[thm]{Lemma}
\newtheorem{prop}[thm]{Proposition}
\newtheorem{cor}[thm]{Corollary}

\theoremstyle{remark}
\newtheorem{remark}[thm]{Remark}
\newtheorem{hyp}[thm]{Assumption}

\theoremstyle{definition}

\newtheorem{definition}[thm]{Definition}

\usepackage{bbm}


\newcommand{\N}{\mathbb{N}}
\newcommand{\Z}{\mathbb{Z}}
\newcommand{\PP}{\mathbb{P}}
\newcommand{\EE}{\mathbb{E}}
\newcommand{\RR}{\mathbb{R}}
\newcommand{\R}{\mathbb{R}}

\newcommand{\TT}{\mathbb{T}}
\newcommand{\M}{\mathbb{M}}

\newcommand{\cF}{\mc{F}}
\newcommand{\cP}{\mc{P}}

\newcommand{\one}{\mathbbm{1}}

\newcommand{\e}{\varepsilon}

\newcommand{\mc}{\mathcal}
\newcommand{\mb}{\mathbb}

\newcommand{\be}{\begin{equation}}
\newcommand{\ee}{\end{equation}}

\newcommand*\dd{\mathop{}\!\mathrm{d}}

\renewcommand{\div}{\operatorname{div}}
\newcommand{\loc}{\text{loc}}

\DeclareMathOperator*{\Var}{Var}

\numberwithin{thm}{section}
\numberwithin{equation}{section}

\newcommand{\ds}{\displaystyle}

\title{A Probabilistic Mean-Field Limit for the Vlasov-Poisson System for Ions}
\author{Megan Griffin-Pickering \thanks{University of Z\"urich, Institute of Mathematics, Winterthurerstrasse 190, 8057 Z\"urich, Switzerland. \mbox{Email: \textsf{megan.griffin-pickering@math.uzh.ch}}}}
\begin{document}

\maketitle

\begin{abstract}
The Vlasov-Poisson system for ions is a kinetic equation for dilute, unmagnetised plasma. It describes the evolution of the ions in a plasma under the assumption that the electrons are thermalized. Consequently, the Poisson coupling for the electrostatic potential contains an additional exponential nonlinearity not present in the electron Vlasov-Poisson system.

The system can be formally derived through a mean-field limit from a microscopic system of ions interacting with a thermalized electron distribution. However, it is an open problem to justify this limit rigorously for ions modelled as point charges. 
Existing results on the derivation of the three-dimensional ionic Vlasov-Poisson system, obtained by the author and Iacobelli [J. Math. Pures Appl. 135 (2020), pp. 199–255], require a truncation of the singularity in the Coulomb interaction at spatial scales of order $N^{- \beta}$ with $\beta < 1/15$, which is more restrictive than the available results for the electron Vlasov-Poisson system. 

In this article, we prove that the Vlasov-Poisson system for ions can be derived from a microscopic system of ions and thermalized electrons with interaction truncated at scale $N^{- \beta}$ with $\beta < 1/3$.
We develop a generalisation of the probabilistic approach to mean-field limits developed in the works of Boers and Pickl  [J. Stat. Phys. 164(1) (2016), pp. 1--16] and Lazarovici and Pickl [Arch. Ration. Mech. Anal. 225(3) (2017), pp. 1201--1231] that is applicable to interaction forces defined through a nonlinear coupling with the particle density.
The proof is based on a quantitative uniform law of large numbers for convolutions between empirical measures of independent, identically distributed random variables and locally Lipschitz functions.
\end{abstract}

\section{Introduction}

The Vlasov-Poisson system for ions (also known as the Vlasov-Poisson system \emph{with massless electrons}) is a kinetic model for dilute plasma, in which positively charged ions interact electrostatically both with each other and with a population of (`massless') electrons assumed to be thermalized. The system with periodic boundary conditions reads as follows:
\be \label{eq:vpme}
\begin{cases}
\partial_t f + v \cdot \nabla_x f - \nabla_x \Phi [ \rho _f ] \cdot \nabla_v f = 0, \\
- \Delta_x \Phi[\rho_f] = \rho_f - e^{\Phi[\rho_f]}, \\
\ds \rho_f(t,x) : = \int_{\R^d} f(t,x,v) \dd v \\
f \vert_{t=0} = f_0 \geq 0, \; \ds \int_{\TT^d \times \R^d} f_0 \dd x \dd v = 1 .
\end{cases} \qquad (x,v) \in \TT^d \times \R^d
\ee
The unknown $f = f(t,x,v)$ is interpreted as the phase space distribution of ions at time $t$. The system \eqref{eq:vpme} may in principle be posed in any dimension $d \geq 1$; however, the physical case $d=3$ is of particular interest.

The key difference from the well-known electron model (\cite{Vlasov}, see also \cite{Jeans} for the gravitational setting) is the exponential nonlinearity in the Poisson-Boltzmann equation for the potential $\Phi = \Phi[\rho_f]$.
This term represents the distribution of electrons in thermal equilibrium, following a \emph{Maxwell-Boltzmann law} \cites[Section 1.6]{Bellan}[Section 2.4]{Lieberman-Lichtenberg}, and creates several additional mathematical difficulties: of particular relevance to the present article is the lack of an explicit representation of the interaction force as a convolution between the ion density $\rho_f$ and a given kernel, which precludes the use of techniques that rely on such a representation.

It is a fundamental problem to understand how kinetic partial differential equations such as \eqref{eq:vpme} arise from the underlying particle systems they are intended to describe. In this article, we consider the mean-field limit for the system \eqref{eq:vpme}. In our main result, we derive \eqref{eq:vpme} rigorously from an $N$-particle system of ions with cut-off Coulomb interaction and thermalized electrons. We can allow the cut-off radius to vanish at rate $N^{-1/d + \e}$ for any $\e > 0$.

\begin{paragraph}{The Massless Electrons Model.}
The assumption that, from the ions' point of view, electrons within a plasma are distributed according to the Maxwell-Boltzmann law is justified physically
by the observation that the mass ratio between a single electron ($m_e$) and a single ion ($m_i$) is very small: for example, for hydrogen $m_e / m_i$ is roughly $5 \times 10^{-4}$.
Consequently, the average speed of the electrons is typically much faster than that of the ions, leading to a separation of timescales. Moreover, the electron collision frequency can be significant on timescales where ion-ion collisions are negligible \cite[Section 1.9]{Bellan}.
Therefore, on such timescales,
convergence of the electron distribution towards thermal equilibrium is expected, with the rate of this convergence being faster for smaller mass ratio.

In the \emph{massless electron} regime ($m_e / m_i \to 0$), the electrons are expected to assume the thermal equilibrium distribution essentially instantaneously: this equilibrium distribution is of the form
\be
f_{\text{electron}}(x,v) = \mc{Z}^{-1} \exp \left ( - \frac{1}{k_B T_e} [ q_e \tilde \Phi(x) + \frac{1}{2} |v|^2 ] \right ), \quad - \varepsilon_0 \Delta_x \tilde \Phi = q_i \rho_{\text{ion}} + q_e \rho_{f_{\text{electron}}}
\ee
where $\tilde \Phi$ represents the electrostatic potential in physical units, $\rho_{\text{ion}}$ is the spatial density of ions, $q_e$ and $q_i$ are the respective charges of an electron and an ion, $T_e$ is the electron temperature, $k_B$ is the Boltzmann constant, $\varepsilon_0$ is the vacuum permittivity, and the partition function $\mc{Z}$ is a normalising constant chosen such that $f_{\text{electron}}$ has total integral one (see e.g. \cite{GPIProcWP} for an informal derivation of this relation in the massless electron limit from a coupled electron-ion system). Since $q_e < 0$, rescaling of the physical constants results in the system \eqref{eq:vpme}, with the term $e^\Phi$ in the Poisson-Boltzmann equation representing the spatial density of electrons; note that, on the torus, the normalisation $\int_{\TT^d} e^\Phi \dd x = 1$ is enforced by the fact that the Poisson equation $\Delta_x \Phi = g$ can only have a solution if the source term $g$ has integral zero (equivalently, by addition of a constant to $\Phi$). 

The model \eqref{eq:vpme} appears in the physics literature \cite{GurevichPitaevsky75}, including in the modelling of phenomena such as ion-acoustic shocks \cite{BPLT1991, Mason71, SCM} and the expansion of plasma into vacuum \cite{Medvedev2011}.

The massless electron limit has been studied from a mathematical point of view, with several works deriving models of a similar type to \eqref{eq:vpme} from various coupled systems describing the interaction of ions and electrons within a plasma.
In \cite{BGNS} a system similar to \eqref{eq:vpme} (with time-dependent electron temperature) was derived from a Vlasov-Poisson-Boltzmann system incorporating ion-ion and electron-electron collisions, under a priori bounds on the regularity of solutions to the collisional equations.
Long-time convergence to equilibrium of the Vlasov-Poisson-Fokker-Planck system was proved in \cite{BouchutDolbeault}, and the massless electron limit from a coupled Vlasov-Poisson system with external magnetic field and Fokker-Planck electron-electron collision term was studied in \cite{Herda}.
Recently \cite{FlynnGuo, Flynn}, the Vlasov-Poisson-Landau system for ions was derived in the massless electron limit from a coupled ion-electron system with Landau collision terms, including \emph{cross} collision terms between ions and electrons.

The well-posedness theory of \eqref{eq:vpme} was largely open until relatively recently. Global existence of $L^p$ weak solutions was shown for the model posed on $\R^3$ by Bouchut \cite{Bouchut}, however these solutions were not known to conserve either energy or regularity of the initial data, and no uniqueness result was available. 
Global well-posedness of classical $C^1$ solutions was shown for the one-dimensional torus $\TT^1$ by Han-Kwan and Iacobelli \cite{HKI1}.
In three dimensions, global well-posedness for classical solutions with large data was proved by the author and Iacobelli: see \cite{GPIWP} for the cases $\TT^2, \TT^3$ and \cite{GPIWPR3} for $\R^3$.
The case of bounded convex 3D domains was considered by Cesbron and Iacobelli \cite{CesbronIacobelli}.

See \cite{GPIProcWP} for a more in-depth discussion of the modelling and well-posedness theory of \eqref{eq:vpme}, including a more detailed bibliography and context of analogous results for the electron Vlasov-Poisson system.

\end{paragraph}

\begin{paragraph}{The Mean-Field Limit for Vlasov-Poisson and Related Systems.}
The subject of mean-field limits for deriving PDEs describing large many-particle systems boasts a vast and well-developed literature.
For an overview of the various possible approaches and applications, we refer the reader to the excellent surveys \cite{Golse, JabinReview, JabinWangReview, ChaintronDiez1, ChaintronDiez2}.
Here we will focus principally on particle systems obeying deterministic second-order dynamics with singular potentials.

An archetypal model takes the form
\be \label{eq:GeneralParticles}
\begin{cases}
\frac{\dd}{\dd t} X_i = V_i \\
\frac{\dd}{\dd t} V_i = - \frac{1}{N} \sum_{j \neq i} \nabla \Psi (X_i - X_j)
\end{cases}
\quad 
i \in \{1, \ldots, N \} ,
\ee
for some interaction potential $\Psi$. If $\Psi$ is taken to be the Coulomb potential, then \eqref{eq:GeneralParticles} represents a system of $N$ point charges experiencing electrostatic interactions with each other and a fixed uniform background of ions. The formal limit as $N\to \infty$ is the
electron Vlasov-Poisson system.
However, the rigorous justification of this limit remains an open problem in full generality.

The mean-field limit was proved for 
Lipschitz interaction forces ($\nabla \Psi \in W^{1, \infty}$) in the works \cite{NeunzertWick, BraunHepp, Dobrushin}.
The Vlasov-Poisson case is excluded from these results since the Coulomb force kernel $K$ has a point singularity for vanishing inter-particle separation ($|K(x)| \sim |x|^{-2}$ as $|x| \to 0$ in dimension $d=3$).
The mean-field limit for the one-dimensional electron Vlasov-Poisson system, where the Coulomb singularity is a jump discontinuity, was proved by Hauray \cite{Hauray14}.
For forces with `weak' singularities of type $\nabla \Psi(x) \sim |x|^{-\alpha}$ with $0 < \alpha < 1$ the mean limit was obtained by Hauray and Jabin \cite{HaurayJabin07, HaurayJabin15}.
See also Jabin and Wang \cite{JabinWang} for the case $\nabla \Psi \in L^\infty$. 
More recently, Bresch, Duerinckx and Jabin \cite{BreschDuerinckxJabin} proved the mean-field limit for forces in $L^2 + L^\infty$ (here corresponding to $\alpha < \frac{d}{2}$) through a novel duality method.

For stronger singularities, many authors have considered interaction kernels with an \emph{$N$-dependent truncation} that is removed simultaneously with the mean-field limit. Hauray and Jabin \cite{HaurayJabin15} derived the Vlasov equation for the cases $\alpha \in (1, d-1)$ from regularised $N$-particle systems with interaction kernel truncated at scale $|x| \lesssim N^{-\beta}$ with $0 < \beta < \frac{1}{2d} \min \{ \frac{d-2}{\alpha - 1}, \frac{2 d-1}{\alpha} \}$.
In the Coulomb case $\alpha = d-1$, Kiessling \cite{Kiessling} obtained the limit (without rate) assuming an a priori uniform-in-$N$ moment bound on the microscopic forces, whose validity for typical initial data is however at present open. 
Lazarovici \cite{Lazarovici} derived the electron Vlasov-Poisson system from an `extended charges' model with interaction truncated at order $|x| \sim N^{- \frac{1}{d(d+2)} + \e}$ for any $\e > 0$. 

The results discussed so far use methods that are predominantly `deterministic' in their treatment of the particle dynamics, with randomness appearing mainly in showing that the initial data for which the mean-field limit holds are typical. 
Boers and Pickl \cite{BoersPickl} developed a `probabilistic' approach to the mean-field limit, in which stochasticity of the initial data is used along the flow to control the dynamical evolution.
Using this approach they could obtain the limit for singularities of order $\alpha < 2$ (in dimension $d=3$) with a cut-off radius of order $N^{-1/3}$---notably, this corresponds to the scale of the typical separation of particles. 
Lazarovici and Pickl \cite{LazaroviciPickl} were then able to extend the method to cover the missing extremal case $\alpha = 2$, proving a mean-field derivation for the 3D electron Vlasov-Poisson system with cut-off at order $N^{-1/3 + \e}$. 
The cut-off was subsequently improved to $N^{-7/18 + \e}$ in the thesis of Grass \cite{GrassThesis} and to $N^{-5/12 + \e}$ by Feistl-Held and Pickl \cite{FHPickl253D}. By similar techniques, in two dimensions, Feistl-Held and Pickl  \cite{FHPickl252D} have shown the mean-field limit for the 2D electron Vlasov-Poisson system for arbitrarily small cut-off of scale $N^{-\beta}$ for any $0 < \beta \leq 2$.

We conclude the discussion of the mean-field limit for electron Vlasov-Poisson by mentioning some results not requiring truncation.
In the monokinetic case, Serfaty and Duerinckx \cite{SerfatyDuerinckx} proved the derivation of the pressureless Euler-Poisson system in the mean-field limit from second-order Coulomb dynamics.
For the Vlasov-Poisson-Fokker-Planck system, where the microscopic system includes independent idiosyncratic Brownian noise, the mean-field limit was recently proven in dimension $d=2$, without cut-off, in \cite{BreschJabinSolerVFP}.

For the ion model~\eqref{eq:vpme}, an analogous microscopic model to \eqref{eq:GeneralParticles} describes ions in the plasma as point charges, interacting with a thermalized electron distribution:
\be \label{eq:IonsSing}
\begin{cases}
\frac{\dd}{\dd t} X_i = V_i \\
\frac{\dd}{\dd t} V_i = \frac{1}{N} \sum_{j \neq i} K (X_i - X_j) - K \ast e^\Phi (X_i)
\end{cases}
\quad 
i \in \{1, \ldots, N \} ,
\ee
where 
\be
- \Delta \Phi =  \frac{1}{N} \sum_{j=1}^N \delta_{X_j} - e^\Phi .
\ee
As for the model for electrons, the interaction in this system becomes singular when $|X_i - X_j| \sim 0$. Unlike the electron model, and all models of the form \eqref{eq:GeneralParticles}, the interaction in \eqref{eq:IonsSing} depends \emph{nonlinearly} on the particle distribution.

The first result on the mean-field limit for \eqref{eq:vpme} was proven by Han-Kwan and Iacobelli \cite{HKI1} in dimension $d=1$.
In dimension $d=2,\,3$, \eqref{eq:vpme} was derived by the author and Iacobelli \cite{GPIMFQN} from
a system with regularised Coulomb interaction in which the ions are modelled as extended charges of radius order $r(N) \sim N^{-\frac{1}{d(d+2)}+\e}$ for any small $\e >0$,
using a deterministic method related to the techniques of Lazarovici \cite{Lazarovici}.
This left a gap between the available results for the ion and electron models.

In this article we narrow this gap by obtaining the limit for the ion model with an improved cut-off of $N^{-\frac{1}{d} + \e}$. To do this, we develop a probabilistic approach in the style of \cite{LazaroviciPickl}, whose novelty is that it is suitable for a nonlinear, implicitly defined interaction coupling.

\end{paragraph}

\begin{paragraph}{The Probabilistic Approach to the Mean-Field Limit for Nonlinear Coupling.}

In this article we develop a probabilistic approach to the mean-field limit that is suitable for particle systems with interaction forces satisfying a nonlinear coupling.
A key difficulty in applying the probabilistic approach of \cite{BoersPickl, LazaroviciPickl} in the nonlinear case is that the interaction force is not expressed directly in a form to which we can apply a (finite-dimensional) law of large numbers.

To understand this, consider first the electron model. Suppose that $(X_i, V_i)_{i=1}^N$ are independent random variables with identical distribution $f$.
Then the force exerted on particle $i$ by the other particles is $\frac{1}{N} \sum_{j \neq i}^N K(X_i - X_j)$. This expression is exactly a normalised sum of independent random variables to which the law of large numbers can be applied, so that we can expect, for large $N$,
\be
\frac{1}{N} \sum_{j \neq i}^N K(X_i - X_j) \approx \EE[ K(X_i - X) \vert X_i ] = K \ast \rho_f(X_i)
\ee
(here $X$ denotes a random variable independent of $X_i$ with density $f$).
A main element of the method of \cite{BoersPickl, LazaroviciPickl} is to quantify the rate of this convergence in probability as $N \to \infty$. Importantly, for linear couplings, this is required only at the finite collection of points $ X_1, \ldots, X_N $.

For the ion model, the force cannot be written directly in the form $\frac{1}{N} \sum_{j \neq i}^N \nabla \Psi (X_i - X_j)$. Instead, it is the solution of a nonlinear equation in which the particle distribution appears as a source term.
In order to use a probabilistic approach, we therefore require a (quantitative) law of large numbers holding in the sense of a function space rather than in a pointwise sense.

In this article, we derive the three-dimensional Vlasov-Poisson system for ions \eqref{eq:vpme} from a regularised approximation of \eqref{eq:IonsSing} with cut-off radius vanishing at rate no faster than $N^{-\frac{1}{d} + \e}$ as $N \to \infty$. The convergence holds on arbitrarily large time intervals, for as long as a sufficiently regular solution of \eqref{eq:vpme} exists.
To do this, we prove a concentration estimate for the convergence in $L^\infty$ sense of the convolution of an empirical measure with a $C^1$ function. Our estimate is well adapted to situations where the gradient becomes large at a point, allowing us to obtain the same asymptotics as Lazarovici and Pickl \cite{LazaroviciPickl} did for the electron model.
Additionally, we make use of the recently developed technique of nonlinearly anisotropically weighted kinetic `distances' (Iacobelli \cite{Iacobelli}) in order to streamline the proof.

\end{paragraph}

\subsection{Main Result}

The main result of this article, Theorem~\ref{thm:main}, is a rigorous derivation of the ionic Vlasov-Poisson system \eqref{eq:vpme} in the mean-field limit from a system of particles with cut-off interactions, where the scale of the cut-off vanishes with rate $r \geq b N^{- \frac{1}{d} + \e}$ for some constant $b>0$.
In this subsection we introduce the particle system in question and then state the main theorem.

\begin{paragraph}{Notation.}

Throughout the paper, we take the spatial domain to be $\TT^d$, the flat torus of dimension $d$.
Unless otherwise specified,
points $x \in \TT^d$ will by default be identified with their representatives in the fundamental domain $\left [ - \frac{1}{2}, \frac{1}{2} \right )^d$.
$\TT^d$ is equipped with the standard metric
\be \label{def:TorusDistance}
|x|_{\TT^d} : = \inf_{k \in \Z^d} |x + k| ,
\ee
where $| \cdot |$ is the usual Euclidean distance on $\R^d$.

For any $r > 0$ and $x \in \TT^d$, $B_r(x ; \TT^d)$ denotes the open ball in $\TT^d$ of radius $r$ and centre $x$.
$\overline{B_r}(x ; \TT^d)$ denotes the corresponding closed ball. Similarly $B_r(x ; \R^d)$ and $\overline{B_r}(x ; \R^d)$ denote respectively the open and closed balls in $\R^d$ of radius $r$ and centre $x \in \R^d$. Where the domain is clear from context, these notations are abbreviated to $B_r(x)$, $\overline{B_r}(x)$.

For a metric space $\M$, $\cP(\M)$ denotes the set of probability measures on $\mb{M}$, equipped with the topology of weak convergence of measures.

The Lebesgue spaces are denoted by $L^p(E)$ ($p \in [1, + \infty]$), where $E$ is any of the spaces $\TT^d$, $\R^d$ or $\TT^d \times \R^d$ equipped with the Lebesgue measure, or $L^p$ where $E$ is clear from context. $L^p_+(E)$ denotes the set of \emph{non-negative} $L^p(E)$ functions.

\end{paragraph}

\subsubsection{Particle Dynamics with Smoothed Interactions}

In this subsection, we set up the particle system from which we will derive the PDE \eqref{eq:vpme}. The system represents $N$ ions interacting with a background of thermalized electrons. However, rather than modelling the ions as point charges, we truncate the Coulomb interaction at scale $|x|_{\TT^d} \sim r > 0$ in order to remove the singularity that appears when two ions have a small spatial separation.
Later, $r$ will be taken to depend on the number of ions $N$, with $r \sim N^{- \frac{1}{d} + \e}$ for some $\e > 0$. 

To define the smoothed interaction,
 we first fix a mollifier function. Let $\chi \in C^\infty(\R^d ; [0 +\infty) )$ be a smooth, non-negative, radially symmetric function with compact support contained in $\overline{B_1}(0)$. Normalise $\chi$ to have total integral $\| \chi \|_{L^1} = 1$. Then, for any $r \in (0, \frac{1}{2} )$, we define a mollifier at scale $r$ on the torus $\TT^d$.

\begin{definition} \label{def:chi1}
For any $r \in (0, \frac{1}{2})$,
$\chi_r : \TT^d \to [0, +\infty)$ is the function defined by
\be \label{def:chi}
\chi_r (x) : = r^{-d} \chi \left ( \frac{x}{r} \right ) ,
\ee
where here each $x \in \TT^d$ is identified with its representative in $\left [-  \frac{1}{2}, \frac{1}{2} \right )^d$.
\end{definition}

Note that the restriction $r \in (0, \frac{1}{2})$ ensures that the support of $\chi_r$ is compactly contained in the open unit square $\left ( -\frac{1}{2}, \frac{1}{2} \right )^d$. Hence $\chi_r$ can be extended smoothly to a periodic function, so that Equation \eqref{def:chi} indeed specifies a well-defined $C^\infty$ function on $\TT^d$.

We use the mollifiers $\{ \chi_r \}_{r>0}$ to define a regularised approximation of the Coulomb force.

\begin{paragraph}{Notation for Particle Systems.}
The state of a particle system containing $N$ particles is represented by an $N$-tuple $({\bf X},{\bf V}) = (X_i, V_i )_{i=1}^N \in (\TT^d \times \R^d)^N$.
Each point $(X_i, V_i ) \in \TT^d \times \R^d $ is interpreted as the state of the $i$th particle in \emph{phase space}: $X_i \in \TT^d$ represents the particle's position, while $V_i \in \R^d$ represents the particle's velocity. All particles are assumed to share the same mass, so that $V_i$ is equivalent to the momentum of particle $i$, up to a global constant which we will set without loss of generality to one.

Any $N$-tuple $({\bf X},{\bf V})$ has a corresponding \emph{empirical measure} $\mu_{{\bf X},{\bf V}} \in \mc{P}(\TT^d \times \RR^d)$, which is a probability measure placing a Dirac mass (with uniform weight $\frac{1}{N}$) at the state $(X_i, V_i)$ of each particle:
\be
\mu_{{\bf X},{\bf V}} : = \frac{1}{N} \sum_{i = 1}^N \delta_{(X_i, V_i)} .
\ee
We will also use the notation $\mu_{\bf X} \in \mc{P}(\TT^d)$ for the empirical spatial measure:
\be
\mu_{{\bf X}} = \frac{1}{N} \sum_{i = 1}^N \delta_{X_i} .
\ee

\end{paragraph}

\begin{definition} \label{def:ParticleSystem}
Let $r \in (0, \frac{1}{2})$ and $N \in \N$.
Let $({\bf x}^{0,\,N},{\bf v}^{0,\,N}) = (x_i^{0,\,N}, v_i^{0,\,N} )_{i=1}^N \in (\TT^d \times \RR^d)^N$ be a given initial configuration. 
Then $({\bf X}^{r,\,N},{\bf V}^{r,\,N}) : [0, + \infty) \to (\TT^d \times \RR^d)^N$ denotes the solution of the following system of ODEs: 
writing $({\bf X}^{r,\,N}(t),{\bf V}^{r,\,N}(t)) = (X_i^{r,\,N}(t), V_i^{r,\,N}(t) )_{i=1}^N$,
\be \label{eq:ODE-N}
\begin{cases}
\frac{\dd}{\dd t} X_i^{r,\,N} = V_i^{r,\,N} \\
\frac{\dd}{\dd t} V_i^{r,\,N} = - \nabla \Phi_r[ \mu_{{\bf X}^{r,\,N}}] (X_i^{r,\,N}),
\end{cases}
\quad (X_i^{r,\,N}(0), V_i^{r,\,N}(0)) = (x_i^{0,\,N}, v_i^{0,\,N} ); 
\; \; \qquad  i = 1, \ldots, N,
\ee
where $\Phi_r [\mu_{{\bf X}^{r,\,N}}] : \TT^d \to \R$ denotes the solution of the equation
\begin{align}
- \Delta_x \Phi_r[\mu_{{\bf X}^{r,\,N}}] & = \chi_r \ast \mu_{{\bf X}^{r,\,N}} - e^{\Phi_r[\mu_{{\bf X}^{r,\,N}}]} \\
& = \frac{1}{N}  \sum_{j=1}^N \chi_r(\cdot  - X_j^{r,\,N}) - e^{\Phi_r[\mu_{{\bf X}^{r,\,N}}]} .
\end{align}
\end{definition}
\begin{remark}
The system \eqref{eq:ODE-N} is well-posed, globally in time, as can be shown using the methods of \cite[Lemma 6.2]{GPIWP}.
\end{remark}

\begin{remark} \label{rmk:abbrv}
Where $r = r(N)$, the notation $({\bf X}^{r,\,N},{\bf V}^{r,\,N})$ is abbreviated to $({\bf X}^{N},{\bf V}^{N})$.
\end{remark}

\subsubsection{Mean-Field Particle Dynamics}

The object of this article is to show that, for suitable choices of initial data and $r$ tending to zero at rate $r \sim N^{- \frac{1}{d} + \e}$, as $N$ tends to infinity the dynamics $({\bf X}^N, {\bf V}^N)$ of the interacting particle system \eqref{eq:ODE-N} closely approximate the \emph{mean-field} dynamics described by the ionic Vlasov-Poisson system. To do this, we seek to compare $({\bf X}^N, {\bf V}^N)$ to a solution of the following approximation of \eqref{eq:vpme}, in which the interaction has been regularised using $\chi_r$ in the same manner as used in the particle system in Definition~\ref{def:ParticleSystem}.
\be \label{eq:vpme-app}
\begin{cases}
\partial_t f_r + v \cdot \nabla_x f_r - \nabla_x \Phi_r [ \rho_{f_r} ] \cdot \nabla_v f_r = 0, \\
- \Delta_x \Phi_r[\rho_{f_r} ] = \chi_r \ast \rho_{f_r} - e^{\Phi_r[ \rho_{f_r} ] }, \\
f_r \vert_{t=0} = f_0 .
\end{cases} \qquad (x,v) \in \TT^d \times \R^d .
\ee
It can be shown, using the techniques of \cite{GPIWP}, that \eqref{eq:vpme-app} has a unique global solution $f_r$ for any initial probability measure $f_0 \in \cP(\TT^d \times \R^d)$.
$f_r$ may be represented as the \emph{pushforward} of $f_0$ along the flow $(Y^r, W^r)$ defined by the characteristic ODE of \eqref{eq:vpme-app}, which reads as follows: for all $(x,v) \in \TT^d \times \R^d$,
\be
\frac{\dd}{\dd t} Y^r(t;x,v) = W^r(t;x,v), \quad  \frac{\dd}{\dd t} W^r(t;x,v) = - \nabla \Phi_r [ \rho_{f_r}] (Y^r(t;x,v) ) ; \qquad (Y^r(0;x,v), W^r(0; x,v)) = (x, v ) .
\ee
That is, for any test function $\phi \in C_b(\TT^d \times \R^d)$,
\be \label{def:pushforward}
\int_{\TT^d \times \R^d} \phi(x,v) f_r(t, \dd x \dd v) = \int_{\TT^d \times \R^d} \phi \left (Y^r(t;x,v),W^r(t;x,v) \right ) f_0 (\dd x \dd v) \quad \text{for all } t \in [0,T).
\ee
However, to show that $f_r$ converges as $r$ tends to zero to a solution $f$ of \eqref{eq:vpme} typically requires additional hypotheses on $f_0$. We will assume the following uniform-in-$r$ estimate on the spatial density of $f_r$.

\begin{hyp} \label{hyp:f}
There exists $T > 0$, 
and a non-decreasing function $D: [0, T) \to [0,+\infty)$ such that
\be
\sup_{r \leq \frac{1}{2}} \sup_{s \in [0,t]} \| \rho_{f_r}(s, \cdot) \|_{L^\infty(\TT^d)} \leq D(t) \qquad \forall t \in [0,T) .
\ee
\end{hyp}

In dimension $d=2$ or $d=3$,
Assumption~\ref{hyp:f} holds with $T = + \infty$, for any $f_0 \in L^1 \cap L^\infty(\TT^d \times \R^d)$ satisfying, for some $k_0 > d$,
\be \label{hyp:f0}
f_0(x,v) \lesssim \frac{1}{1 + |v|^{k_0}}, \qquad \int_{\TT^d \times \RR^d} (1 + |v|^{k_0}) f_0(x,v) \dd x \dd v 
\ee
(this can be shown by combining a variant of \cite[Lemma 6.3]{GPIWP} with the techniques of \cite{GPIQN} for the improved moment condition).
In \cite{GPIWP} it was also shown that for $f_0$ satisfying \eqref{hyp:f0}, the Vlasov-Poisson system for ions \eqref{eq:vpme} has a global-in-time solution $f$, unique among the class of solutions with spatial density $\rho_f$ bounded in $L^\infty(\TT^d)$ locally uniformly in time, and that $f_r$ converges in the limit as $r$ tends to zero to $f$ in the sense of weak convergence of measures.

In fact, under Assumption~\ref{hyp:f} it is always the case that $f_r$ converges to the unique bounded density solution of \eqref{eq:vpme} on the time interval $[0,T)$, as we will show in Section~\ref{sec:VPMEConv} of this paper.
Moreover, the convergence holds in the sense that the characteristic flows converge uniformly (locally in time).

\begin{thm} \label{thm:VPMEconv}
Let $d \geq 2$.
Let $f_0 \in \cP(\TT^d \times \R^d)$ be fixed, and 
suppose that Assumption~\ref{hyp:f} holds. Then the ion Vlasov-Poisson system \eqref{eq:vpme} has a weak solution $f \in C \left ( [0, T) ; \mc{P}(\TT^d \times \R^d) \right )$.
This solution is unique among weak solutions of \eqref{eq:vpme} with initial datum $f_0$ and $\rho_f \in L^\infty_{\loc}( [0, T) ; L^\infty(\TT^d))$.

For each $r>0$, let $f_r$ denote the solution to the approximated ion Vlasov-Poisson system \eqref{eq:vpme-app} with the \emph{same} initial datum $f_0$. Then, as $r$ tends to zero, $f_r$ converges to $f$ in the sense of $C \left ( [0, T) ; \mc{P}(\TT^d \times \R^d) \right )$. 
Moreover, the corresponding characteristic flows converge uniformly on $\TT^d \times \R^d$, locally uniformly in time:
\be
\lim_{r \to 0} \| (Y^r - \overline Y, W^r - \overline W) \|_{L^\infty([0,t] \times \TT^d \times \R^d)} = 0, \quad  \text{for any} \; t \in [0, T) ,
\ee
where for any $(x,v) \in \TT^d \times \R^d$ the trajectory $(\overline Y(t ; x, v), \overline W(t ; x, v) )_{t \in [0,T)}$ satisfies the ODE
\be
\frac{\dd}{\dd t} \overline Y(t;x,v) = \overline W(t;x,v), \quad  \frac{\dd}{\dd t} \overline W (t;x,v) = - \nabla \Phi [ \rho_{f}] (\overline Y(t;x,v) ) ; \qquad (\overline Y(0;x,v), \overline  W (0; x,v)) = (x, v ) .
\ee

\end{thm}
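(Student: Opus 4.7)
The plan is to combine uniform-in-$r$ elliptic regularity for the regularised Poisson--Boltzmann potential $\Phi_r[\rho_{f_r}]$ with a direct Cauchy-sequence argument for the characteristic flows $(Y^r, W^r)$. The first ingredient comes essentially for free from Assumption~\ref{hyp:f}: since $\chi_r \ast \rho_{f_r}$ inherits the bound $\|\chi_r \ast \rho_{f_r}(t, \cdot)\|_{L^\infty} \leq D(t)$, the Poisson--Boltzmann estimates developed in \cite{GPIWP} (in which the monotone $e^{\Phi_r}$ term provides the crucial stabilising mechanism) yield
\be
\| \Phi_r[\rho_{f_r}] \|_{L^\infty} + \| \nabla \Phi_r[\rho_{f_r}] \|_{L^\infty} + [\nabla \Phi_r[\rho_{f_r}]]_{\text{log-Lip}} \leq C(D(t))
\ee
uniformly in $r \in (0, 1/2)$ and in $t \in [0, t_0]$ for any $t_0 < T$. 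In particular, the velocity support of $(Y^r, W^r)(t; \cdot)$ grows at most linearly in $t$, uniformly in $r$.

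Next, for two parameters $r, r' \in (0, 1/2)$, subtracting the characteristic ODEs and using the log-Lipschitz bound gives
\be
\frac{\dd}{\dd t} \bigl( |Y^r - Y^{r'}| + |W^r - W^{r'}| \bigr)(t; x, v) \leq C \omega \bigl( \| Y^r - Y^{r'} \|_{L^\infty_{x,v}} \bigr) + \bigl\| \nabla \Phi_r[\rho_{f_r}] - \nabla \Phi_{r'}[\rho_{f_{r'}}] \bigr\|_{L^\infty}
\ee
for an Osgood modulus $\omega$, uniformly in $(x,v)$ because the force depends only on position. The force difference $U := \Phi_r[\rho_{f_r}] - \Phi_{r'}[\rho_{f_{r'}}]$ satisfies, by the mean value theorem applied to the exponential,
\be
- \Delta U + a(t, x) U = \chi_r \ast \rho_{f_r} - \chi_{r'} \ast \rho_{f_{r'}}, \qquad a \geq 0,
\ee
so elliptic estimates control $\|\nabla U\|_{L^\infty}$ by a negative-regularity norm of the right-hand side. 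Splitting
\be
\chi_r \ast \rho_{f_r} - \chi_{r'} \ast \rho_{f_{r'}} = \chi_r \ast (\rho_{f_r} - \rho_{f_{r'}}) + (\chi_r - \chi_{r'}) \ast \rho_{f_{r'}},
\ee
the first summand is bounded, via the pushforward representation \eqref{def:pushforward}, by the Wasserstein distance between $f_r(t)$ and $f_{r'}(t)$ and hence by $\|Y^r - Y^{r'}\|_{L^\infty} + \|W^r - W^{r'}\|_{L^\infty}$; the second is a mollification error of size $\lesssim \max(r, r') D(t)$. An Osgood--Gronwall inequality then furnishes the Cauchy property of $(Y^r, W^r)$ in $C([0, t_0] \times \TT^d \times \R^d)$.

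Let $(\overline Y, \overline W)$ denote the resulting limit flow and define $f(t) := (\overline Y(t; \cdot), \overline W(t; \cdot))_\# f_0$. The uniform flow convergence upgrades automatically, via \eqref{def:pushforward}, to $f_r \to f$ in $C([0, T); \mc{P}(\TT^d \times \R^d))$, while lower semicontinuity of $\rho \mapsto \|\rho\|_{L^\infty}$ under weak convergence together with Assumption~\ref{hyp:f} ensures $\rho_f \in L^\infty_\loc([0, T); L^\infty(\TT^d))$. Stability of the Poisson--Boltzmann equation (again by the monotone exponential term) identifies $(\overline Y, \overline W)$ as the characteristic flow of \eqref{eq:vpme}, so $f$ is a weak solution. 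For uniqueness in the bounded-density class, the same log-Lipschitz force estimate applies directly to two putative solutions $f^1, f^2$, and they can be compared via the anisotropically weighted kinetic distance of Iacobelli~\cite{Iacobelli}; the resulting Gronwall-closable inequality forces $f^1 = f^2$ from a common datum, so the convergence $f_r \to f$ holds along the full family $r \to 0$ rather than merely along subsequences.

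The main obstacle is precisely the force estimate in the middle step: controlling $\|\nabla \Phi_r[\rho_{f_r}] - \nabla \Phi_{r'}[\rho_{f_{r'}}]\|_{L^\infty}$ by a quantity that closes the self-consistency loop with $\|Y^r - Y^{r'}\|_{L^\infty}$, uniformly in \emph{both} small parameters. The nonlinearity of the ion Poisson coupling and the mollification-induced loss of smoothness of the source both enter here, and it is the favourable sign of $e^\Phi$---which converts the linearisation into $-\Delta + a$ with $a \geq 0$---that permits the required uniform elliptic estimates at the $L^\infty$-scale demanded by the log-Lipschitz structure of the characteristic flow.
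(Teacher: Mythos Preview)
Your overall architecture---Cauchy-sequence argument for the flows $(Y^r,W^r)$, passage to a limit flow, identification of the pushforward as a bounded-density weak solution, and a separate uniqueness argument---matches the paper's strategy. The gap is in the central step, where you write that ``elliptic estimates control $\|\nabla U\|_{L^\infty}$ by a negative-regularity norm of the right-hand side'' and then bound the first summand by the Wasserstein distance. This is precisely the place where a naive argument fails: the estimate $\|\nabla U\|_{L^\infty}\lesssim \|\text{source}\|_{W^{-1,\infty}}$ for $-\Delta U+aU=\text{source}$ is a Calder\'on--Zygmund endpoint that does \emph{not} hold, since $\nabla^2 G\sim |x|^{-d}\notin L^1$. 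What the paper actually proves (Proposition~\ref{prop:PotentialStability}) is the sharper statement $\|\nabla\Phi_1-\nabla\Phi_2\|_{L^\infty}\lesssim_{D}\|K\ast(\rho_1-\rho_2)\|_{L^\infty}$, i.e.\ the correct ``negative norm'' is convolution with the Coulomb kernel itself; this works because the nonlinear piece $e^{\Phi_1}-e^{\Phi_2}$ gains enough integrability (via Lemma~\ref{lem:NLStability}) to be absorbed.

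With that norm in hand, the quantity you must control is $\|K_r\ast\rho_{f_r}-K_r\ast\rho_{f_q}\|_{L^\infty}$, and here a pure Wasserstein bound is again too crude. The paper invokes the estimate from \cite[Proposition~9.1]{LazaroviciPickl}, which uses the $L^\infty$ bound on the densities together with the local Lipschitz modulus $L_r$ of $K_r$ to obtain
\[
\|K_r\ast\rho_{f_r}-K_r\ast\rho_{f_q}\|_{L^\infty}\ \lesssim\ D(t)\,|\log r|\,\|Y^r-Y^q\|_{L^\infty}.
\]
The factor $|\log r|\sim\|L_r\|_{L^1}$ is unavoidable and does not appear in your sketch; it forces the Gr\"onwall step to be run on the anisotropically weighted quantity $|\log r|^{1/2}\|Y^r-Y^q\|_{L^\infty}+\|W^r-W^q\|_{L^\infty}$ (a linear weight here, not the nonlinear Iacobelli functional), yielding the rate $(r+q)\,e^{C(t)|\log r|^{1/2}}$ rather than a clean Osgood closure. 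Your ``Osgood--Gr\"onwall'' handles only the log-Lipschitz regularity term $\omega(\|Y^r-Y^{r'}\|)$, not this additional $|\log r|$ coming from the stability term, so the loop does not close as written.
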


In our main result (Theorem~\ref{thm:main}), we will compare $({\bf X}^N, {\bf V}^N)$ with $f_r$ in the limit $N \to +\infty$.
To do this, we use the method of e.g. \cite{Sznitman, BoersPickl, LazaroviciPickl}, 
introducing an \emph{auxiliary} $N$-particle system following the mean-field dynamics corresponding to $f_r$. The coupled and auxiliary particle systems are given the same initial data.

\begin{definition} \label{def:AuxParticles}
Let $0 < r < \frac{1}{2}$, and let $f_r$ be the solution of \eqref{eq:vpme-app}. For $t\geq0$, 
$({\bf Y}^{r,\,N}, {\bf W}^{r,\,N}) : [0, + \infty) \to (\TT^d \times \RR^d)^N$ denotes the solution of the following system of ODEs: 
writing $({\bf Y}^{r,\,N}(t), {\bf W}^{r,\,N}(t)) = (Y_i^{r,\,N}(t), W_i^{r,\,N}(t) )_{i=1}^N$,
\be \label{eq:ODE-aux}
\begin{cases}
\frac{\dd}{\dd t} Y_i^{r,\,N} = W_i^{r,\,N} \\
\frac{\dd}{\dd t} W_i^{r,\,N} = - \nabla \Phi_r [ \rho_{f_r}] (Y_i^{r,\,N}), 
\end{cases}
\quad (Y_i^{r,\,N}(0), W_i^{r,\,N}(0)) = (x_i^{0,\,N}, v_i^{0,\,N} ) ;
\; \; \qquad i \in \{ 1, \ldots, N \} , 
\ee
where $\Phi_r [\rho_{f_r}]$ is the solution of the equation
\begin{align}
- \Delta_x \Phi_r [\rho_{f_r}] & = \chi_r \ast \rho_{f_r} - e^{\Phi_r [\rho_{f_r}]} .
\end{align}
\end{definition}

\begin{remark} \label{rmk:ODEaux-decoupled}
Notice in particular that the ODE system \eqref{eq:ODE-aux} is \emph{decoupled}, such that the evolution of particle $i$ is not influenced by any other particle $j$, where $j \neq i$.
\end{remark}

\begin{remark} \label{rmk:AuxLaw}
Observe that $(Y_i^{r,\,N}, W_i^{r,\,N}) = (Y^r(t; x_i^{0,\,N}, v_i^{0,\,N}), W_i^r(t; x_i^{0,\,N}, v_i^{0,\,N}))$.
It then follows from the pushforward representation of $f_r$ \eqref{def:pushforward} that, if $(x_i^{0,\,N}, v_i^{0,\,N} )$ is chosen at random with law $f_0$, then $(Y_i^{r,\,N}(t), W_i^{r,\,N}(t))$ has law $f_r(t, \cdot)$, for any $t>0$.
\end{remark}

\begin{remark} 
As in Remark~\ref{rmk:abbrv},
where $r = r(N)$ the notation $({\bf Y}^{r,\,N},{\bf W}^{r,\,N})$ is abbreviated to $({\bf Y}^{N},{\bf W}^{N})$.
\end{remark}

\subsubsection{Selection of Initial Data}

We will choose the initial data points for the particle systems \eqref{eq:ODE-N} and \eqref{eq:ODE-aux} randomly so that each $(x_i^{0, \, N}, v_i^{0, \, N} )$ has law $f_0$. Furthermore, we will take them to be statistically \emph{independent}. 

To fix notation, let $(\Omega, \cF, \PP)$ be a probability space. For $N \in \N$ given and $i = 1, \ldots, N$, let $(x_i^{0, \, N}, v_i^{0, \, N}) : \Omega \to \TT^d \times \R^d$ be independent random variables, each with law induced by the probability density $f_0$. That is, under the measure
$\PP$,
\be
({\bf x}^{0, \, N}, {\bf v}^{0, \, N}) = (x_i^{0, \, N}, v_i^{0, \, N})_{i=1}^N \sim f_0^{\otimes N} .
\ee
For all $t \in (0, \frac{1}{2})$, the $N$-tuples $({\bf X}^{r, \,N}, {\bf V}^{r, \,N})$, $({\bf Y}^{r, \, N}, {\bf W}^{r, \,N})$ defined in Definitions~\ref{def:ParticleSystem} and \ref{def:AuxParticles} are then also random variables. Moreover, by Remark~\ref{rmk:ODEaux-decoupled}, the independence of $(Y_i^{r, \, N}(t), W_i^{r, \, N}(t))_{i=1}^N$ is preserved along the evolution. Hence,
for any $t \in [0, T)$ the random $N$-tuple $({\bf Y}^{r, \, N}(t), {\bf W}^{r, \, N}(t))$
has law
\be
({\bf Y}^{r, \, N}(t), {\bf W}^{r, \, N}(t)) \sim f_r(t, \cdot)^{\otimes N} .
\ee
In other words, $({\bf Y}^{r, \, N}(t), {\bf W}^{r, \, N}(t))$ is distributed like $N$ independent samples from $f_r(t, \cdot)$.

\subsubsection{Mean-Field Limit for the Ionic Vlasov-Poisson System}

Our main result states that the coupled particles $({\bf X}^N, {\bf V}^N)$ closely approximate the auxiliary particles $({\bf Y}^N, {\bf W}^N)$ with high probability as $r$ tends to zero with $r \sim N^{- \frac{1}{d} + \e}$.
We will use the following notation for the maximal distance between two (time-dependent) $N$-tuples, representing the dynamics of two particle systems with the same number of particles $N$.

\begin{paragraph}{Notation: Comparison of $N$-tuples.}
Let $({\bf X},{\bf V}) \in (\TT^d \times \R^d)^N$ be an $N$-tuple of points in $\TT^d \times \R^d$.
Then $| {\bf X}|_\infty$ and $| {\bf V}|_\infty$ denote the norms
\be
| {\bf X}|_\infty : = \max_i |X_i |_{\TT^d} , \qquad  | {\bf V}|_\infty : = \max_i |V_i | .
\ee

Now suppose that $({\bf X},{\bf V}): [0, T) \to (\TT^d \times \R^d)^N$ is a continuous function taking values in the space of $N$-tuples. For any $t \in [0, T)$, we use the notation $\| \cdot \|_{L^\infty([0,t])}$ to denote the supremum norm in time:
\be
\| {\bf X} \|_{L^\infty([0,t])} : = \sup_{s \in [0,t]} | {\bf X} (s) |_{\infty} , \qquad  \| {\bf V}  \|_{L^\infty([0,t])}  : = \sup_{s \in [0,t]}  | {\bf V} (s) |_\infty .
\ee

\end{paragraph}

We are now ready to state the main theorem of this article.

\begin{thm} \label{thm:main}
Let $d \geq 2$ and $f_0 \in \cP(\TT^d \times \R^d)$ be given, and suppose that
Assumption~\ref{hyp:f} is satisfied.
Assume that $r = r(N)$ tends to zero as $N$ tends to infinity with rate satisfying, for some $\e \in ( 0, \frac{1}{d})$ and $b>0$, $r = r(N) \geq b N^{- \frac{1}{d} + \e}$.

For $N\geq 1$, let the following hold:
\begin{itemize}
\item The points $(x_i^{0, \, N}, v_i^{0, \, N})_{i=1}^N$ are drawn independently from $f_0$, the initial datum for \eqref{eq:vpme}:
\be
(x_i^{0, \, N}, v_i^{0, \, N})_{i=1}^N \sim f_0^{\otimes N} ;
\ee
\item $({\bf X}^N,{\bf V}^N)$ is the solution of the particle system with cut-off interactions \eqref{eq:ODE-N}, with initial data $(x_i^{0, \, N}, v_i^{0, \, N})_{i=1}^N$.
\item $({\bf Y}^N,{\bf W}^N)$ is the solution of the auxiliary system \eqref{eq:ODE-aux}, with the same initial data $(x_i^{0, \, N}, v_i^{0, \, N})_{i=1}^N$.
\end{itemize}

Then, for all $t \in [0, T)$, for any $\sigma \in (0, d \epsilon )$ there exists a positive constant $C>0$ (which depends on $t$, $D(t)$, $d$, $\e$, $\sigma$, $b$, and the choice of mollifier $\chi$), such that 
\be
\PP \left (  \|{ \bf X}^N - {\bf Y }^N \|_{L^\infty([0,t])}  + \| {\bf V }^N - {\bf W}^N\|_{L^\infty([0,t])}  \geq r \right ) \leq  C e^{- C N^\sigma} . 
\ee
\end{thm}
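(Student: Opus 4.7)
I will adapt the probabilistic coupling method of Boers-Pickl and Lazarovici-Pickl to the nonlinear Poisson-Boltzmann setting. The centerpiece is a trajectory-wise comparison between the coupled particles $({\bf X}^N,{\bf V}^N)$ and the decoupled auxiliary particles $({\bf Y}^N,{\bf W}^N)$, which by Remark~\ref{rmk:AuxLaw} form an i.i.d.\ sample of size $N$ from $\rho_{f_r}(t,\cdot)$ at every time $t$. The key difficulty relative to the electron case is that the microscopic force is not an empirical sum of pair interactions but is instead obtained from $\mu_{{\bf X}^N}$ by solving a nonlinear elliptic PDE; hence a finite-dimensional law of large numbers cannot be applied directly to the force, and must be replaced by a \emph{uniform in $x$} concentration bound.

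\textbf{Gronwall scheme.} For each $t \in [0,T)$ I work with a stability functional of the form
\[
\mathcal{D}(t) := \max_{1\leq i \leq N}\Bigl(|X_i^N(t)-Y_i^N(t)|_{\TT^d} + |V_i^N(t)-W_i^N(t)|\Bigr),
\]
possibly composed with a nonlinear Iacobelli-type weight to absorb logarithmic factors in the Gronwall constant. Differentiating in time and using $\dot X_i^N-\dot Y_i^N = V_i^N - W_i^N$ reduces the problem to controlling $\max_i |\nabla \Phi_r[\mu_{{\bf X}^N}](X_i^N) - \nabla \Phi_r[\rho_{f_r}](Y_i^N)|$, which I split in the standard way into a consistency piece $\nabla\Phi_r[\rho_{f_r}](X_i^N)-\nabla\Phi_r[\rho_{f_r}](Y_i^N)$, a stability piece $\nabla(\Phi_r[\mu_{{\bf X}^N}]-\Phi_r[\mu_{{\bf Y}^N}])(X_i^N)$, and a fluctuation piece $\nabla(\Phi_r[\mu_{{\bf Y}^N}]-\Phi_r[\rho_{f_r}])(X_i^N)$. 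The consistency piece is absorbed by the $W^{1,\infty}$ regularity of $\nabla\Phi_r[\rho_{f_r}]$ available from Assumption~\ref{hyp:f}. For the stability piece, subtracting the two Poisson-Boltzmann equations and linearizing the exponential yields $-\Delta h + V_r h = \chi_r*(\mu_{{\bf X}^N}-\mu_{{\bf Y}^N})$ with $V_r \geq 0$; on the good event where both $\chi_r*\mu_{{\bf X}^N}$ and $\chi_r*\mu_{{\bf Y}^N}$ remain uniformly bounded, elliptic regularity for this Schr\"odinger-type equation allows me to bound this piece by a tame multiple of $\mathcal{D}(t)$.

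\textbf{Fluctuation estimate --- the main obstacle.} The decisive step is a quantitative uniform law of large numbers
\[
\PP\Bigl(\|\nabla(\Phi_r[\mu_{{\bf Y}^N}]-\Phi_r[\rho_{f_r}])\|_{L^\infty(\TT^d)} > c_0 N^{-1/d+\varepsilon}\Bigr) \leq C e^{-C N^\sigma}
\]
for any $\sigma\in(0,d\varepsilon)$. Since the $Y_i^N$ are i.i.d.\ with density $\rho_{f_r}$, linearizing the Poisson-Boltzmann equation expresses $\nabla h$ as the convolution of the centred empirical fluctuation $\mu_{{\bf Y}^N}-\rho_{f_r}$ against an effective Coulomb-type kernel regularized at scale $r$. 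A pointwise Bernstein bound at fixed $x$, using that this effective kernel has $L^\infty$ norm $\sim r^{-(d-1)}$ but only a mildly singular $L^2$ norm $\sim r^{-(d-2)/2}$, delivers at the target scale $\lambda \sim N^{-1/d+\varepsilon}$ an exponent of order $N\lambda^2 r^{d-2} \sim N^{d\varepsilon}$, matching the Lazarovici-Pickl asymptotics. A covering argument over a grid of polynomial size, exploiting the locally Lipschitz structure of the kernel (Lipschitz constant polynomial in $N$), upgrades this to the claimed uniform-in-$x$ bound while losing only a polynomial factor in the union bound. I expect the main obstacle to be ensuring that this Bernstein-plus-covering scheme remains valid when the effective kernel depends on the random intermediate potential $V_r$, which requires restricting to a further good event where $\Phi_r$ lies in a deterministic bounded set, itself of super-polynomially small complement.

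\textbf{Closing the loop.} On the intersection of the fluctuation and boundedness events, the three estimates combine to give $\dot{\mathcal{D}}(t) \leq C(t)\mathcal{D}(t) + c_0 N^{-1/d+\varepsilon}$ with $\mathcal{D}(0)=0$, and Gronwall yields $\mathcal{D}(t) \lesssim N^{-1/d+\varepsilon}$ on $[0,T)$ with the prescribed $e^{-CN^\sigma}$ failure probability, completing the proof of Theorem~\ref{thm:main}.
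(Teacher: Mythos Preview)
Your overall architecture is right---coupling to the auxiliary i.i.d.\ system, splitting the force, and closing via a Gronwall-type inequality---but the handling of the nonlinearity in the fluctuation step has a genuine gap.

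You propose to linearise the Poisson--Boltzmann equation and write $\nabla h$ as the convolution of $\mu_{{\bf Y}^N}-\rho_{f_r}$ against the Green's kernel of $-\Delta + V_r$. However, $V_r = \int_0^1 e^{\lambda \Phi_r[\mu_{{\bf Y}^N}]+(1-\lambda)\Phi_r[\rho_{f_r}]}\,d\lambda$ depends on \emph{all} of the $Y_j^N$ through $\Phi_r[\mu_{{\bf Y}^N}]$, so the summands $G_{V_r}(x,Y_i^N)$ in your empirical mean are \emph{not} independent and Bernstein does not apply. Restricting to a good event on which $V_r$ is uniformly bounded does not restore independence; it only controls the size of the (still random) kernel. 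A freezing argument, replacing $V_r$ by a deterministic proxy and estimating the discrepancy, leads back to the very fluctuation you are trying to bound.

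The paper sidesteps this entirely by proving a deterministic $L^\infty$ stability estimate for the Poisson--Boltzmann equation (Proposition~\ref{prop:PotentialStability}): for any $\rho_1,\rho_2\in L^\infty$,
\[
\|\nabla\Phi[\rho_1]-\nabla\Phi[\rho_2]\|_{L^\infty}\leq \exp\bigl(C_d(1+\max_i\|\rho_i\|_{L^\infty})\bigr)\,\|K\ast(\rho_1-\rho_2)\|_{L^\infty}.
\]
This reduces the nonlinear force difference to the \emph{linear} convolution $K_r\ast(\mu_{{\bf X}^N}-\rho_{f_r})$ with the \emph{deterministic} regularised Coulomb kernel, together with a bound on $\|\chi_r\ast\mu_{{\bf X}^N}\|_{L^\infty}$ to control the exponential prefactor. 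The uniform law of large numbers (Bernstein plus covering, Proposition~\ref{prop:LLN}) is then applied only to the fixed kernels $K_r$ and $\chi_r$, and the passage $\mu_{{\bf X}^N}\to\mu_{{\bf Y}^N}$ is carried out at this linear level via an $r$-local Lipschitz modulus (Lemma~\ref{lem:gApproxEmpirical}, Corollary~\ref{cor:LLN-Approx}). In other words, the three-way split you perform at the level of the nonlinear potential is instead performed at the level of $K_r\ast\cdot$ and $\chi_r\ast\cdot$, after the PDE estimate has stripped off the nonlinearity.

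A secondary point: you assert that the consistency piece is absorbed by $W^{1,\infty}$ regularity of $\nabla\Phi_r[\rho_{f_r}]$, but uniformly in $r$ only a \emph{log-Lipschitz} bound is available (Proposition~\ref{prop:potential}). The Iacobelli nonlinear weight is therefore not an optional refinement but essential: it converts the log-Lipschitz estimate into an integral inequality for $-|\log J|^{1/2}$ with an $O(1)$ integrand on the good event (Lemma~\ref{lem:J_integral}, Proposition~\ref{prop:Field}), whereas a naive Gronwall would produce a constant diverging like $|\log r|$ and destroy the rate.
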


\begin{remark}
As explained in \cite{LazaroviciPickl}, Theorem~\ref{thm:main} implies convergence in probability of the empirical measure $\mu_{({\bf X}^N, {\bf V}^N)}$ in Monge-Kantorovich-Wasserstein distances $W_p$ for $p \in [1, +\infty)$, under suitable moment conditions on the initial datum $f_0$. This follows from the $W_p$ concentration estimates of Fournier and Guillin \cite{FournierGuillin}.
\end{remark}

When combined with Theorem~\ref{thm:VPMEconv}, Theorem~\ref{thm:main} implies that the dynamics of $({\bf X}^N, {\bf V}^N)$ converge to those of the Vlasov-Poisson system for ions \eqref{eq:vpme} (with singular interaction) as $N$ tends to infinity.
For example, in three dimensions we may deduce the following corollary.

\begin{cor} \label{cor:3d} 
Let $d=3$. Suppose $f_0 \in L^1_+ \cap L^\infty (\TT^3 \times \R^3)$ and, for some $k_0 > 3$, $(1 + |v|^{k_0}) f_0 \in L^1 \cap L^\infty(\TT^3 \times \R^3)$.
Let $f$ denote the unique solution of \eqref{eq:vpme} with initial datum $f_0$ and $\rho_f \in L^\infty( [0, + \infty) ; L^\infty(\TT^3))$, and let $( \overline{Y}, \overline{W})$ denote the corresponding characteristic flow.
Let $r = r(N) \to 0$ as $N \to + \infty$ with, for some $\e \in ( 0, \frac{1}{3})$ and $b>0$, $r = r(N) \geq b N^{- \frac{1}{3} + \e}$.

For all integers $N \geq 1$, let $(x_i^{0, \, N}, v_i^{0, \, N})_{i=1}^N$ have distribution $f_0^{\otimes N}$ under $\PP$.
Let $({\bf X}^N,{\bf V}^N)$ be the solution of the particle system with cut-off interactions \eqref{eq:ODE-N}, with initial data $(x_i^{0, \, N}, v_i^{0, \, N})_{i=1}^N$ (see Definition~\ref{def:ParticleSystem}).
Let $(\overline{{\bf Y }}^N, \overline{{\bf W }}^N)$ be the mean-field dynamics induced by $f$, with the same initial data---i.e.,
\be
(\overline{Y_i}^N, \overline{W_i}^N) = \left ( \overline{Y} (x_i^{0, \, N}, v_i^{0, \, N}) , \, \overline{W}(x_i^{0, \, N}, v_i^{0, \, N}) \right ) \qquad \text{for all} \; i \in \{ 1, \ldots, N \} .
\ee

Then, $\PP$-almost surely, 
\be
\lim_{N \to 0} \|{ \bf X}^N - \overline{{\bf Y }}^N \|_{L^\infty([0,t])}  + \| {\bf V }^N - \overline{{\bf W}}^N\|_{L^\infty([0,t])} = 0 \quad \text{for all} \; t > 0.
\ee

\end{cor}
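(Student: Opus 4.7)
The argument is a routine synthesis of Theorem~\ref{thm:main} and Theorem~\ref{thm:VPMEconv}, combined with a Borel--Cantelli argument to upgrade convergence in probability to almost-sure convergence.

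First I would check that the hypotheses of Theorem~\ref{thm:VPMEconv} apply. In dimension $d=3$ the assumptions on $f_0$ coincide with \eqref{hyp:f0}, so by the discussion after Assumption~\ref{hyp:f}, Assumption~\ref{hyp:f} holds with $T = +\infty$ and some non-decreasing $D : [0,+\infty) \to [0,+\infty)$. Theorem~\ref{thm:VPMEconv} then gives existence and uniqueness of the bounded-density solution $f$, together with the quantitative flow convergence
\[
\eta(r,t) := \|(Y^r - \overline{Y}, W^r - \overline{W})\|_{L^\infty([0,t]\times \TT^d \times \R^d)} \longrightarrow 0 \qquad \text{as } r \to 0,
\]
for every $t > 0$.

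Next I would split the error by the triangle inequality. Since both $({\bf Y}^{N}, {\bf W}^{N})$ and $(\overline{{\bf Y}}^{N}, \overline{{\bf W}}^{N})$ are obtained by evaluating the respective characteristic flows (for $f_r$ and $f$) at the \emph{same} initial data $(x_i^{0,N}, v_i^{0,N})$, one has the pathwise (deterministic, $\omega$-uniform) bound
\[
\|{\bf Y}^{N} - \overline{{\bf Y}}^{N}\|_{L^\infty([0,t])} + \|{\bf W}^{N} - \overline{{\bf W}}^{N}\|_{L^\infty([0,t])} \leq 2\,\eta(r(N), t) \longrightarrow 0.
\]
Hence it suffices to show that $A_N(t) := \|{\bf X}^N - {\bf Y}^N\|_{L^\infty([0,t])} + \|{\bf V}^N - {\bf W}^N\|_{L^\infty([0,t])} \to 0$ almost surely.

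For this I would apply Theorem~\ref{thm:main} with some fixed $\sigma \in (0,d\e)$: for each $t > 0$ and $N \geq N_\ast(t)$,
\[
\PP\bigl(A_N(t) > c_0 N^{-1/d + \e}\bigr) \leq C e^{-C N^\sigma}.
\]
Since $\sum_N e^{-C N^\sigma} < \infty$, Borel--Cantelli gives a full-measure event $\Omega_t \subset \Omega$ on which $A_N(t) \leq c_0 N^{-1/d+\e}$ for all sufficiently large $N$, and in particular $A_N(t) \to 0$ on $\Omega_t$. Taking $\Omega_\infty := \bigcap_{n\in\N} \Omega_n$ (still of full measure), monotonicity of $t \mapsto \|\cdot\|_{L^\infty([0,t])}$ implies $A_N(t) \to 0$ on $\Omega_\infty$ for every real $t > 0$. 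Combined with the deterministic estimate $B_N(t) \leq 2\eta(r(N),t) \to 0$, the triangle inequality yields the claim.

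There is no serious obstacle: the work is really carried by the two theorems being combined. The only subtlety is the passage from an almost-sure statement for each fixed $t$ to one valid simultaneously for all $t > 0$, which is handled by the monotonicity-and-countable-intersection step above.
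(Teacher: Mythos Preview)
Your proposal is correct and follows exactly the route the paper indicates: the corollary is stated as an immediate consequence of combining Theorem~\ref{thm:main} with Theorem~\ref{thm:VPMEconv}, and your triangle-inequality split, Borel--Cantelli step, and monotonicity argument for handling all $t>0$ simultaneously fill in precisely the details the paper leaves implicit.
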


\subsection{Structure of the Paper}

The remainder of the article is structured as follows. 
Section~\ref{sec:potential} contains estimates for the electrostatic potential $\Phi$---in particular, a new quantitative stability estimate for the convergence of the electric field in $L^\infty$.
Section~\ref{sec:LLN} focuses on the $L^\infty$ law of large numbers for convolutions with empirical measures induced by a small perturbation of an $N$-tuple of independent and identically distributed random variables.
Then, in Section~\ref{sec:MFL} we combine the results of Sections~\ref{sec:potential} and \ref{sec:LLN} to
prove Theorem~\ref{thm:main}, using an approach based on the nonlinear kinetic `distances' introduced in \cite{Iacobelli}.
The article concludes with the proof of Theorem~\ref{thm:VPMEconv} in Section~\ref{sec:VPMEConv}.

\begin{paragraph}{Notation.}

In inequalities of the form $A \leq C B$, $C>0$ denotes a generic positive constant, independent of any relevant parameters, which may change from line to line without comment. Subscripts such as $C_\alpha$ are used to indicate cases in which $C_r$ depends on a parameter $\alpha$ (and no others) in a manner that we will not write explicitly.
For ease of reading, the symbol $\lesssim$ is used to abbreviate inequalities like $A \leq C B$ to $A \lesssim B$, where the implicit constant $C$ is generic as described. Similarly $\lesssim_\alpha$ indicates that the implicit constant $C_\alpha$ depends on the parameter $\alpha$.

We abbreviate the maximum and minimum functions using the symbols $\vee$ and $\wedge$, respectively. Thus $a \vee b = \max\{ a, b\}$ and $a \wedge b = \min\{ a , b \}$. The positive part of $a$ is denoted by $a_+ = a \vee 0$.

\end{paragraph}

\section{Analysis of the Potential} \label{sec:potential}

This section focuses on estimates for the electrostatic potentials in the ODE systems \eqref{eq:ODE-N} and \eqref{eq:ODE-aux}.
In either case, the potential is a solution of the Poisson-Boltzmann equation \eqref{eq:potential}, for some source $\rho$.
For the applications we have in mind, we expect to have $\rho \in L^\infty_+(\TT^d)$, either by Assumption~\ref{hyp:f} in the mean-field case, or in general for regularised systems where $\rho$ is of the form $\chi_r \ast \mu$ for a probability measure $\mu$.
We first summarise the following regularity and integrability estimates for $\Phi$; see \cite{GPIWP, GPIQN} for further details and proofs.

\begin{prop} \label{prop:potential}
Let $\rho \in L^\infty_+(\TT^d)$.
There exists a unique solution $\Phi \in (W^{1,2} \cap L^\infty)(\TT^d)$ of the equation
\be \label{eq:potential}
-  \Delta \Phi = \rho - e^\Phi .
\ee
$\Phi$ satisfies the following estimate, for any $p \in [1, +\infty]$:
\be \label{est:rhoLp}
\| e^\Phi \|_{L^p} \leq \| \rho  \|_{L^p} .
\ee
Moreover $\Phi \in W^{2,p}(\TT^d)$ for all $p \in (1, + \infty)$, and $\nabla \Phi$ is log-Lipschitz with the following bound: for all $x,y \in \TT^d$,
\be
|\nabla \Phi (x) - \nabla \Phi (y)| \leq C_d  \| \rho \|_{L^\infty} |x - y |_{\TT^d} \log |x - y|_{\TT^d}^{-1}  ,
\ee
where $C_d > 0$ is a uniform constant depending on the dimension $d$ only.
\end{prop}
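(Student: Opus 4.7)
The plan has four ingredients. First, I would obtain existence and uniqueness of $\Phi$ variationally. Second, I would derive the $L^p$ estimate \eqref{est:rhoLp} by testing \eqref{eq:potential} against $(e^\Phi)^{p-1}$. Third, I would bootstrap regularity by observing that the right-hand side of \eqref{eq:potential} lies in every $L^p$ and invoking Calder\'on--Zygmund. Finally, I would deduce the log-Lipschitz estimate from the resulting $L^\infty$ bound on $\Delta \Phi$ via a classical Newtonian potential argument.

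For existence and uniqueness, I would set up the strictly convex functional
\[
J(\Phi) = \int_{\TT^d} \left( \tfrac{1}{2} |\nabla \Phi|^2 + e^{\Phi} - \rho\, \Phi \right) \dd x
\]
on $H^1(\TT^d)$. The exponential term keeps the mean of $\Phi$ from drifting to $-\infty$ while the Dirichlet term controls oscillations by Poincar\'e, so $J$ is coercive; strict convexity gives a unique minimiser $\Phi \in H^1$, whose Euler--Lagrange equation is exactly \eqref{eq:potential}. Uniqueness can also be read off directly at the PDE level, since subtracting two solutions $\Phi_1, \Phi_2$ and testing against $\Phi_1 - \Phi_2$ gives $\| \nabla (\Phi_1 - \Phi_2) \|_{L^2}^2 + \int_{\TT^d} (e^{\Phi_1} - e^{\Phi_2})(\Phi_1 - \Phi_2) \dd x = 0$, with both terms nonnegative by monotonicity of the exponential.

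For \eqref{est:rhoLp}, testing \eqref{eq:potential} against $(e^\Phi)^{p-1}$ and integrating by parts yields a nonnegative gradient term which I drop, leaving
\[
\int_{\TT^d} (e^{\Phi})^p \dd x \leq \int_{\TT^d} \rho\, (e^{\Phi})^{p-1} \dd x \leq \|\rho\|_{L^p} \|e^{\Phi}\|_{L^p}^{p-1}
\]
by H\"older, giving the $L^p$ bound; passing $p \to \infty$ handles the $L^\infty$ case. With $e^\Phi \in L^\infty$ available, $\rho - e^\Phi \in L^p(\TT^d)$ for every $p \in (1,\infty)$, so Calder\'on--Zygmund theory on the torus yields $\Phi \in W^{2,p}$. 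In particular $\|\Delta \Phi\|_{L^\infty} \leq 2 \|\rho\|_{L^\infty}$, and an $L^\infty$ bound on the Laplacian is classically known to control the log-Lipschitz modulus of the gradient: writing $\nabla \Phi(x) - \nabla \Phi(y)$ as the convolution of $\rho - e^\Phi$ with the gradient of the torus Green's function for $-\Delta$ and splitting the integral according to whether the singularity of the kernel lies inside or outside the ball of radius $|x-y|_{\TT^d}$, the inner part contributes $O(|x-y|_{\TT^d})$ and the outer part produces the $r \log r^{-1}$ modulus.

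The step requiring the most care is the $L^p$ argument before an a priori $L^\infty$ bound is in hand, since $(e^\Phi)^{p-1}$ is not obviously an admissible test function. I would sidestep this by first running the computation with the truncated test function $(e^\Phi \wedge M)^{p-1}$ and sending $M \to \infty$, or equivalently by applying a De Giorgi / Moser-type argument to the minimiser of $J$ to secure the $L^\infty$ bound first. The remaining steps are standard elliptic theory, and the proposition is essentially a compilation of estimates established in detail in \cite{GPIWP, GPIQN}.
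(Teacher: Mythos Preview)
Your proposal is correct and matches the standard approach; indeed, the paper itself does not give a proof of this proposition but simply cites \cite{GPIWP, GPIQN}, exactly as you note in your final sentence. The variational existence argument, the $L^p$ estimate obtained by testing against $(e^\Phi)^{p-1}$, the Calder\'on--Zygmund bootstrap, and the kernel-splitting argument for the log-Lipschitz bound are precisely the ingredients used in those references, and your remark about truncating the test function before the $L^\infty$ bound is available is the right technical caveat.
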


The main object of this section is to prove an estimate controlling in $L^\infty(\TT^d)$ the difference between electric fields induced by different densities. Existing estimates of this kind for the ion model \cite{GPIWP, GPIMFQN, GPIQN} are of $L^2$ type and depend on the 2-Wasserstein distance between the densities.
In the present setting, we want to control the deviation between the two particle systems in the uniform distance $\| {\bf X }^N - { \bf Y }^N \|_{L^\infty} + \| {\bf V }^N - { \bf W }^N \|_{L^\infty}$, and therefore require $L^\infty$-type estimates for the electric field.

In order to state the new estimate (Proposition~\ref{prop:PotentialStability}), we first need to introduce the Green's function of the Laplacian on the $d$-dimensional flat torus $\TT^d$, which will be denoted by $G$. 
$G \in \mc{D}'(\TT^d)$ is a distribution such that
\be
- \Delta G = \delta_0 - 1 \qquad \text{on } \TT^d , \qquad \langle G, 1 \rangle = 0 .
\ee
We let $K : = - \nabla_x G$. $G$ and $K$ are $C^\infty$ functions on $\TT^d \setminus \{ 0 \}$, and the singularity at $x=0$ has the following behaviour: there exists a $C^\infty(\TT^d)$ function $G_0$ such that $G = G_0 + G_1$ and for all $x$ such that $|x|_{\TT^d} < \frac{1}{4}$
\be \label{def:SingKernel}
G_1(x) = \begin{cases}
- c_2 \log { |x|_{\TT^d} } & d = 2, \\
 \frac{c_d}{|x|_{\TT^d}^{d-2}} & d \geq 3,
 \end{cases}
\ee
for some positive constants $c_d > 0$ (see \cite{Titchmarsh}). In particular $K_1 = - \nabla_x G_1$ satisfies
\be \label{def:K1}
K_1 =  - c_d ' \frac{x}{|x|_{\TT^d}^{d}}, \qquad \text{for all} \; \, |x|_{\TT^d} < \frac{1}{4} .
\ee

\begin{prop} \label{prop:PotentialStability}
For $i=1,2$, let
$\Phi_i$ satisfy \eqref{eq:potential}
with the density $\rho_i \in L^\infty_+(\TT^d)$.
Then
\be
\|\nabla \Phi_1 - \nabla \Phi_2 \|_{L^\infty(\TT^d)} \leq \exp \left (C_d (1 + \max_i \| \rho_i \|_{L^\infty(\TT^d)}) \right ) \| K \ast \rho_1 - K \ast \rho_2 \|_{L^\infty(\TT^d)} ,
\ee
for some constant $C_d > 0$ depending on the dimension $d$ only.

\end{prop}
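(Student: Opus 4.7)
The plan is to reformulate the difference $U := \Phi_1 - \Phi_2$ as the solution of a linear elliptic equation with a bounded, strictly positive zero-order coefficient, and then exploit cancellations on the torus. Subtracting the two Poisson--Boltzmann equations shows that $U$ satisfies
\[ -\Delta U + V U = R, \qquad R := \rho_1 - \rho_2, \qquad V(x) := \int_0^1 e^{(1-s) \Phi_2(x) + s \Phi_1(x)} \dd s, \]
so that $V U = e^{\Phi_1} - e^{\Phi_2}$ pointwise. Two properties of $V$ will be used: $V > 0$ everywhere, and $V \leq M := \max_i \|\rho_i\|_{L^\infty(\TT^d)}$, the latter following from $\|e^{\Phi_i}\|_{L^\infty} \leq \|\rho_i\|_{L^\infty}$ in Proposition~\ref{prop:potential}. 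Crucially, no uniform lower bound on $V$ will enter the estimate. Since the applications of interest have $\int \rho_1 = \int \rho_2$, I assume $R$ has zero mean on $\TT^d$.

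Next, introduce $u_0 := G \ast R$, which has zero mean and satisfies $-\Delta u_0 = R$. Differentiating gives $\nabla u_0 = -K \ast R$, hence $\|\nabla u_0\|_{L^\infty} = \|K \ast R\|_{L^\infty} =: A$. Because $u_0$ is continuous and has zero mean on $\TT^d$, it vanishes at some point, so integrating the gradient along a minimising geodesic yields $\|u_0\|_{L^\infty} \leq C_d A$. Now set $h := U - u_0$; it solves $-\Delta h + V h = -V u_0$. At a maximum $x_*$ of $h$, $-\Delta h(x_*) \geq 0$, and the equation gives $V(x_*) \bigl(h(x_*) + u_0(x_*)\bigr) \leq 0$; since $V(x_*) > 0$, this yields $h(x_*) \leq -u_0(x_*) \leq \|u_0\|_{L^\infty}$. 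The symmetric argument at the minimum gives $\|h\|_{L^\infty} \leq \|u_0\|_{L^\infty} \leq C_d A$, with no dependence on $\inf V$.

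To bound $\nabla h$, I invert the Laplacian through the Green's function. The equation $-\Delta h = -V(u_0+h)$ has right-hand side with zero mean (since $\int V U = \int(e^{\Phi_1} - e^{\Phi_2}) = \int \rho_1 - \int \rho_2 = 0$), so convolving with $G$ and taking the gradient gives $\nabla h = K \ast [V(u_0 + h)]$. Since $|K(x)| \lesssim |x|_{\TT^d}^{1-d}$ is locally integrable in dimension $d$, $K \in L^1(\TT^d)$, and Young's convolution inequality yields
\[ \|\nabla h\|_{L^\infty} \leq \|K\|_{L^1} \, M \, \bigl(\|u_0\|_{L^\infty} + \|h\|_{L^\infty}\bigr) \leq C_d M A . \]
Combining with $\nabla U = \nabla u_0 + \nabla h$ yields $\|\nabla U\|_{L^\infty} \leq (1 + C_d M) A$, which is dominated by $\exp(C_d(1+M)) A$ and proves the proposition. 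The main obstacle I anticipate is the $L^\infty$ control of $h$: a naive approach via $(-\Delta + V)^{-1}$ would demand a uniform lower bound on $V$, which in turn would require a pointwise lower bound on $\Phi_i$ not among the estimates in Proposition~\ref{prop:potential}. The splitting $U = u_0 + h$ circumvents this by reducing the question to a maximum-principle comparison of $h$ with $-u_0$, using only the pointwise positivity of $V$.
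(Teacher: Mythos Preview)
Your argument is correct and genuinely different from the paper's. The paper proceeds by first proving an $L^p$ stability bound $\|e^{\Phi_1}-e^{\Phi_2}\|_{L^p}\le C_p D_1 D_0^{-1/2}\|K\ast(\rho_1-\rho_2)\|_{L^p}$ for $p>d$ (via testing against $(\Phi_1-\Phi_2)|\Phi_1-\Phi_2|^{p-2}$), and then separately establishes a pointwise lower bound $e^{\Phi}\ge \exp(-C_d\|\rho\|_{L^\infty})$ to control $D_0$; it is this lower bound that produces the exponential prefactor $\exp(C_d(1+M))$. Your route---splitting $U=u_0+h$ with $u_0=G\ast R$, then comparing $h$ with $-u_0$ via the maximum principle using only pointwise positivity of $V$---sidesteps the need for any quantitative lower bound on $e^{\Phi_i}$ and yields the sharper linear prefactor $(1+C_dM)$, which of course implies the stated estimate. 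The paper's energy method is perhaps more flexible for other $L^p$ settings, but your approach is more elementary and strictly improves the constant.

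One small technical point: since $\rho_i$ is only assumed $L^\infty$, the potentials $\Phi_i$ (and hence $h$) lie in $W^{2,p}(\TT^d)$ for all $p<\infty$ but need not be $C^2$, so the pointwise inequality $-\Delta h(x_*)\ge 0$ at a maximum is not immediately classical. This is easily repaired---either invoke the maximum principle for strong solutions, or test the equation for $h$ against $(h-\|u_0\|_{L^\infty})_+$ and use that $V(h+u_0)>0$ on the support of this function to conclude $(h-\|u_0\|_{L^\infty})_+\equiv 0$. It would be worth adding a sentence to that effect.
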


To prove Proposition~\ref{prop:PotentialStability}, we first prove two preliminary lemmas. The first is an $L^p$ stability estimate for the nonlinearity $e^\Phi$.

\begin{lemma} \label{lem:NLStability}
Let $\Phi_1$ and $\Phi_2$ satisfy the assumptions of Proposition~\ref{prop:PotentialStability}.
Suppose that there exist constants $D_0, \, D_1 > 0$ such that
\be
D_0 \leq e^{\Phi_1}, \; e^{\Phi_2} \leq D_1 \qquad \text{for all } x \in \TT^d .
\ee
Then, for any $p \in (d, + \infty)$,
\be
\| e^{\Phi_1} - e^{\Phi_2} \|_{L^p} \leq C_p \, D_1 D_0^{-1/2} \|  K \ast (\rho_1 - \rho_2) \|_{L^p} ,
\ee
where $C_p > 0$ is a constant depending on $p$ only.

\end{lemma}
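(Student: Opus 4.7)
My plan is to combine the pointwise mean value identity $e^{\Phi_1} - e^{\Phi_2} = h\,(\Phi_1 - \Phi_2)$ with an $L^p$ energy estimate for the linearised Poisson-Boltzmann equation satisfied by $u := \Phi_1 - \Phi_2$. Here $h(x) := \int_0^1 e^{s \Phi_1(x) + (1-s) \Phi_2(x)} \dd s$ satisfies $D_0 \leq h \leq D_1$ by hypothesis, and the trivial pointwise bound $|e^{\Phi_1} - e^{\Phi_2}| \leq D_1 |u|$ reduces the lemma to proving $\|u\|_{L^p} \lesssim_p D_0^{-1/2} \|K \ast (\rho_1 - \rho_2)\|_{L^p}$.

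To set up the energy estimate, I would introduce the mean-zero auxiliary potential $U := G \ast (\rho_1 - \rho_2)$, which is well defined because both densities have unit mass. Then $-\Delta U = \rho_1 - \rho_2$ and $\nabla U = -K \ast (\rho_1 - \rho_2)$, so $\|\nabla U\|_{L^p}$ is exactly the right-hand side of the lemma. Subtracting the Poisson-Boltzmann equations for $\Phi_1$ and $\Phi_2$ and inserting the mean value identity gives
\begin{equation}
-\Delta u + h u = -\Delta U \quad \text{on } \bt^d.
\end{equation}
By Proposition~\ref{prop:potential} we have $u \in W^{2,p}(\bt^d)$, which is ample regularity for what follows.

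The core calculation is to test this equation against $u |u|^{p-2}$ and integrate by parts on both sides, obtaining
\begin{equation}
(p-1) \int_{\bt^d} |u|^{p-2} |\nabla u|^2 \dd x + \int_{\bt^d} h |u|^p \dd x = (p-1) \int_{\bt^d} |u|^{p-2} \nabla U \cdot \nabla u \dd x.
\end{equation}
Pointwise Young's inequality on the right allows me to absorb half of the gradient term on the left; discarding the remaining gradient term, using $h \geq D_0$, and applying Hölder with conjugate exponents $p/(p-2)$ and $p/2$ yields
\begin{equation}
D_0 \|u\|_{L^p}^p \leq \tfrac{p-1}{2} \|u\|_{L^p}^{p-2} \|\nabla U\|_{L^p}^2.
\end{equation}
Dividing through gives $\|u\|_{L^p} \leq \sqrt{(p-1)/(2 D_0)} \, \|K \ast (\rho_1 - \rho_2)\|_{L^p}$, and the lemma follows upon multiplying by $D_1$ with $C_p = \sqrt{(p-1)/2}$.

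The main subtlety lies in the choice of test function: the sharp $D_0^{-1/2}$ power relies specifically on retaining $\int h |u|^p$ as a coercive term and using a quadratic Young inequality to trade $|\nabla u|^2$ against $|\nabla U|^2$ before invoking Hölder. Decomposing $u = U + w$ and running the energy method on $-\Delta w + h w = -h U$ instead would pair $h U$ against $|w|^{p-1}$ via Hölder and produce a strictly worse power of $D_0$.
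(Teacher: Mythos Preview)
Your proof is correct and follows essentially the same approach as the paper: both test the difference equation against $u|u|^{p-2}$, use the lower bound $D_0$ on the linearisation $h$ (equivalently, the paper's inequality $(e^{\Phi_1}-e^{\Phi_2})(\Phi_1-\Phi_2)\geq D_0|\Phi_1-\Phi_2|^2$), and combine H\"older and Young to extract the $D_0^{-1/2}$ power. The only differences are cosmetic---you introduce $U=G\ast(\rho_1-\rho_2)$ explicitly and apply Young pointwise before H\"older, whereas the paper keeps $K\ast(\rho_1-\rho_2)$ throughout and applies H\"older first---and your version even yields the explicit constant $C_p=\sqrt{(p-1)/2}$.
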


\begin{remark}
Note that, by Proposition~\ref{prop:potential}, we can always choose $D_1 = \max_i \| \rho_i \|_{L^\infty}$.
\end{remark}

\begin{proof}
The difference $\Phi_1 - \Phi_2$ satisfies the equation
\be \label{eq:PBDiff-prelim}
-  \Delta (\Phi_1 - \Phi_2) = \rho_1 - \rho_2 - (e^{\Phi_1} - e^{\Phi_2}) .
\ee
Since $\int_{\TT^d} (\rho_1 - \rho_2) \dd x = 0$, we may rewrite this in the form
\be \label{eq:PBDiff}
-  \Delta (\Phi_1 - \Phi_2) = \div \left [ K \ast (\rho_1 - \rho_2) \right ] - (e^{\Phi_1} - e^{\Phi_2}) .
\ee

We multiply the equation \eqref{eq:PBDiff} by the function $(\Phi_1 - \Phi_2)|\Phi_1 - \Phi_2|^{p-2}$ and integrate to obtain, after an integration by parts, the equality
\begin{multline}
\int_{\TT^d}  (e^{\Phi_1} - e^{\Phi_2}) (\Phi_1 - \Phi_2) |\Phi_1 - \Phi_2|^{p-2} \dd x +  \int_{\TT^d} \nabla (\Phi_1 - \Phi_2) \cdot  \nabla \left [ (\Phi_1 - \Phi_2) |\Phi_1 - \Phi_2|^{p-2} \right ] \dd x \\
= - \int_{\TT^d} K \ast (\rho_1 - \rho_2) \cdot  \nabla \left [ (\Phi_1 - \Phi_2) |\Phi_1 - \Phi_2|^{p-2} \right ]  \dd x .
\end{multline}

Next, note that
\begin{align}
 (e^{\Phi_1} - e^{\Phi_2}) (\Phi_1 - \Phi_2) |\Phi_1 - \Phi_2|^{p-2} & \geq |\Phi_1 - \Phi_2|^{p} \, \int_0^1 e^{\lambda \Phi_1 + (1- \lambda) \Phi_2} \dd \lambda \\
 & \geq D_0 |\Phi_1 - \Phi_2|^{p} .
\end{align}
Using the relations
\begin{align}
& \nabla \left [  (\Phi_1 - \Phi_2)|\Phi_1 - \Phi_2|^{p-2} \right ] = (p-1) |\Phi_1 - \Phi_2|^{p-2} \nabla (\Phi_1 - \Phi_2), \\
& |\Phi_1 - \Phi_2 |^{p/2-1}  \nabla (\Phi_1 - \Phi_2) = \frac{2}{p} \nabla \left ( (\Phi_1 - \Phi_2) |\Phi_1 - \Phi_2 |^{p/2 -1} \right ),
\end{align}
we observe that we have obtained the inequality
\begin{multline}
 D_0 \int_{\TT^d}|\Phi_1 - \Phi_2|^{p} \dd x + \frac{4(p-1)}{p^2} \int_{\TT^d} \left | \nabla \left ( (\Phi_1 - \Phi_2) |\Phi_1 - \Phi_2 |^{p/2 -1} \right ) \right |^2 \dd x \\
\leq \frac{2(p-1)}{p} \int_{\TT^d} \left | K \ast (\rho_1 - \rho_2) \right | \, |\Phi_1 - \Phi_2|^{p/2-1} \, \left | \nabla \left ( (\Phi_1 - \Phi_2) |\Phi_1 - \Phi_2 |^{p/2 -1} \right ) \right |\dd x .
\end{multline}

Then, by H\"older's inequality,
\begin{multline}
D_0 \| \Phi_1 - \Phi_2 \|_{L^p}^p+ \frac{4(p-1)}{p^2} \left \| \nabla \left ( (\Phi_1 - \Phi_2) |\Phi_1 - \Phi_2 |^{p/2 -1} \right )  \right \|_{L^2}^2 \\
\leq \frac{2(p-1)}{p} \|  K \ast (\rho_1 - \rho_2) \|_{L^p} \left \| \Phi_1 - \Phi_2 \right \|_{L^p}^{p/2 - 1} \left \| \nabla \left ( (\Phi_1 - \Phi_2) |\Phi_1 - \Phi_2 |^{p/2 -1} \right )  \right \|_{L^2} .
\end{multline}
By Young's inequality for products (with exponents $2$ and $2$),
\begin{multline}
D_0 \| \Phi_1 - \Phi_2 \|_{L^p}^p+ \frac{4(p-1)}{p^2} \left \| \nabla \left ( (\Phi_1 - \Phi_2) |\Phi_1 - \Phi_2 |^{p/2 -1} \right )  \right \|_{L^2}^2 \\
\leq (p-1) \|  K \ast (\rho_1 - \rho_2) \|_{L^p}^2 \left \| \Phi_1 - \Phi_2 \right \|_{L^p}^{p - 2} + \frac{2(p-1)}{p^2} \left \| \nabla \left ( (\Phi_1 - \Phi_2) |\Phi_1 - \Phi_2 |^{p/2 -1} \right )  \right \|_{L^2}^2
\end{multline}
and hence
\be
\| \Phi_1 - \Phi_2 \|_{L^p} \leq C_p D_0^{- 1/p} \|  K \ast (\rho_1 - \rho_2) \|_{L^p}^{2/p} \left \| \Phi_1 - \Phi_2 \right \|_{L^p}^{1 - 2/p} .
\ee
By another application of Young's inequality for products (exponents $\frac{p}{2}$ and $\frac{p}{p-2}$), we obtain 
\be
\| \Phi_1 - \Phi_2 \|_{L^p} \leq C_p D_0^{- 1/2} \|  K \ast (\rho_1 - \rho_2) \|_{L^p} .
\ee

Finally, since
\be
| e^{\Phi_1} - e^{\Phi_2} | \leq | \Phi_1 - \Phi_2 | \sup_{\lambda \in (0,1), \; x \in \TT^d} e^{\lambda \Phi_1(x) + (1- \lambda) \Phi_2(x)} \leq D_1  | \Phi_1 - \Phi_2 |,
\ee
we have
\be
\| e^{\Phi_1} - e^{\Phi_2} \|_{L^p} \leq C_p D_1 D_0^{-1/2} \|  K \ast (\rho_1 - \rho_2) \|_{L^p} .
\ee

\end{proof}

Next, we will prove a lower bound $D_0$ on the nonlinearity $e^\Phi$. 

\begin{lemma}
\label{lem:LB}
Let $\rho \in L^p_+(\TT^d)$ for $p > \frac{d}{2}$. 
Let $\Phi \in W^{1,2} \cap L^\infty$ satisfy the Poisson-Boltzmann equation \eqref{eq:potential} with density $\rho$.
Then there exists $C_{p,d} >0$, depending on $p$ and $d$ only, such that
\be
e^\Phi \geq \exp{(- C_{p,d} \| \rho  \|_{L^{p}})} > 0.
\ee
\end{lemma}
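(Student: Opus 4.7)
The plan is to establish the equivalent pointwise lower bound $\Phi \geq -C_{p,d} \|\rho\|_{L^p}$ by comparing $\Phi$ with the Newtonian potential of $\rho$ and invoking a maximum-principle argument. In all intended applications of the lemma, $\rho$ is a probability density (or a mollification thereof), so I shall work under the normalization $\int_{\TT^d} \rho = 1$; integrating \eqref{eq:potential} over $\TT^d$ then gives $\int_{\TT^d} e^\Phi = 1$. I introduce the auxiliary function $\eta := \Phi - G * \rho$, where $G$ is the periodic Green's function defined above. Since $-\Delta (G * \rho) = \rho - 1$, one has
\be
- \Delta \eta = 1 - e^\Phi \quad \text{on } \TT^d.
\ee

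The key steps are then as follows. First, Proposition~\ref{prop:potential} gives $\Phi \in W^{2,q}(\TT^d) \cap L^\infty(\TT^d)$ for every $q \in (1,\infty)$, hence $\Phi$ is continuous and the source $1 - e^\Phi$ above is a continuous function; by elliptic regularity $\eta$ is classically $C^2$. Second, let $x_0 \in \TT^d$ be a point where $\eta$ attains its global minimum; the second-derivative test gives $\Delta \eta(x_0) \geq 0$, i.e.\ $1 - e^{\Phi(x_0)} \leq 0$, so $\Phi(x_0) \geq 0$ and hence $\eta(x_0) = \Phi(x_0) - (G * \rho)(x_0) \geq - \|G * \rho\|_{L^\infty}$. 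Third, $\eta(x) \geq \eta(x_0)$ for every $x$ gives
\be
\Phi(x) = \eta(x) + (G * \rho)(x) \geq -2 \|G * \rho\|_{L^\infty}, \qquad x \in \TT^d.
\ee
Finally, the decomposition $G = G_0 + G_1$ from \eqref{def:SingKernel}, with $G_1(x) \sim |x|_{\TT^d}^{-(d-2)}$ (or $-\log |x|_{\TT^d}$ if $d=2$) and $G_0 \in C^\infty(\TT^d)$, places $G \in L^{p'}(\TT^d)$ precisely under the hypothesis $p > d/2$ (equivalently $p' < d/(d-2)$ in dimensions $d \geq 3$). Young's convolution inequality then yields $\|G * \rho\|_{L^\infty} \leq \|G\|_{L^{p'}} \|\rho\|_{L^p} \leq C_{p,d} \|\rho\|_{L^p}$, whence $\Phi \geq -2 C_{p,d} \|\rho\|_{L^p}$ and so $e^\Phi \geq \exp(-C_{p,d}' \|\rho\|_{L^p})$.

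The main subtlety is justifying the pointwise second-derivative test at $x_0$: a priori $\eta$ is only weakly twice differentiable, so $\Delta \eta$ has meaning only in an $L^q$ sense. I expect this to be dispatched by the regularity bootstrap noted above: continuity of $\Phi$ gives continuity of $e^\Phi$, hence $\Delta \eta \in C^0(\TT^d)$; combined with $\eta \in W^{2,q}$ for all $q < \infty$, this places $\eta$ in $C^2(\TT^d)$ (in fact $C^{2,\alpha}$ via Schauder once one notes $e^\Phi \in C^{1,\alpha}$), so the elementary minimum argument applies rigorously. Alternatively, since $-\Delta \eta$ is strictly positive on any open set where $\Phi < 0$, the strong minimum principle for $W^{2,q}$ supersolutions directly precludes an interior minimum of $\eta$ there, giving the same conclusion without invoking pointwise derivatives.
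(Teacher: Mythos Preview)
Your argument is correct but follows a genuinely different route from the paper. The paper writes $\Phi - \langle\Phi\rangle = G \ast (\rho - e^\Phi)$, bounds the oscillation $\|\Phi - \langle\Phi\rangle\|_{L^\infty} \leq C_{p,d}\|\rho\|_{L^p}$ via $G \in L^{p'}$ together with the estimate $\|e^\Phi\|_{L^p} \leq \|\rho\|_{L^p}$ from Proposition~\ref{prop:potential}, and then controls the mean $\langle\Phi\rangle$ from below by integrating the pointwise bound $e^\Phi \leq e^{\langle\Phi\rangle + C_{p,d}\|\rho\|_{L^p}}$ and using $\int e^\Phi = 1$. You instead subtract only $G \ast \rho$ and apply a maximum-principle argument to $\eta = \Phi - G \ast \rho$: at the minimum of $\eta$ the sign condition $-\Delta\eta = 1 - e^\Phi \leq 0$ forces $\Phi(x_0) \geq 0$, which bounds $\min \eta$ directly by $-\|G \ast \rho\|_{L^\infty}$. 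Both proofs ultimately rest on $G \in L^{p'}$ for $p > d/2$. Your approach is a single pointwise step and does not invoke the $L^p$ bound on $e^\Phi$; the paper's approach is purely integral and needs no pointwise second-derivative information, so it works at the level of $\Phi \in L^\infty$ without any regularity bootstrap.

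One caution on the bootstrap you do use: the implication ``$\Delta\eta \in C^0$ and $\eta \in W^{2,q}$ for all $q$ $\Rightarrow$ $\eta \in C^2$'' is false in general (this is precisely the failure of the $C^0$ Calder\'on--Zygmund estimate), so the justification must rest on your parenthetical Schauder remark. That remark is correct: $p > d/2$ gives $\Phi \in W^{2,p} \hookrightarrow C^{0,\alpha}$, hence $1 - e^\Phi \in C^{0,\alpha}$, and Schauder then yields $\eta \in C^{2,\alpha}$, which is enough for the pointwise second-derivative test at $x_0$. Note also that under the lemma's minimal hypothesis $\rho \in L^p$ for a single $p > d/2$, Proposition~\ref{prop:potential} strictly speaking gives only $\Phi \in W^{2,p}$, not $W^{2,q}$ for all $q$; this is harmless for you since $p > d/2$ already suffices for the H\"older embedding.
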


\begin{proof}
We may use the elliptic equation \eqref{eq:potential} to control the deviation of $\Phi$ from its average value
\be
\langle \Phi \rangle : = \frac{1}{|\TT^d|} \int_{\TT^d} \Phi \dd x =  \int_{\TT^d} \Phi \dd x ;
\ee
the second inequality holds because we have normalised $\TT^d$ to have unit volume.
In particular
\be
\Phi - \langle \Phi \rangle = G \ast ( \rho - e^\Phi ).
\ee
We now estimate this convolution in $L^\infty(\TT^d)$.

Using the decomposition $G = G_0 + G_1$, and by the form \eqref{def:SingKernel} of $G_1$, we have $G \in L^q(\TT^d)$ for all $q \in [1, \frac{d}{d-2})$ ($q \in [1, +\infty)$ for $d=2$). Since $p'$, the Hölder conjugate exponent of $p$, lies in this range, we may estimate
\be
\| \Phi - \langle \Phi \rangle \|_{L^\infty} \leq \| G \|_{L^{p'}} \|  \rho - e^\Phi \|_{L^p} .
\ee
Hence, by Proposition~\ref{prop:potential},
\be
\| \Phi - \langle \Phi \rangle \|_{L^\infty} \leq C_{p,d} \|  \rho \|_{L^p} ,
\ee
where $C_{p,d}>0$ is a positive constant depending on $p$ and $d$ only.
We rewrite this inequality as
\be
\langle \Phi \rangle - C_{p,d} \| \rho  \|_{L^{p}} \leq \Phi \leq \langle \Phi \rangle + C_{p,d} \| \rho  \|_{L^{p}} .
\ee
It remains only to control the constant $\langle \Phi \rangle$ from below.

To do this, we use the fact that the nonlinearity $e^\Phi$ has unit integral.
For all $x \in \TT^d$,
\be
e^{\Phi(x)}  \leq \exp{(\langle \Phi \rangle + C_{p,d} \| \rho  \|_{L^{p}} )} .
\ee
Integrating this inequality over all $x \in \TT^d$, we find that
\be
1 = \int_{\TT^d} e^\Phi   \dd x \leq  \exp{(\langle \Phi \rangle + C_{p,d} \| \rho  \|_{L^{p}} )} .
\ee
Taking logarithms, we obtain
\be
\langle \Phi \rangle \geq - C_{p,d} \| \rho  \|_{L^{p}} .
\ee
Hence
\be
 \Phi  \geq - C_{p,d} \| \rho  \|_{L^{p}} \qquad \text{for all } x \in \TT^d.
\ee
Taking exponentials completes the proof. 

\end{proof}

We are now able to complete the proof of Proposition~\ref{prop:PotentialStability}.

\begin{proof}[Proof of Proposition~\ref{prop:PotentialStability}]
We have 
\be
\nabla \Phi_1 - \nabla \Phi_2 = K \ast (\rho_1 - \rho_2) - K \ast (e^{\Phi_1} - e^{\Phi_2}) ,
\ee
and hence
\be
\| \nabla \Phi_1 - \nabla \Phi_2 \|_{L^\infty} = \| K \ast (\rho_1 - \rho_2) \|_{L^\infty}  + \| K \ast (e^{\Phi_1} - e^{\Phi_2}) \|_{L^\infty} .
\ee
To estimate the second term, choose any $p \in (d, + \infty)$. Then, since by \eqref{def:SingKernel} $K \in L^{p'}$,
\be
\| K \ast (e^{\Phi_1} - e^{\Phi_2}) \|_{L^\infty} \leq \| K \|_{L^{p'}} \| e^{\Phi_1} - e^{\Phi_2} \|_{L^p} .
\ee
Then, by Lemma~\ref{lem:NLStability},
\be
\| K \ast (e^{\Phi_1} - e^{\Phi_2}) \|_{L^\infty} \leq C_p \, D_1 D_0^{-1/2} \|  K \ast (\rho_1 - \rho_2) \|_{L^p} ,
\ee
where $D_0, D_1$ are constants such that, for $i=1,2$,
\be
D_0 \leq e^{\Phi_i} \leq D_1 .
\ee

By Proposition~\ref{prop:potential}, we may take $D_1 = \max_i \| \rho \|_{L^\infty}$. By Lemma~\ref{lem:LB}, we may take $D_0$ of the form $D_0 = e^{-C_d \max_i \| \rho_i \|_{L^\infty}}$.
Hence
\begin{align}
\| K \ast (e^{\Phi_1} - e^{\Phi_2}) \|_{L^\infty} & \leq C_p \, e^{ C_d \max_i \| \rho_i \|_{L^\infty}} (\max_i \| \rho_i \|_{L^\infty}) e^{ C_d \max_i \| \rho_i \|_{L^\infty}} \|  K \ast (\rho_1 - \rho_2) \|_{L^p} \\
& \leq  e^{ C_{d,p} (1 +  \max_i \| \rho_i \|_{L^\infty})} \|  K \ast (\rho_1 - \rho_2) \|_{L^p} \\
& \leq e^{ C_{d,p} (1 +  \max_i \| \rho_i \|_{L^\infty})} \|  K \ast (\rho_1 - \rho_2) \|_{L^\infty}
\end{align}
as required.

\end{proof}

\section{Uniform Approximation Estimates for Empirical Means} \label{sec:LLN}

One of the key techniques in \cite{BoersPickl, LazaroviciPickl} is to show convergence of the microscopic force field using a law of large numbers argument. 
In the models considered in these works, the interaction force is of convolution form $- \nabla \Psi \ast \mu$: the force between $N$ particles with positions $\{ Z_i \}_{i=1}^N$ is of the form $- \frac{1}{N} \sum_{j\neq i}^N \nabla \Psi (Z_i - Z_j)$ at particle $i$. If $\{ Z_i \}_{i=1}^N$ are independent with identical distribution $\rho_Z$, then by the law of large numbers this converges almost surely as $N$ tends to infinity to $g \ast \rho_Z(Z_i)$. Repeating this argument for all $i = 1, \ldots, N$ shows that the interaction force experienced by all the particles converges in the limit.

The model for ions differs in that the interaction potential depends \emph{nonlinearly} on the particle density, and in particular is not of the form $\frac{1}{N} \sum_{j\neq i}^N g(Z_i - Z_j)$ for any function $g$.
By Proposition~\ref{prop:PotentialStability}, it would suffice to show the convergence of $K \ast \chi_r \ast \mu_{{\bf Z}}$ and uniform bounds on $\chi_r \ast \mu_{{\bf Z}}$. However, we require this convergence and boundedness not only at a finite number of points $\{ Z_i \}_{i=1}^N$ but for the whole function in $L^\infty(\TT^d)$ sense.

The goal of this section is to prove a uniform law of large numbers of this type, with quantitative bounds for the rate of the convergence in probability. 
Laws of large numbers and concentration inequalities for Banach-space valued random variables have been widely studied in the probability and statistics literature---see for example \cite{LedouxTalagrand, BoucheronLugosiMassart}.
Here we will prove the result we require directly, using local Lipschitz estimates to obtain similar asymptotics as the cut-off is removed to those found in \cite{LazaroviciPickl}.

\subsection{Local Lipschitz Property}

Another important component of the methods of \cite{BoersPickl, LazaroviciPickl} is the use of local Lipschitz estimates. For example, as observed in \cite[Lemma 6.3]{LazaroviciPickl}, for the Coulomb kernel on $\R^d$ truncated at length scale $r$ (denoted here by $\tilde K_r$) the following estimate holds: there exists a non-negative function $\tilde L_r$ with $\| \tilde L_r \|_{L^1} \sim |\log r|$ such that
\be \label{ex:locLip}
| \tilde K_r (x) - \tilde K_r(y) | \leq \tilde L_r (y) |x-y| \quad \text{for all} \; x,y \in \R^d \; \text{such that} \; |x-y| < r. 
\ee
The key point in this inequality is that the function $\tilde L_r$ is evaluated only at the point $y$. 
In order to prove convergence in the case of nonlinear coupling, we will need a version of property \eqref{ex:locLip} for several functions. 
For convenience, we will use the following terminology.

\begin{definition} \label{def:localLip}
Let $g: \TT^d \to \R^m$ be a continuous function. $h : \TT^d \to [0, + \infty)$ is an \emph{$r$-local Lipschitz modulus} for $g$ if, for all $x,y \in \TT^d$ such that $|x-y|_{\TT^d} < r$,
\be \label{eq:ModOfCont}
\left | g(x) - g(y) \right | \leq h(y) |x-y|_{\TT^d} .
\ee
\end{definition}
\begin{remark}
Notice that any $r$-local Lipschitz modulus is an $r'$-local Lipschitz modulus for any $0 < r' < r$.
\end{remark}

Once again, we emphasise that in this definition the modulus $h$ is evaluated at the point $y$ only, rather than at an intermediate point between $x$ and $y$, as a standard mean value argument would give for any $C^1$ function.
Nevertheless, any $g \in C^1$ does have an $r$-local Lipschitz modulus, for example the function $h$ defined by
\be \label{C1-Lip}
h(y) : = \sup_{z \in \overline{B_r}(y)} |\nabla g(z)| .
\ee
For functions defined on $\RR^d$, this is an immediate consequence of the mean value theorem. 
We can extend the result to functions defined on the torus $\TT^d$, since it is always possible to interpolate between $x$ and $y$ along a path of length $|x-y|_{\TT^d}$ wholly contained within $B_r(y)$. 

In \cite{BoersPickl, LazaroviciPickl}, these local Lipschitz estimates were used to show \emph{pointwise} convergence of functions of the form $g \ast \mu_{{\bf Z}}$. In the case of the ion model we will need to show that convergence in fact holds in the stronger sense of $L^\infty(\TT^d)$. To achieve this,
we will additionally use a `second-order' estimate: that is, an $r$-local Lipschitz estimate for the modulus $h$ defined by \eqref{C1-Lip}. 

\begin{lemma} \label{lem:l} 
Assume that $r  \in (0, \frac{1}{4})$.
Let $g \in C^2(\TT^d)$. Define $h : \TT^d \to [0, + \infty)$ by 
\be \label{def:h}
h(y) : = \sup_{z \in \overline{B_r}(y)} |\nabla g(z)|  \qquad \text{ for all } y \in \TT^d.
\ee
Then $h$ is Lipschitz, and the function $l : \TT^d \to [0, + \infty)$ defined by
\be \label{def:l}
l(y) : =  \sup_{z \in \overline{B_{2r}}(y)} |\nabla^2 g(z)|  \qquad \text{ for all } y \in \TT^d.
\ee
is an $r$-local Lipschitz modulus for $h$.
\end{lemma}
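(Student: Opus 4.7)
The plan is to identify $\overline{B_r}(y_2)$ with $\overline{B_r}(y_1)$ via the shift map $z \mapsto z + (y_2 - y_1)$, which for $|y_1 - y_2|_{\TT^d}$ small is a well-defined isometry on the torus. Under this identification, $h(y_1)$ and $h(y_2)$ are suprema of $|\nabla g|$ evaluated at pairs of points at Euclidean distance $|y_1 - y_2|_{\TT^d}$, and the differences can be controlled by a mean value theorem for $\nabla g$ applied to the straight segment between paired points. The whole argument rests on ensuring that the segment lies in a set where $|\nabla^2 g|$ is bounded by the quantity we want (either $\|\nabla^2 g\|_{L^\infty}$ for the global Lipschitz bound, or $l(y_2)$ for the local modulus). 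The assumption $r < \tfrac{1}{4}$ is what makes both the shift and the segment well-defined: since $\overline{B_{2r}}(y_2)$ has diameter $\leq 4r < 1$, it lifts isometrically to a ball in $\R^d$, where the Euclidean mean value theorem applies directly.

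First I would prove the Lipschitz property. Compactness of $\TT^d$ gives $M := \|\nabla^2 g\|_{L^\infty(\TT^d)} < + \infty$. For $y_1, y_2$ with $|y_1 - y_2|_{\TT^d} < r$, and any $z_1 \in \overline{B_r}(y_1)$, set $z_2 := z_1 + (y_2 - y_1)$. Then $z_2 \in \overline{B_r}(y_2)$, the segment $[z_1, z_2]$ lifts to $\R^d$ and has length $|y_1 - y_2|_{\TT^d}$, and the mean value theorem applied to $\nabla g$ on this segment gives
\begin{equation*}
|\nabla g(z_1) - \nabla g(z_2)| \leq M |y_1 - y_2|_{\TT^d}.
\end{equation*}
Thus $|\nabla g(z_1)| \leq h(y_2) + M |y_1 - y_2|_{\TT^d}$, and taking the supremum over $z_1$ yields $h(y_1) \leq h(y_2) + M |y_1 - y_2|_{\TT^d}$. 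Chaining along a geodesic on the torus extends this bound to arbitrary $y_1, y_2$, giving the global $M$-Lipschitz property.

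Next I would prove the $r$-local Lipschitz modulus property. Fix $y_1, y_2$ with $|y_1 - y_2|_{\TT^d} < r$ and use the same shift construction. The key observation is that now the entire segment $[z_1, z_2]$ lies in $\overline{B_{2r}}(y_2)$: indeed, $|z_1 - y_2|_{\TT^d} \leq |z_1 - y_1|_{\TT^d} + |y_1 - y_2|_{\TT^d} \leq 2r$ and $|z_2 - y_2|_{\TT^d} \leq r$, so the lifted convex combination stays in the Euclidean ball of radius $2r$ about (the lift of) $y_2$. Replacing $M$ by $\sup_{w \in \overline{B_{2r}}(y_2)} |\nabla^2 g(w)| = l(y_2)$ in the mean value estimate gives
\begin{equation*}
|\nabla g(z_1) - \nabla g(z_2)| \leq l(y_2) |y_1 - y_2|_{\TT^d},
\end{equation*}
whence $h(y_1) \leq h(y_2) + l(y_2) |y_1 - y_2|_{\TT^d}$. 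For the matching lower bound I would repeat the argument starting from $z_2 \in \overline{B_r}(y_2)$ and defining $z_1 := z_2 + (y_1 - y_2) \in \overline{B_r}(y_1)$; once again both $z_1, z_2$ lie in $\overline{B_{2r}}(y_2)$, so the $l(y_2)$-estimate applies and yields $h(y_2) \leq h(y_1) + l(y_2) |y_1 - y_2|_{\TT^d}$.

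The main (and only) obstacle is a bookkeeping point: keeping the asymmetric modulus $l(y_2)$ on the right-hand side in \emph{both} directions of the inequality. This is why the argument for the reverse inequality is set up by also shifting \emph{out of} $y_2$, so that the crucial containment in $\overline{B_{2r}}(y_2)$ survives and one never needs to invoke $l(y_1)$. Everything else is a direct consequence of the compactness of $\TT^d$, the isometric action of translations, and the smallness condition $r < \tfrac{1}{4}$ that permits working in a single Euclidean lift.
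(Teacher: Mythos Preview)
Your proof is correct and is in fact cleaner than the paper's argument. The paper proceeds by picking a maximizer $z_y^\ast \in \overline{B_r}(y)$ with $h(y)=|\nabla g(z_y^\ast)|$, then splitting into cases according to whether $z_y^\ast$ already lies in $\overline{B_r}(x)\cap\overline{B_r}(y)$; when it does not, they construct an auxiliary point $u_y^\ast$ on the segment from $x$ to $z_y^\ast$ at distance exactly $r$ from $x$, and apply the mean value theorem along $[u_y^\ast,z_y^\ast]\subset\overline{B_r}(y)$. The reverse inequality is handled by swapping $x$ and $y$ and noting $\overline{B_r}(x)\subset\overline{B_{2r}}(y)$ so that the modulus can still be evaluated at $y$. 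Your translation trick $z\mapsto z+(y_2-y_1)$ bypasses both the maximizer and the case split: it pairs every $z_1\in\overline{B_r}(y_1)$ with a partner $z_2\in\overline{B_r}(y_2)$ at distance $|y_1-y_2|_{\TT^d}$, and the segment automatically sits in $\overline{B_{2r}}(y_2)$. The paper's route is more pointwise-geometric and gives a marginally sharper intermediate bound in one direction (using only $\sup_{\overline{B_r}(y)}|\nabla^2 g|$ before enlarging to $\overline{B_{2r}}(y)$), but this gain is discarded anyway in the reverse direction, so the final statements coincide. Your argument also makes the global Lipschitz claim immediate by chaining, which the paper leaves implicit.
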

\begin{remark}
The argument can be iterated to obtain that, for any $k \in \mathbb{N}$, the function
\be
\sup_{z \in \overline{B_{kr}}(y)} |\nabla^k g(z)| 
\ee 
is Lipschitz with $r$-local modulus
\be
\sup_{z \in \overline{B_{(k+1)r}}(y)} |\nabla^{k+1} g(y + z)| .
\ee
\end{remark}

\begin{remark}
A similar result holds for functions defined on $\R^d$, without the restriction $r < \frac{1}{4}$.
\end{remark}

We will need the following technical lemma, which gives a convexity property for sufficiently small balls in $\TT^d$ (see Figure~\ref{fig:toruspaths}).

\begin{figure}[h]
	\centering
	
	\begin{subfigure}[b]{0.3 \textwidth}
		\centering
		\includegraphics[width = 0.87 \textwidth]{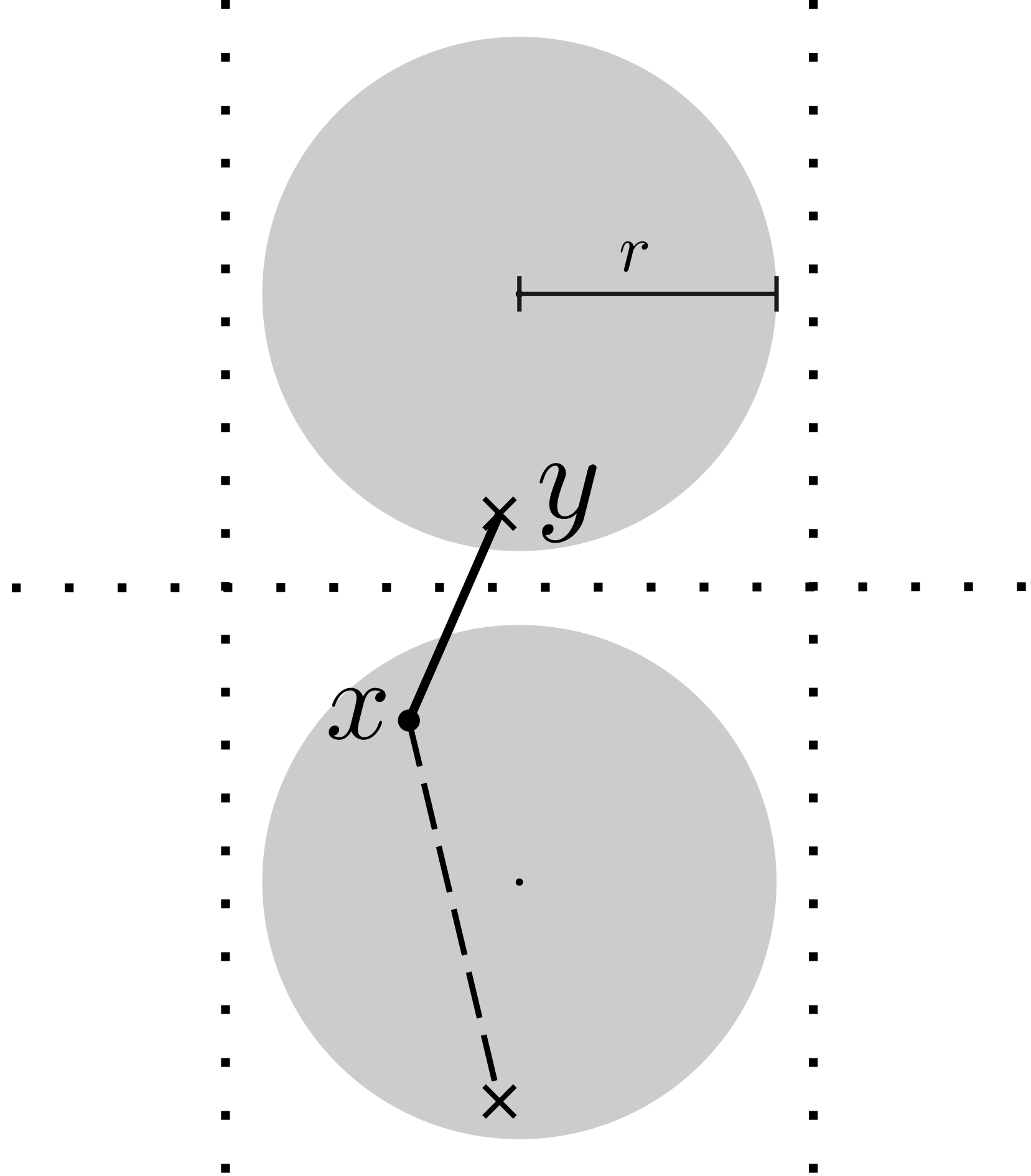}
		\caption{$r > \frac{1}{4}$}
		\label{fig:largeball}
	\end{subfigure}
	\hspace{4em}
	\begin{subfigure}[b]{0.3 \textwidth}
		\centering
		\includegraphics[width = \textwidth]{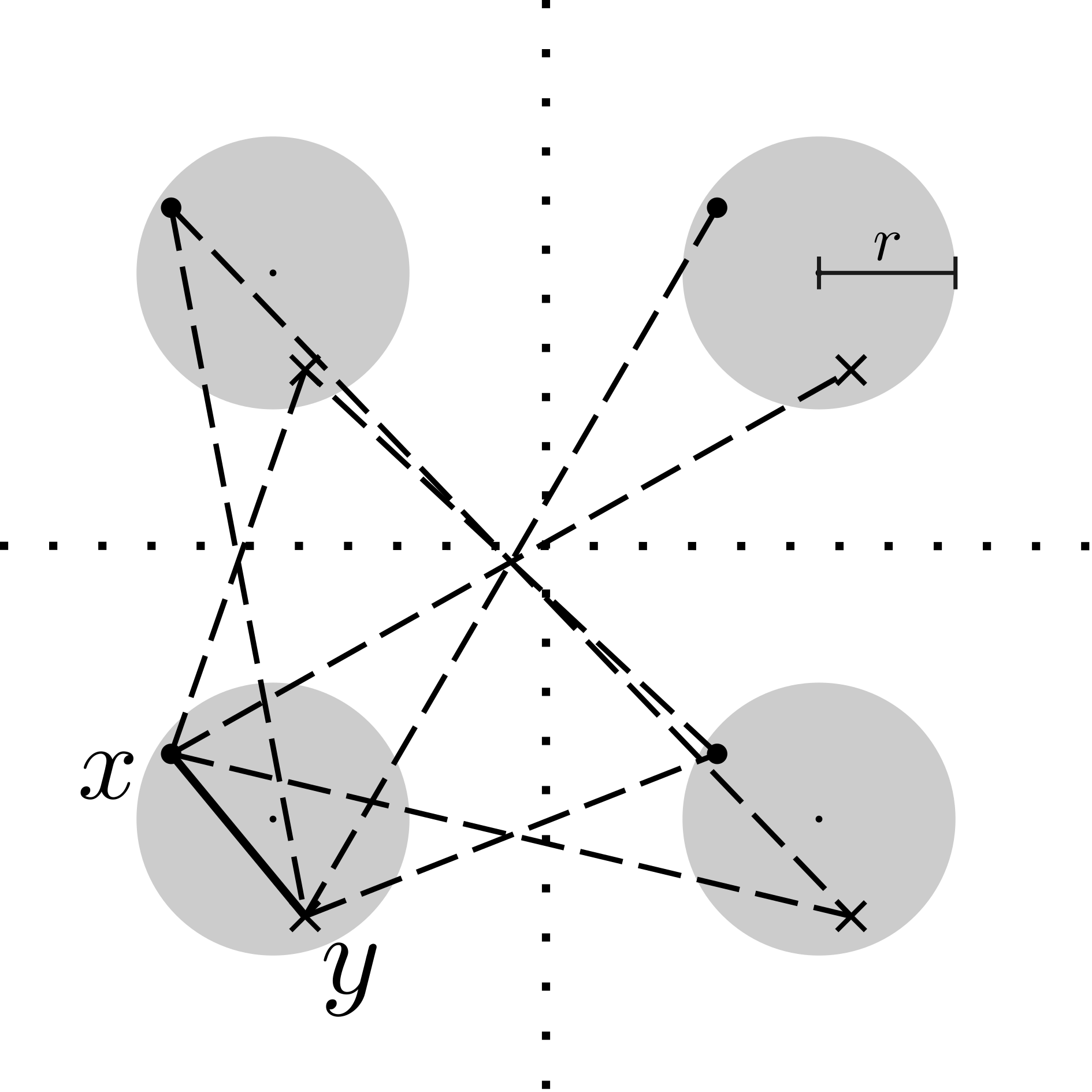}
		\caption{$r < \frac{1}{4}$}
		\label{fig:smallball}
	\end{subfigure}
	
	\caption{Straight line paths in $\TT^2$ between two points $x$ and $y$ contained in $\overline{B_r}(0)$. \subref{fig:largeball} When $r$ is large, the shortest path (solid line), which has length $|x-y|_{\TT^2}$, may pass outside of $\overline{B_r}(0)$. \subref{fig:smallball} When $r$ is small, the shortest path is contained in $\overline{B_r}(0)$. }
	\label{fig:toruspaths}
\end{figure}

\begin{lemma} \label{lem:torus_paths}
Let $x, y \in \overline{B_r}(0)$ for some $r < \frac{1}{4}$. Let $\tilde y \in \left [ - \frac{1}{2}, \frac{1}{2} \right )^d$ be an $\RR^d$-representative of $y$, and let $\tilde x \in \RR^d$ be a representative of $x$ such that $|x-y|_{\TT^d} = | \tilde x - \tilde y|$. Then $|(1-\lambda) \tilde y + \lambda \tilde x|_{\TT^d} \leq r$ for all $\lambda \in [0,1]$.
\end{lemma}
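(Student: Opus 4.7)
\medskip

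My plan is to prove that, under the smallness hypothesis $r<1/4$, the chosen representative $\tilde x$ in fact coincides with the canonical fundamental-domain representative of $x$, so that both $\tilde x$ and $\tilde y$ lie in the ball $\overline{B_r}(0;\RR^d)\subset[-1/4,1/4]^d$. Once this is established, the Euclidean segment stays inside the Euclidean ball of radius $r$ by convexity, and then the torus distance to $0$ is controlled by the Euclidean norm.

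First I would set up notation: let $\tilde x_0\in[-1/2,1/2)^d$ denote the fundamental-domain representative of $x$, so that $|\tilde x_0|=|x|_{\TT^d}\le r$, and note that all other representatives of $x$ in $\RR^d$ are of the form $\tilde x_0+k$ with $k\in\ZZ^d\setminus\{0\}$. By hypothesis $|\tilde y|=|y|_{\TT^d}\le r$ as well, and so
\begin{equation}
|\tilde x_0-\tilde y|\ \le\ |\tilde x_0|+|\tilde y|\ \le\ 2r\ <\ \tfrac{1}{2}.
\end{equation}

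The next step, which is the heart of the lemma, is to show that the minimiser of $k\mapsto |\tilde x_0+k-\tilde y|$ is $k=0$, i.e.\ $\tilde x=\tilde x_0$. For any $k\in\ZZ^d$ with $|k|\ge 1$, the reverse triangle inequality gives
\begin{equation}
|\tilde x_0+k-\tilde y|\ \ge\ |k|-|\tilde x_0-\tilde y|\ >\ 1-\tfrac{1}{2}\ =\ \tfrac{1}{2}\ >\ |\tilde x_0-\tilde y|,
\end{equation}
so $k=0$ is the unique minimiser. Therefore $\tilde x=\tilde x_0$ and in particular $|\tilde x|\le r$. This is where the hypothesis $r<1/4$ is used crucially: if $r$ were too large, the straight-line path between representatives in $[-1/2,1/2)^d$ could cross the boundary of the fundamental domain and actually exit the torus ball, as in Figure~\ref{fig:largeball}.

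Finally, for any $\lambda\in[0,1]$, convexity of the Euclidean norm yields
\begin{equation}
\bigl|(1-\lambda)\tilde y+\lambda\tilde x\bigr|\ \le\ (1-\lambda)|\tilde y|+\lambda|\tilde x|\ \le\ r.
\end{equation}
Since for any $z\in\RR^d$ the torus distance satisfies $|z|_{\TT^d}=\inf_{k\in\ZZ^d}|z+k|\le|z|$, we conclude
\begin{equation}
\bigl|(1-\lambda)\tilde y+\lambda\tilde x\bigr|_{\TT^d}\ \le\ \bigl|(1-\lambda)\tilde y+\lambda\tilde x\bigr|\ \le\ r,
\end{equation}
which is the desired inequality. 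The only mildly delicate point in the whole argument is the uniqueness-of-closest-representative step, and it is clean once the bound $2r<1/2$ is exploited.
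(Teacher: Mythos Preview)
Your proof is correct and follows essentially the same strategy as the paper: both arguments establish that the chosen representative $\tilde x$ actually lies in the Euclidean ball $\overline{B_r}(0;\R^d)$, then invoke convexity of that ball and the trivial bound $|z|_{\TT^d}\le |z|$. The only cosmetic difference is in how the key step is justified: the paper bounds $|\tilde x|$ directly via the triangle inequality to place $\tilde x$ in the fundamental domain, whereas you argue that the minimising lattice shift must be $k=0$ and hence $\tilde x$ coincides with the canonical representative $\tilde x_0$; your version is arguably a bit more explicit about why the closest representative is the canonical one.
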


\begin{proof}
For all $z \in \RR^d$ such that $|z| < \frac{1}{2}$, $|z|_{\TT^d} = |z|$. Hence, if $x, y \in \overline{B_r}(0)$ for $r < \frac{1}{4}$, then $|\tilde x| \leq |\tilde y| + |\tilde x - \tilde y| \leq 2r < \frac{1}{2}$, which implies that $\tilde x \in \left [ - \frac{1}{2}, \frac{1}{2} \right )^d$ and $|x|_{\TT^d} = |\tilde x| \leq r$.
Then, since the ball $\overline{B_r}(0 ; \RR^d)$ in $\RR^d$ is convex, $|(1-\lambda) \tilde y + \lambda \tilde x | \leq r$ for all $\lambda \in [0,1]$. Since $r < \frac{1}{2}$, $|(1-\lambda) \tilde y + \lambda \tilde x |_{\TT^d} = |(1-\lambda) \tilde y + \lambda \tilde x |$, and the proof is complete.

\end{proof}

\begin{proof}[Proof of Lemma~\ref{lem:l}]
Let $x, y \in \TT^d$ with $|x-y|_{\TT^d} < r$.
Observe that both $h(x)$ and $h(y)$ are bounded from below by the supremum of $|\nabla g|$ over the intersection of the balls $\overline{B_r}(x)$ and $\overline{B_r}(y)$:
\be \label{est:hLB}
h(x), \;  h(y) \geq \sup_{u \in \overline{B_r}(y) \cap \overline{B_r}(x)} | \nabla g(u) | .
\ee

Since $\nabla g$ is a continuous function, there exists $z^\ast_y \in \overline{B_r}(y)$ such that $h(y) = | \nabla g(z^\ast_y) |$. If $z^\ast_y$ can be chosen within $\overline{B_r}(x) \cap \overline{B_r}(y)$, then 
\be \label{est:h_yx_shared}
h(y) = \sup_{u \in \overline{B_r}(y) \cap \overline{B_r}(x)} | \nabla g(u) | \leq h(x).
\ee

\begin{figure}[h]
  \centering
  \includegraphics[width=0.35\textwidth]{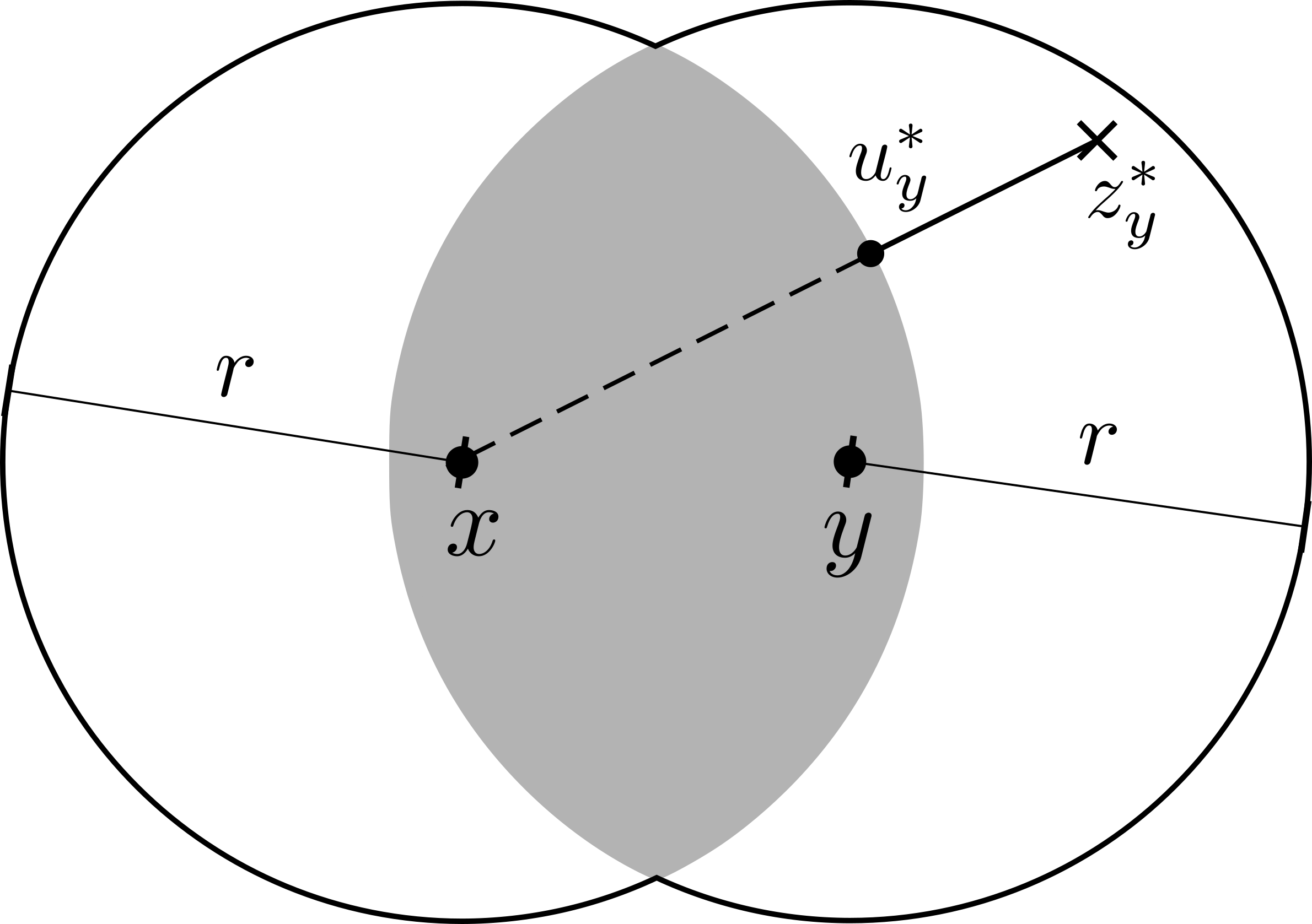}
  \caption{The union $\overline{B_r}(x) \cup \overline{B_r}(y)$. If an optimal $z^\ast_y$ lies in $\overline{B_r}(x) \cap \overline{B_r}(y)$ (shaded region), then $h(y) \leq h(x)$. Otherwise, we compare $|\nabla g(z^\ast_y)|$ with $|\nabla g(u^\ast_y)|$, by using a mean value theorem argument along the solid line segment connecting $u^\ast_y$ and $z^\ast_y$, whose length is no greater than $|x-y|_{\TT^d}$.} \label{fig:points}
\end{figure}

Otherwise, $z^\ast_y \in \overline{B_r}(y) \setminus \overline{B_r}(x)$.
In this case, we will compare $| \nabla g(z^\ast_y) |$ with $| \nabla g(u) |$ for some $u \in \overline{B_r}(x) \cap \overline{B_r}(y)$.
We wish to choose $u^\ast_y$ lying on the shortest straight line between $x$ and $z^\ast_y$ such that $| u^\ast_y - x |_{\TT^d} = r$ (see Figure~\ref{fig:points}).
This will ensure that $| z^\ast_y - u^\ast_y |_{\TT^d}$ can be bounded by $|x-y|_{\TT^d}$.

First, we define $u^\ast_y$. Since $x, z^\ast_y \in \overline{B_r}(y)$, we may identify $x, y, z^\ast_y$ with their representatives in $\R^d$ such that $|x-y| \leq r < \frac{1}{4}$ and $|z^\ast_y-y| \leq r < \frac{1}{4}$.
Then, by Lemma~\ref{lem:torus_paths} we have $| \lambda z^\ast_y + (1-\lambda) x - y |_{\TT^d} \leq r$ for all $\lambda \in [0,1]$---that is, this line is contained in $\overline{B_r}(y; \TT^d)$.
Since $| z^\ast_y - x |_{\TT^d} > r$, choosing $\lambda^\ast_y = \frac{r}{| z^\ast_y - x |_{\TT^d}} \in (0,1)$ gives a point $u^\ast_y : = x + \lambda^\ast_y ( z^\ast_y - x)$ satisfying $u^\ast_y \in \overline{B_r}(x) \cap \overline{B_r}(y)$ and $| u^\ast_y - x |_{\TT^d} = r$ as required.

Then, we estimate $| z^\ast_y - u^\ast_y |_{\TT^d}$. Using the definition of $u^\ast_y$,
\be 
| z^\ast_y - u^\ast_y |_{\TT^d} = (1 - \lambda^\ast_y ) | z^\ast_y - x |_{\TT^d} =  | z^\ast_y - x |_{\TT^d} - | u^\ast_y - x |_{\TT^d} .
\ee
By the triangle inequality, since $ | z^\ast_y - y |_{\TT^d} \leq r$ and $| u^\ast_y - x |_{\TT^d} = r$,
\be \label{est:dist}
| z^\ast_y - u^\ast_y |_{\TT^d} \leq | x - y |_{\TT^d} + | z^\ast_y - y |_{\TT^d} - | u^\ast_y - x |_{\TT^d} \leq | x - y |_{\TT^d}.
\ee

Now we compare $| \nabla g(z^\ast_y) |$ with $| \nabla g(u^\ast_y) |$.
By the mean value theorem,
\begin{align} \label{est:MVT}
| \nabla g(z^\ast_y) - \nabla g(u^\ast_y)| &\leq | z^\ast_y - u^\ast_y |_{\TT^d} \sup_{\lambda \in [0,1]} |\nabla^2 g(\lambda z^\ast_y + (1-\lambda) u^\ast_y)| \\
& \leq | z^\ast_y - u^\ast_y |_{\TT^d} \sup_{z \in \overline{B_r}(y)} |\nabla^2 g(z)| , \label{SwitchBreak}
\end{align}
since for all $\lambda \in [0,1]$,
$\lambda z^\ast_y + (1-\lambda) u^\ast_y = \lambda ' z^\ast_y + (1-\lambda ') x$ for some $\lambda ' \in [0,1]$ so that $\lambda z^\ast_y + (1-\lambda) u^\ast_y \in \overline{B_r}(y)$.
Hence, by \eqref{est:dist} and the definition \eqref{def:l} of $l$,
\be
| \nabla g(z^\ast_y) - \nabla g(u^\ast_y) | \leq l(y) | x - y |_{\TT^d}.
\ee
Then, by \eqref{est:hLB}, since $u^\ast_y \in \overline{B_r}(x) \cap \overline{B_r}(y)$,
\begin{align}
h(y) = | \nabla g(z^\ast_y) | & \leq  | \nabla g(u^\ast_y) | + l(y) | x - y |_{\TT^d} \\
& \leq \sup_{u \in \overline{B_r}(y) \cap \overline{B_r}(x)} | \nabla g(u) | + l(y) | x - y |_{\TT^d} \\
& \leq h(x) + l(y) | x - y |_{\TT^d} . \label{est:h_yx_sep}
\end{align}
From \eqref{est:h_yx_shared} and \eqref{est:h_yx_sep}, we deduce that in either case
\be \label{est:h_yx}
h(y) \leq h(x) + l(y) | x - y |_{\TT^d} .
\ee

We repeat the argument, exchanging the roles of $x$ and $y$, as far as \eqref{SwitchBreak}, to obtain that either $h(x) \leq h(y)$, or $z^\ast_x \in \overline{B_r}(x) \setminus \overline{B_r}(y)$ and
\be \label{est:MVTx}
| \nabla g(z^\ast_x) - \nabla g(u^\ast_x)| \leq | x - y |_{\TT^d} \sup_{z \in \overline{B_r}(x)} |\nabla^2 g(z)| ,
\ee
where $u^\ast_x : = y + r \frac{z^\ast_x - y}{| z^\ast_x - y |_{\TT^d}} \in  \overline{B_r}(x) \cap \overline{B_r}(y)$.
Since $| x - y |_{\TT^d} < r$, we have $\overline{B_r}(x) \subset \overline{B_{2r}}(y)$. Hence
\be \label{est:x_eta}
| \nabla g(z^\ast_x) - \nabla g(u^\ast_x)| \leq l(y) | x - y |_{\TT^d}  .
\ee
Since $u^\ast_x \in \overline{B_r}(x) \cap \overline{B_r}(y)$, we deduce from \eqref{est:hLB} that
\be \label{est:h_xy}
h(x) \leq h(y) +  l(y) | x - y |_{\TT^d} .
\ee

Taking estimates \eqref{est:h_yx}-\eqref{est:h_xy} completes the proof that $l$ is an $r$-local Lipschitz estimate for $h$.

\end{proof}

For the derivation of the ionic Vlasov-Poisson system, the cases of interest will be $g = \chi_r$ and $g = K \ast \chi_r$. We now apply the above results in order to identify suitable functions $h$ and $l$ for these cases.

\begin{definition} \label{def:moduli}
 Let $r \in (0, \frac{1}{4})$. Let $\chi_r$ be defined as in Definition~\ref{def:chi1} for some non-negative, radially symmetric $\chi \in C^\infty(\R^d ; [0 +\infty) )$ with compact support contained in $\overline{B_1}(0)$ and unit mass $\| \chi \|_{L^1} = 1$. Then
\begin{enumerate}[(i)]
\item $K_r : \TT^d \to \R^d$ denotes the function $K_r : = K \ast \chi_r$.

$L_r : \TT^d \to [0, + \infty)$ denotes the function
\be
L_r(y) : = \sup_{z \in B_r(y)} | \nabla K_r( z) | \qquad \text{ for all } y \in \TT^d .
\ee
Then $L_r$ is an $r$-local Lipschitz modulus for $K_r$.

$Q_r : \TT^d \to [0, + \infty)$ denotes the function
\be
Q_r(y) : = \sup_{z \in B_{2r}(y)} | \nabla^2 K_r( z) | \qquad \text{ for all } y \in \TT^d .
\ee
Then $Q_r$ is an $r$-local Lipschitz modulus for $L_r$.
\item
$\psi_r : \TT^d \to [0, + \infty)$ denotes the function
\be
\psi_r(y) : = \sup_{z \in B_r(y)} | \nabla \chi_r( z) | \qquad \text{ for all } y \in \TT^d .
\ee
Then $\psi_r$ is an $r$-local Lipschitz modulus for $\chi_r$. 

$\eta_r : \TT^d \to [0, + \infty)$ denotes the function
\be \label{def:eta}
\eta_r(y) : = \sup_{z \in B_{2r}(y)} |\nabla^2 \chi_r( z)| \qquad \text{ for all } y \in \TT^d .
\ee
Then $\eta_r$ is an $r$-local Lipschitz modulus for $\psi_r$, by Lemma~\ref{lem:l}.
\end{enumerate}

\end{definition}

We collect integrability estimates for these functions. These will be used later to bound the rates of convergence in probability.

\begin{lemma} \label{lem:Fn_bounds}
Let $\chi_r$, $\psi_r$, $\eta_r$, $K_r$, $L_r$ and $Q_r$ be defined as in Definition~\ref{def:moduli} for a fixed $\chi$ and any $r \in (0, \frac{1}{4})$.

\begin{enumerate}[(i)]

\item \label{item:chi_ests} The functions $\chi_r$, $\psi_r$, and $\eta_r$ satisfy the pointwise bounds
\be \label{est:chi_ptwise}
\chi_r(y) \lesssim_\chi r^{-d} \one_{B_{r}(0)}(y) , \qquad \psi_r(y) \lesssim_\chi r^{-d-1} \one_{B_{2r}(0)}(y) , \qquad \eta_r(y) \lesssim_\chi r^{-d-2} \one_{B_{3r}(0)}(y) ,
\ee
the $L^1$ bounds
\be \label{est:chi_L1}
\| \chi_r \|_{L^1} \leq 1 , \qquad \| \psi_r \|_{L^1} \lesssim_{d, \chi}  r^{-1}  , \qquad \| \eta_r \|_{L^1} \lesssim_{d, \chi}  r^{-2} ,
\ee
and the $L^2$ bounds
\be \label{est:chi_L2}
\| \chi_r \|_{L^2} \lesssim_{d, \chi} r^{-d/2} , \qquad \| \psi_r \|_{L^2} \lesssim_{d, \chi}  r^{-d/2 -1}  , \qquad \| \eta_r \|_{L^2} \lesssim_{d, \chi}  r^{-d/2 -2} .
\ee

\item \label{item:K_ests} The functions $K_r$, $L_r$, and $Q_r$ satisfy the following pointwise bounds: for $y \in \TT^d$,
\be \label{est:K_ptwise}
| K_r(y) | \lesssim_{d, \chi} ( |y|_{\TT^d} \vee 2r)^{1-d} , \qquad L_r(y) \lesssim_{d, \chi} ( |y|_{\TT^d} \vee 3r)^{-d} , \qquad Q_r(y)  \lesssim_{d, \chi} ( |y|_{\TT^d} \vee 4r)^{-d - 1} ,
\ee
the $L^1$ bounds
\be \label{est:K_L1}
\| K_r \|_{L^1(\TT^d)} \lesssim_{d, \chi} 1 , \qquad \| L_r \|_{L^1(\TT^d)} \lesssim_{d, \chi} |\log r| , \qquad \| Q_r \|_{L^1(\TT^d)} \lesssim_{d, \chi} r^{-1} ,
\ee
and the $L^2$ bounds
\be \label{est:K_L2}
\| K_r \|_{L^2(\TT^d)} \lesssim_{d, \chi} r^{1-d/2} , \qquad \| L_r \|_{L^2(\TT^d)} \lesssim_{d, \chi} r^{-d/2} , \qquad \| Q_r \|_{L^2(\TT^d)} \lesssim_{d, \chi} r^{-1 -d/2 } .
\ee
\end{enumerate}

\end{lemma}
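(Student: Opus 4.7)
This is a computational lemma, so the plan is simply to establish pointwise estimates on each of the six functions and then integrate them to obtain the $L^1$ and $L^2$ bounds. Parts (i) and (ii) are treated separately, since the $\chi_r$ family follows immediately from scaling while the $K_r$ family requires splitting into near-field and far-field regimes around the singularity of $K$.

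For part (i), the scaling identity $\chi_r(x)=r^{-d}\chi(x/r)$ together with the smoothness and compact support of $\chi$ in $\overline{B_1}(0)$ give $|\nabla^k \chi_r(z)| \lesssim r^{-d-k}\one_{B_r(0)}(z)$ for $k=0,1,2$. Taking the supremum over $\overline{B_r}(y)$, respectively $\overline{B_{2r}}(y)$, then yields the pointwise bounds for $\psi_r$ and $\eta_r$, with the support indicator enlarged to $B_{2r}(0)$ and $B_{3r}(0)$ respectively (since, for instance, $\psi_r(y) \neq 0$ forces $B_r(y) \cap B_r(0) \neq \emptyset$). The $L^1$ and $L^2$ estimates are then immediate, since $\|C\one_{B_{\alpha r}(0)}\|_{L^p(\TT^d)} \lesssim_d C\, r^{d/p}$.

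For part (ii), I identify $K_r := K \ast \chi_r$, which is the form in which the regularised interaction enters the particle system \eqref{eq:ODE-N} via Definition~\ref{def:ParticleSystem}. Using the decomposition $K = K_0 + K_1$ with $K_0 \in C^\infty(\TT^d)$ and $|K_1(x)| \sim |x|_{\TT^d}^{1-d}$ near zero from \eqref{def:SingKernel}, I argue separately in two regimes. In the far field $|y|_{\TT^d} \gtrsim r$, the support of $\chi_r(\cdot - y)$ lies strictly away from the singularity of $K$ at $y$, so $K_r(y)$ and its derivatives inherit the pointwise bounds of $K$: $|\nabla^k K_r(y)| \lesssim |y|_{\TT^d}^{1-d-k}$ for $k=0,1,2$. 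In the near field $|y|_{\TT^d} \lesssim r$, I instead move derivatives onto the mollifier, using $\nabla^k K_r = K \ast \nabla^k \chi_r$, and combine the bound $|\nabla^k \chi_r| \lesssim r^{-d-k} \one_{B_r(0)}$ with the local integrability estimate $\int_{B_\rho(0)}|K|\,dx \lesssim \rho$ (valid for $\rho < 1/4$, since $|K| \sim |x|^{1-d}$) to obtain $|\nabla^k K_r(z)| \lesssim r^{1-d-k}$ uniformly on $\{|z|_{\TT^d} \lesssim r\}$. Taking suprema over the specified balls around $y$ then gives the claimed pointwise bounds $|K_r(y)|\lesssim (|y|_{\TT^d} \vee 2r)^{1-d}$, $L_r(y) \lesssim (|y|_{\TT^d} \vee 3r)^{-d}$, $Q_r(y) \lesssim (|y|_{\TT^d} \vee 4r)^{-d-1}$.

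With the pointwise bounds in hand, the $L^p$ estimates in (ii) reduce to radial integrals of the form $\int_0^{1/2}(\rho \vee cr)^{-\alpha}\rho^{d-1}\,d\rho$. Each splits into a ``cap'' contribution $\lesssim r^{d-\alpha}$ from $\rho \leq cr$ and a ``tail'' contribution from $\rho > cr$ whose size is $r^{d-\alpha}$, $|\log r|$, or $\mathcal{O}(1)$ according to whether $\alpha > d$, $\alpha = d$, or $\alpha < d$. For instance, $\|L_r\|_{L^1}\lesssim |\log r|$ arises precisely from the critical tail $\alpha = d$, and $\|Q_r\|_{L^2}^2 \lesssim r^{-d-2}$ from the super-critical tail $\alpha = 2(d+1)$. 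The remaining cases, including the (slightly suboptimal but convenient) claim $\|K_r\|_{L^2}\lesssim r^{(1-d)/2}$, follow by the same bookkeeping. No single step is conceptually hard; the only care required is in tracking the enlarged-ball radii $2r, 3r, 4r$ and the exponents in the radial integrals.
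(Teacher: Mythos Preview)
Your proposal is correct and follows essentially the same approach as the paper: scaling for the $\chi_r$ family, near/far-field splitting for the $K_r$ family, then integrating the pointwise bounds. Your near-field argument for $K_r$ (using local integrability $\int_{B_\rho(0)}|K|\,dx \lesssim \rho$) is in fact slightly sharper than the paper's, which uses the cruder global Young bound $\|K_r\|_{L^\infty} \leq \|K\|_{L^1}\|\chi_r\|_{L^\infty} \lesssim r^{-d}$; your version actually recovers the stated pointwise exponent $r^{1-d}$ in the near field.
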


\begin{proof}
\eqref{item:chi_ests} Since $\chi_r(y) = r^{-d} \chi (y/r)$, we have
\be
\nabla \chi_r(y) = r^{-d - 1} \nabla \chi (y/r), \qquad \nabla^2 \chi_r(y) = r^{-d - 2} \nabla^2 \chi (y/r) .
\ee
The estimates \eqref{est:chi_ptwise} then follow from the fact that $\chi (y/r)$ is non-zero only if $|y|_{\TT^d} < r$. The $L^1$ bound \eqref{est:chi_L1} on $\chi_r$ holds since $\| \chi_r \|_{L^1} = \| \chi \|_{L^1} = 1$.
We deduce the remaining $L^1$ bounds \eqref{est:chi_L1} and $L^2$ bounds \eqref{est:chi_L2} by integrating the estimates \eqref{est:chi_ptwise} over all $y \in \TT^d$.

\noindent \eqref{item:K_ests} We note that, by \eqref{def:K1}, the Coulomb kernel $K$ is an $L^1(\TT^d)$ function. Then, since $K_r = \chi_r \ast K$, by Young's inequality for convolutions and \eqref{est:chi_ptwise} we have
\be \label{est:Kr_smally}
\| K_r \|_{L^\infty(\TT^d)} \leq \| K \|_{L^1(\TT^d)} \| \chi_r \|_{L^\infty(\TT^d)} \lesssim_{d, \chi} r^{-d} .
\ee
We will use \eqref{est:Kr_smally} in the case where $|y|_{\TT^d} \leq 2r$.

Otherwise, we write out the convolution to obtain the expression
\be
K_r(y) = \int_{\TT^d} K(y - y') \chi_r(y') \dd y ' .
\ee
Since $\chi_r(y') = 0$ when $|y ' |_{\TT^d} > r$, then
\be
| K_r(y) | \lesssim_d \int_{\TT^d}  | y - y' |^{1-d}_{\TT^d} \one_{\{ |y ' |_{\TT^d} \leq r \}} \, \chi_r(y') \dd y ' .
\ee
If $|y|_{\TT^d} > 2r$ and $|y ' |_{\TT^d} \leq r$, then $| y - y' |_{\TT^d} \geq |y|_{\TT^d} - |y ' |_{\TT^d} > \frac{1}{2} |y|_{\TT^d}$. Hence
\be \label{est:Kr_bigy}
| K_r(y) | \lesssim_d |y|_{\TT^d}^{1-d} \int_{\TT^d} \chi_r(y') \dd y ' \lesssim_d |y|_{\TT^d}^{1-d} .
\ee
Combining \eqref{est:Kr_smally} and \eqref{est:Kr_bigy} results in the estimate \eqref{est:K_ptwise} for $K_r$.

For $L_r$, we note first that we may write $\nabla K_r$ as either $K \ast \nabla \chi_r$ or $\nabla K \ast \chi_r$. The first case gives us the global estimate
\be
\| \nabla K_r \|_{L^\infty(\TT^d)} \lesssim_d \| K \|_{L^1(\TT^d)} \| \nabla \chi_r \|_{L^\infty(\TT^d)} \lesssim_{d, \chi} r^{-d - 1} . 
\ee
We will use the second case to estimate $\nabla K_r(z)$ where $z \in B_r(y)$ and $|y|_{\TT^d} > 3r$. In particular this implies that $|z|_{\TT^d} > \frac{2}{3} |y|_{\TT^d} > 2r$.
Then
\be
| \nabla K_r(z) | = \left | \int_{\TT^d} \nabla K(z - z') \chi_r(z') \dd z ' \right | \lesssim_d \left | \int_{\TT^d} | z - z'|_{\TT^d}^{-d} \one_{\{ |z ' |_{\TT^d} \leq r \}} \,  \chi_r(z') \dd z ' \right |.
\ee
We apply the reverse triangle inequality to obtain the lower bound $| z - z'|_{\TT^d} \geq | z|_{\TT^d}  - | z'|_{\TT^d} > \frac{2}{3} |y|_{\TT^d} - r > \frac{1}{3} |y|_{\TT^d}$.
Hence 
\be
\sup_{z \in B_r(y)} | \nabla K_r(z) | \lesssim_d  | y |_{\TT^d}^{-d} , \qquad |y|_{\TT^d} > 3r,
\ee
and we deduce the pointwise estimate \eqref{est:K_ptwise} for $L_r$. The estimate for $Q_r$ follows in a similar way from the bound $\nabla^2 K(y) \lesssim_d |y|_{\TT^d}^{-d-1}$.

Finally, the $L^1$ bounds \eqref{est:K_L1} and $L^2$ bounds \eqref{est:K_L2} follow from integrating the pointwise bounds \eqref{est:K_ptwise} over $y\in \TT^d$; see e.g. \cite[Proposition 7.2]{LazaroviciPickl} for similar estimates.

\end{proof}

\subsection{Estimates Between Empirical Measures}

As a first application of local Lipschitz bounds, we obtain $L^p$-type stability estimates for the convolution of a locally Lipschitz function with an empirical measure. If two particle systems are within distance $r$ of each other, then the difference between the convolutions with their empirical measures can be bounded using an estimate with a `weak-strong' form, in which the bound depends on the convolution of the Lipschitz modulus with only one of the empirical measures.

\begin{lemma} \label{lem:gApproxEmpirical}
Let ${\bf X} = ( X_i )_{i=1}^N, {\bf Y} = ( Y_i )_{i=1}^N \in (\TT^d)^N$ be two $N$-tuples of points in $\TT^d$.
Let $g : \TT^d \to \R$ be a Lipschitz continuous function with $r$-local Lipschitz modulus $h$.
Assume that $| {\bf X} - {\bf Y} |_{\infty} < r$. Then, for any $p \in [1, + \infty]$,
\be \label{eq:g_rhoXY_stab}
\| g\ast \mu_{ {\bf X}} - g \ast \mu_{{\bf Y}} \|_{L^p} \leq \| h \ast \mu_{{\bf Y}} \|_{L^p} |{\bf X} -{\bf Y}|_{\infty} .
\ee

\end{lemma}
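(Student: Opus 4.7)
My plan is to unwind the definitions and exploit the $r$-local Lipschitz property of $g$ pointwise, then take the $L^p$ norm. Since the empirical measures are sums of Diracs, the convolutions write out as
\begin{equation}
g \ast \mu_{\bf X}(z) - g \ast \mu_{\bf Y}(z) = \frac{1}{N} \sum_{i=1}^N \bigl[ g(z - X_i) - g(z - Y_i) \bigr],
\end{equation}
and pointwise control of each summand is what I need.

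For each $i$ and each $z \in \TT^d$, the pair $(z - X_i, z - Y_i)$ has separation $|X_i - Y_i|_{\TT^d} \leq |{\bf X} - {\bf Y}|_\infty < r$. This places the pair within the regime where Definition~\ref{def:localLip} applies (with $x = z - X_i$, $y = z - Y_i$), yielding
\begin{equation}
|g(z - X_i) - g(z - Y_i)| \leq h(z - Y_i)\, |X_i - Y_i|_{\TT^d} \leq h(z - Y_i)\, |{\bf X} - {\bf Y}|_\infty.
\end{equation}
The crucial point is that the modulus $h$ is evaluated at $z - Y_i$ (not at an intermediate point), which is precisely the form appearing in the convolution against $\mu_{\bf Y}$.

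Summing over $i$ and dividing by $N$, the triangle inequality gives the pointwise bound
\begin{equation}
|g \ast \mu_{\bf X}(z) - g \ast \mu_{\bf Y}(z)| \leq \Bigl(\frac{1}{N} \sum_{i=1}^N h(z - Y_i)\Bigr) |{\bf X} - {\bf Y}|_\infty = (h \ast \mu_{\bf Y})(z)\, |{\bf X} - {\bf Y}|_\infty.
\end{equation}
Taking $L^p(\TT^d)$ norms on both sides (valid for all $p \in [1,+\infty]$ since $|{\bf X}-{\bf Y}|_\infty$ is a scalar constant) delivers the stated inequality \eqref{eq:g_rhoXY_stab}.

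There is essentially no obstacle here: the proof is a direct one-line calculation once one observes that the asymmetric nature of Definition~\ref{def:localLip}---$h$ evaluated only at the endpoint $y$---is exactly what allows the modulus to be pulled into a convolution against $\mu_{\bf Y}$ alone, producing the desired weak--strong form of the stability estimate.
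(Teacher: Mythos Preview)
Your proof is correct and follows essentially the same approach as the paper: expand the convolutions, apply the $r$-local Lipschitz property to each summand (evaluating $h$ at $z - Y_i$), sum to obtain the pointwise bound $(h \ast \mu_{\bf Y})(z)\,|{\bf X}-{\bf Y}|_\infty$, and then take $L^p$ norms.
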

\begin{proof}
By the definitions of the empirical measures $\mu_{{\bf X}}$ and $\mu_{{\bf Y}}$, and the triangle inequality, 
\begin{align}
| g \ast \mu_{{\bf X}} (y) - g \ast \mu_{{\bf Y}} (y) | & = \left | \frac{1}{N} \sum_{i=1}^N \Big [ g (y- X_i ) - g (y- Y_i ) \Big ] \right |  \\
& \leq \frac{1}{N} \sum_{i=1}^N \left | g (y- X_i ) - g (y- Y_i ) \right | .
\end{align}
Since $|X_i - Y_i| < r$ for all $i = 1, \ldots, N$, by the property \eqref{eq:ModOfCont} of $g$ and $h$ (choosing, for each $i$, to evaluate $h$ at $y - Y_i$),
we obtain the upper bound
\begin{align}
\frac{1}{N} \sum_{i=1}^N \left | g (y- X_i ) - g (y- Y_i ) \right | & \leq  \frac{1}{N} \sum_{i=1}^N  h(y- Y_i) \, |X_i - Y_i |  \\
& \leq \left ( \frac{1}{N} \sum_{i=1}^N h(y- Y_i) \right )   |{\bf X} - {\bf Y}|_{\infty} \, \\
& \leq h \ast \mu_{{\bf Y}}(y) \,  |{\bf X} - {\bf Y}|_{\infty} .
\end{align}
The bound \eqref{eq:g_rhoXY_stab} then follows from taking the $L^p$ norm of both sides with respect to $y \in \TT^d$.
\end{proof}

\subsection{Law of Large Numbers}

We now turn to the law of large numbers result for convolutions with empirical measures $g \ast \mu_{\bf Y}$, where ${\bf Y} = \{ Y_i\}_{i=1}^N$ is an $N$-tuple of independent and identically distributed random variables.
The goal is to obtain rates of convergence depending on the local Lipschitz modulus. 
Our first result reduces the problem of convergence in $L^\infty$ to the problem of convergence on a finite mesh of points.

\begin{lemma} \label{lem:WeakStrongLinfty}
Let $\nu_1, \nu_2$ be measures on $\TT^d$. 
Consider a collection of $M$ points $\{ y_m \}_{m=1}^M \subset \TT^d$ such that $\TT^d \subset \bigcup_{m=1}^M B_r(y_m)$.
Let $g$ be a Lipschitz function with $r$-local Lipschitz modulus $h$. 
Then
\be
\| g \ast \nu_1 - g \ast \nu_2 \|_{L^\infty} 
 \leq 2r \| h \ast \nu_1 \|_{L^\infty} + \sup_{m} |g \ast \nu_1(y_m) - g \ast \nu_2(y_m)| 
+ r \sup_m  |h \ast \nu_1(y_m) -  h \ast \nu_2(y_m) | .
\ee

\end{lemma}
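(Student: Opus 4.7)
The plan is to reduce an $L^\infty$ estimate over all of $\TT^d$ to a finite sup over the mesh points, using the local Lipschitz modulus to control the ``gap'' between an arbitrary point $y$ and the nearest mesh point $y_m$.

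More concretely, I fix an arbitrary $y \in \TT^d$. By the covering hypothesis there exists $m = m(y)$ such that $|y - y_m|_{\TT^d} < r$. The key observation is that, for every $z \in \TT^d$, the two displaced points $y - z$ and $y_m - z$ satisfy $|(y-z) - (y_m - z)|_{\TT^d} < r$, so the $r$-local Lipschitz modulus applies: $|g(y - z) - g(y_m - z)| \leq h(y_m - z)\,|y - y_m|_{\TT^d}$. Integrating against $\nu_i$, for $i = 1,2$,
\begin{equation}
|g \ast \nu_i(y) - g \ast \nu_i(y_m)| \;\leq\; |y - y_m|_{\TT^d}\, (h \ast \nu_i)(y_m) \;\leq\; r\, (h \ast \nu_i)(y_m).
\end{equation}

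Then I use the triangle inequality in the standard ``insert and subtract'' form:
\begin{equation}
|g \ast \nu_1(y) - g \ast \nu_2(y)| \leq |g \ast \nu_1(y) - g \ast \nu_1(y_m)| + |g \ast \nu_1(y_m) - g \ast \nu_2(y_m)| + |g \ast \nu_2(y_m) - g \ast \nu_2(y)|.
\end{equation}
The first term is bounded by $r (h\ast\nu_1)(y_m) \leq r \| h \ast \nu_1\|_{L^\infty}$. The third term is bounded by $r(h \ast \nu_2)(y_m)$, which I rewrite using $(h\ast\nu_2)(y_m) \leq (h\ast\nu_1)(y_m) + |(h\ast\nu_1 - h\ast\nu_2)(y_m)|$, yielding an additional $r\|h\ast\nu_1\|_{L^\infty}$ together with $r\sup_m|(h\ast\nu_1 - h\ast\nu_2)(y_m)|$. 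The middle term is bounded by $\sup_m|g\ast\nu_1(y_m) - g\ast\nu_2(y_m)|$.

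Collecting these three contributions and taking the supremum over $y \in \TT^d$ produces precisely the claimed inequality. There is no serious obstacle here: the only subtlety is the choice of which variable to plug into the local Lipschitz modulus, which must be the mesh point $y_m$ (shifted by $z$) rather than $y$, so that the $h\ast\nu_i$ terms are evaluated at mesh points and can thus be compared on the finite set $\{y_m\}_{m=1}^M$. The reason for converting $r(h\ast\nu_2)(y_m)$ to $r(h\ast\nu_1)(y_m)$ plus an error is to obtain the asymmetric form of the estimate, which will be essential later because in the intended application it is $\nu_1$ (the reference/deterministic measure) for which a priori $L^\infty$ bounds on $h \ast \nu_1$ are available, while $\nu_2$ (an empirical measure) is only controlled through concentration estimates of the type $\sup_m|h\ast\nu_1(y_m) - h\ast\nu_2(y_m)|$ on the finite mesh.
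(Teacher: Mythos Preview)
Your proof is correct and follows the same core idea as the paper --- reduce to the mesh via the $r$-local Lipschitz modulus, then shift the $h\ast\nu_2$ contribution onto $h\ast\nu_1$ plus a mesh-difference term --- but your execution is more direct. The paper introduces a continuous partition of unity $\{\zeta_m\}$ subordinate to the cover and approximates each $g\ast\nu_i$ by the interpolant $\sum_m g\ast\nu_i(y_m)\,\zeta_m$, then estimates the approximation error by two separate choices of where to evaluate $h$ (at $y-y'$ for $\nu_1$, at $y_m-y'$ for $\nu_2$). You instead pick, for each $y$, a single nearby mesh point $y_m$ and apply the modulus at $y_m-z$ for \emph{both} measures, which avoids the partition of unity entirely and collapses the argument to a three-term triangle inequality. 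Your route is shorter and loses nothing: the partition of unity in the paper is not doing essential work beyond what the covering already provides, and both arguments land on exactly the same constant $2r$.
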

\begin{remark}
We emphasise as before that the $L^\infty$ norm is evaluated only on the convolutions with $\nu_1$.
\end{remark}

\begin{proof}
By assumption, the sets $\{ B_r(y_m) \}_{m=1}^M$ form an open cover of $\TT^d$.
Let $\{ \zeta_m \}_{m=1}^M$ denote a continuous partition of unity subordinate to this cover.
That is, each continuous function $\zeta_m$ takes non-negative values and is supported in the set $B_r(y_m)$, while
\be \label{zeta_partof1}
\sum_{m=1}^M \zeta_m(y) = 1  \quad \text{for all} \; y \in \TT^d.
\ee

For each $i=1,2$, we will approximate $g \ast \nu_i$ by interpolating its values at the points $\{ y_m \}_{m=1}^M$, using the functions $\{\zeta_m \}_{m=1}^M$, namely:
\be
g \ast \nu_i \simeq  \sum_{m=1}^M  g \ast \nu_i (y_m) \, \zeta_m .
\ee
To estimate the error in this approximation, we use \eqref{zeta_partof1} to write, for all $y \in \TT^d$,
\begin{align}
\left | g \ast \nu_i (y) - \sum_{m=1}^M g \ast \nu_i(y_m) \, \zeta_m(y)   \right | & =  \left |  g \ast \nu_i (y) \sum_{m=1}^M \zeta_m(y)  - \sum_{m=1}^Mg \ast \nu_i(y_m) \, \zeta_m(y)   \right | \\
& =  \left |  \sum_{m=1}^M \left ( g \ast \nu_i (y) - g \ast \nu(y_m) \right ) \zeta_m(y)  \right | \\
& \leq \sum_{m=1}^M  \left |  g \ast \nu_i (y) - g \ast \nu_i(y_m) \right | \zeta_m(y) .
\end{align}
By expanding the convolution, we find that
\be
 \left |  g \ast \nu_i (y) - g \ast \nu_i(y_m) \right | \leq \int_{\TT^d}  |g(y - y') - g(y_m - y') | \dd \nu_i(y') .
\ee

For all $y \in \TT^d$ such that $\zeta_m(y)>0$, and any $y ' \in \TT^d$, we have $| ( y - y') - (y_m - y')| = |y - y_m| < r$. Hence we may apply the property \eqref{eq:ModOfCont} of $g$ and $h$. Choosing to evaluate $h$ at the point $y - y'$ leads to the following estimate, valid for each $m = 1, \ldots M$:
\begin{align}
 \left |  g \ast \nu_i (y) - g \ast \nu_i(y_m) \right | \zeta_m(y) & \leq \zeta_m(y) \int_{\TT^d}  h (y - y') \, |y - y_m| \dd \nu_i(y') \\
 & \leq |y - y_m| \zeta_m(y) \,h \ast \nu_i(y) \\
 & \leq r \zeta_m(y) \,h \ast \nu_i(y) .
\end{align}
Hence, by summing these inequalities and using \eqref{zeta_partof1}, we obtain that for all $y \in \TT^d$,
\be \label{est:conv-Lip}
\left | g \ast \nu_i (y) - \sum_{m=1}^M g \ast \nu_i(y_m) \, \zeta_m(y)   \right | \leq r \,h \ast \nu_i(y)  \sum_{m=1}^M \zeta_m(y) = r h \ast \nu_i(y) .
\ee

If we choose instead to evaluate $h$ at the point $y_m - y'$, we obtain the following alternative estimate for $m = 1, \ldots M$:
\begin{align}
 \left |  g \ast \nu_i (y) - g \ast \nu_i(y_m) \right | \zeta_m(y) & \leq \zeta_m(y) \int_{\TT^d}  h (y_m - y') \, |y - y_m| \dd \nu_i(y') \\
 & \leq r \zeta_m(y) \,h \ast \nu_i(y_m).
\end{align} 
Summing these contributions, we obtain for all $y \in\TT^d$
\be \label{est:conv-Lip-approx}
\left | g \ast \nu_i (y) - \sum_{m=1}^M g \ast \nu_i(y_m) \, \zeta_m(y) \right | \leq r \, \sum_{m=1}^M \zeta_m(y)\, h \ast \nu_i(y_m) .
\ee

Next, we will apply the estimates \eqref{est:conv-Lip} and \eqref{est:conv-Lip-approx} to the estimation of 
\begin{multline} \label{total-triangle}
\left | g \ast \nu_1 - g \ast \nu_2 \right |  \leq \left | \sum_{m=1}^M ( g \ast \nu_1(y_m) - g \ast \nu_2(y_m) ) \, \zeta_m \right | \\
 +  \left | g \ast \nu_1 - \sum_{m=1}^M g \ast \nu_1(y_m) \, \zeta_m   \right | + \left | g \ast \nu_2 - \sum_{m=1}^M g \ast \nu_2 (y_m) \, \zeta_m   \right | .
\end{multline}
We bound the first term by taking supremum over the quantities $|g \ast \nu_1(y_m) - g \ast \nu_2(y_m)|$:
\begin{align}
\left | \sum_{m=1}^M ( g \ast \nu_1(y_m) - g \ast \nu_2(y_m) ) \, \zeta_m(y) \right | & \leq \left ( \sup_{m} |g \ast \nu_1(y_m) - g \ast \nu_2(y_m)| \right ) \sum_{m' = 1}^M \zeta_{m'} \\
& \leq \sup_{m} |g \ast \nu_1(y_m) - g \ast \nu_2(y_m)|, \label{approx-dev}
\end{align}
where the last inequality is a consequence of \eqref{zeta_partof1}.
We use inequality \eqref{est:conv-Lip} to estimate the deviation of $\nu_1$, obtaining
\be \label{gnu1-dev}
\left | g \ast \nu_1 - \sum_{m=1}^M g \ast \nu_1(y_m) \, \zeta_m   \right | \leq r \, h \ast \nu_1 .
\ee
For $\nu_2$, we instead apply \eqref{est:conv-Lip-approx} to obtain
\be \label{nu2-dev-hnu2}
\left | g \ast \nu_2 - \sum_{m=1}^M g \ast \nu_2 (y_m) \, \zeta_m   \right | \leq r \, \sum_{m=1}^M  h \ast \nu_2(y_m)\,  \zeta_m.
\ee
It remains to estimate the right hand side of \eqref{nu2-dev-hnu2} using quantities involving $\nu_1$.

First, applying the triangle inequality gives the bound
\be
\sum_{m=1}^M  h \ast \nu_2(y_m)\,  \zeta_m \leq \sum_{m=1}^M  h \ast \nu_1(y_m)\,  \zeta_m + \sum_{m=1}^M  |h \ast \nu_1(y_m) -  h \ast \nu_2(y_m) | \,  \zeta_m .
\ee
The second term may be estimated by taking supremum over $|h \ast \nu_1(y_m) -  h \ast \nu_2(y_m) |$ and applying \eqref{zeta_partof1} to obtain
\be \label{nu2happrox-tri}
\sum_{m=1}^M  h \ast \nu_2(y_m)\,  \zeta_m \leq \sum_{m=1}^M  h \ast \nu_1(y_m)\,  \zeta_m + \sup_m  |h \ast \nu_1(y_m) -  h \ast \nu_2(y_m) | .
\ee
We then deduce that
\begin{align} 
\left | \sum_{m=1}^M  h \ast \nu_2(y_m)\,  \zeta_m \right | & \leq \| h \ast \nu_1 \|_{L^\infty} \sum_{m=1}^M  \zeta_m + \sup_m  |h \ast \nu_1(y_m) -  h \ast \nu_2(y_m) | \\
& \leq \| h \ast \nu_1 \|_{L^\infty} + \sup_m  |h \ast \nu_1(y_m) -  h \ast \nu_2(y_m) |  . \label{nu2happrox-tri2}
\end{align}

We combine \eqref{nu2-dev-hnu2} with \eqref{nu2happrox-tri2} to obtain
\be \label{gnu2-dev-infty}
\left | g \ast \nu_2 - \sum_{m=1}^M g \ast \nu_2 (y_m) \, \zeta_m   \right | \leq r \| h \ast \nu_1 \|_{L^\infty} + r \sup_m  |h \ast \nu_1(y_m) -  h \ast \nu_2(y_m) | .
\ee
Substituting
\eqref{approx-dev} \eqref{gnu1-dev} and \eqref{gnu2-dev-infty} into \eqref{total-triangle} gives
\be
\left | g \ast \nu_1 - g \ast \nu_2 \right | \leq  r \, h \ast \nu_1 + r \| h \ast \nu_1 \|_{L^\infty} + \sup_{m} |g \ast \nu_1(y_m) - g \ast \nu_2(y_m)| + r \sup_m  |h \ast \nu_1(y_m) -  h \ast \nu_2(y_m) | .
\ee
Taking the supremum on both sides completes the proof.

\end{proof}

Next, we consider the law of large numbers setting, taking the measure $\nu_2$ to be an empirical measure induced by a collection of independent, identically distributed random variables. We prove the following rate of convergence in probability as the number of samples tends to infinity.

\begin{prop} \label{prop:LLN}
Let $\rho \in L^\infty_+(\TT^d)$ be a bounded probability density on $\TT^d$.
Let ${\bf Y}: \Omega \to (\TT^d)^N$ be a random $N$-tuple on $(\Omega, \cF, \PP)$ such that ${\bf Y} \sim \rho^{\otimes N}$ under $\PP$.

For $r > 0$, let $g : \TT^d \to \R$ be a Lipschitz continuous function with $r$-local Lipschitz modulus $h \in L^\infty(\TT^d)$.
For any $\gamma > 0$ and  $\delta \in (0,1)$ let $A_g = A_g(\gamma, \delta) \in \cF$ denote the event 
\be
A_g : = \Big  \{ \| g \ast \mu_{{\bf Y}} - g \ast \rho \|_{L^\infty} \leq 2 r \| \rho \|_{L^\infty}  \left (  \delta \| h  \|_{L^1} +  \gamma \right ) \Big \} .
\ee
Then
\be
\PP(A_g^c) \lesssim (r \delta)^{-d} \left ( \exp{\left (- N \| \rho \|_{L^\infty} \frac{r^2 \gamma^2}{2 (\| g  \|_{L^2}^2 + r \gamma \| g \|_{L^\infty} /3)} \right )} +  \exp{\left (- N \| \rho \|_{L^\infty} \frac{ \gamma^2}{2( \| h \|_{L^2}^2 + \gamma \| h \|_{L^\infty}/3 )} \right )} \right ) .
\ee
\end{prop}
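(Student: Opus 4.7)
The plan is to apply Lemma~\ref{lem:WeakStrongLinfty} to reduce the $L^\infty$ bound on $g \ast \mu_{\bf Y} - g \ast \rho$ to pointwise bounds on a finite mesh, and then to apply Bernstein's inequality at each mesh point. Since $h$ is an $r$-local Lipschitz modulus for $g$, it is also an $r'$-local Lipschitz modulus for any $r' \in (0, r)$. I would take $r' = r \delta$ and pick a mesh $\{y_1, \ldots, y_M\} \subset \TT^d$ with $M \lesssim (r\delta)^{-d}$ such that the balls $B_{r\delta}(y_m)$ cover $\TT^d$. Applying Lemma~\ref{lem:WeakStrongLinfty} with $\nu_1 = \rho$ and $\nu_2 = \mu_{\bf Y}$ gives
\begin{align}
\| g \ast \mu_{\bf Y} - g \ast \rho \|_{L^\infty} &\leq 2 r\delta \, \| h \ast \rho \|_{L^\infty} + \sup_m |g\ast\rho(y_m) - g\ast\mu_{\bf Y}(y_m)| \\
&\qquad + r\delta \sup_m |h\ast\rho(y_m) - h\ast\mu_{\bf Y}(y_m)| .
\end{align}
Young's inequality controls the deterministic first term by $2r\delta \|h\|_{L^1} \|\rho\|_{L^\infty}$, which provides exactly the $\delta \|h\|_{L^1}$ contribution in the target event.

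For the two random suprema I would use Bernstein's inequality. For each fixed mesh point $y_m$, the quantity $g \ast \mu_{\bf Y}(y_m) = N^{-1} \sum_i g(y_m - Y_i)$ is an empirical average of i.i.d.\ random variables with mean $g\ast\rho(y_m)$, variance
$$\Var(g(y_m - Y_1)) \leq \int g(y_m - y)^2 \rho(y) \dd y \leq \|\rho\|_{L^\infty} \|g\|_{L^2}^2,$$
and almost-sure bound $\|g\|_{L^\infty}$. Applying Bernstein with threshold $t = r \|\rho\|_{L^\infty} \gamma$ gives
$$ \PP\bigl( |g\ast\rho(y_m) - g\ast\mu_{\bf Y}(y_m)| > r\|\rho\|_{L^\infty} \gamma \bigr) \lesssim \exp\left(-\frac{N \|\rho\|_{L^\infty} r^2 \gamma^2/2}{\|g\|_{L^2}^2 + \|g\|_{L^\infty} r \gamma/3}\right),$$
which, after simplification, matches the first exponential in the claimed bound. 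The same argument applied to $h$ in place of $g$, with threshold $t = \|\rho\|_{L^\infty} \gamma$, yields the second exponential.

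The final step is to take a union bound over the $M \lesssim (r\delta)^{-d}$ mesh points, for both the $g$-deviations and the $h$-deviations. On the intersection of the two good events, the second and third contributions in the Lemma~\ref{lem:WeakStrongLinfty} inequality are bounded by $r \|\rho\|_{L^\infty} \gamma$ and $r\delta \cdot \|\rho\|_{L^\infty}\gamma \leq r \|\rho\|_{L^\infty} \gamma$ respectively, so they sum to $2 r \|\rho\|_{L^\infty} \gamma$, matching the $\gamma$ piece in the target bound. I do not expect any step to be a serious obstacle: once the mesh scale $r\delta$ and the thresholds $r\|\rho\|_{L^\infty}\gamma$ and $\|\rho\|_{L^\infty}\gamma$ are chosen as above, everything balances. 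The only delicate book-keeping point is ensuring that the Bernstein variance term uses $\|\rho\|_{L^\infty} \|g\|_{L^2}^2$ (not merely $\|g\|_{L^2}^2$), since this is crucial for getting the exponent with the correct dependence on $\|\rho\|_{L^\infty}$.
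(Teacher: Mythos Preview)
Your proposal is correct and follows essentially the same approach as the paper: mesh at scale $r\delta$ with $M \lesssim (r\delta)^{-d}$ points, apply Lemma~\ref{lem:WeakStrongLinfty} with $\nu_1 = \rho$, $\nu_2 = \mu_{\bf Y}$, control the deterministic term by Young's inequality, then use Bernstein at each mesh point with thresholds $r\gamma\|\rho\|_{L^\infty}$ for $g$ and $\gamma\|\rho\|_{L^\infty}$ for $h$, and finish with a union bound. The book-keeping point you flag about the variance bound $\|\rho\|_{L^\infty}\|g\|_{L^2}^2$ is exactly the one that makes the $\|\rho\|_{L^\infty}$ factor appear cleanly in the exponent.
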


The proof of this proposition uses the Bernstein inequality \cite{Bernstein}, which we state below; see e.g. \cite{BoucheronLugosiMassart} for proofs and other versions.

\begin{thm} \label{thm:Bernstein}
Let $U_1, \ldots, U_N : \Omega \to \R$ be independent identically distributed real-valued random variables that are bounded, i.e. there exists $B > 0$ such that $|U_i| \leq B$. Then, for all $\xi> 0$,
\be
\PP \left ( \left | \frac{1}{N} \sum_{j=1}^N U_j - \EE[U_1] \right | > \xi \right ) \leq 2 \exp{\left (- N \frac{\xi^2}{2(\Var U_1 + B \xi /3 )} \right )} .
\ee

\end{thm}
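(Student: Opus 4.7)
The plan is to follow the classical Chernoff--Cram\'er exponential moment method. By symmetry, it suffices to bound $\PP(N^{-1}\sum_j U_j - \EE U_1 > \xi)$ and then apply a union bound with the tail of $-U_j$, giving the factor of~$2$. Set $X_j := U_j - \EE U_1$, so that the $X_j$ are i.i.d., centered, bounded (with $|X_j| \leq 2B$), and $\Var X_j = \Var U_1$. For any $\lambda > 0$, Markov's inequality applied to $e^{\lambda \sum_j X_j}$ together with independence gives
\[
\PP\Bigl(\tfrac{1}{N}\sum_{j=1}^N X_j > \xi\Bigr) \leq e^{-\lambda N \xi}\,\bigl(\EE e^{\lambda X_1}\bigr)^N.
\]

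The heart of the argument is to control $\EE e^{\lambda X_1}$ via its Taylor expansion. Since $\EE X_1 = 0$, we have $\EE e^{\lambda X_1} = 1 + \sum_{k \geq 2} \lambda^k \EE X_1^k / k!$, and using the boundedness $|X_1|\leq 2B$ together with $\EE X_1^2 = \Var U_1$, one bounds $|\EE X_1^k| \leq (2B)^{k-2}\Var U_1$ for every $k \geq 2$. The crucial quantitative step is the elementary inequality $k! \geq 2\cdot 3^{k-2}$ for $k\geq 2$, which allows the remaining series to be summed as a geometric tail: for all $\lambda$ in an admissible range $\lambda < c/B$,
\[
\EE e^{\lambda X_1} \leq 1 + \frac{\lambda^2 \Var U_1/2}{1 - c'B\lambda} \leq \exp\!\Bigl(\frac{\lambda^2 \Var U_1/2}{1 - c'B\lambda}\Bigr),
\]
where $c,c'$ are absolute constants of order $1$; the last step uses $1+x \leq e^x$. (To obtain exactly the constant $B/3$ appearing in the statement, one absorbs the factor of $2$ coming from $|X_1|\leq 2B$ into the constant or, alternatively, works directly with the one-sided Bernstein moment condition $\EE X_1^k \leq \tfrac{1}{2}k! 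B^{k-2}\Var U_1$.)

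Substituting back gives
\[
\PP\Bigl(\tfrac{1}{N}\sum_j X_j > \xi\Bigr) \leq \exp\!\Bigl(-N\lambda \xi + N\,\frac{\lambda^2 \Var U_1/2}{1 - B\lambda/3}\Bigr),
\]
and it remains to optimize over $\lambda \in (0, 3/B)$. Choosing $\lambda^\ast = \xi/(\Var U_1 + B\xi/3)$ (which one verifies lies in the admissible range) yields precisely the exponent $-N\xi^2/\bigl(2(\Var U_1 + B\xi/3)\bigr)$. Adding the analogous bound for the lower tail (obtained by applying the same argument to $-U_j$) produces the factor $2$ and completes the proof.

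The main obstacle is the sharp control of the moment generating function with the linear-in-$\lambda$ correction $B\lambda/3$ in the denominator; this is what distinguishes Bernstein's inequality from the weaker Hoeffding bound (which would give $B^2$ in place of $\Var U_1 + B\xi/3$) and is responsible for its sensitivity to the variance when $\xi$ is small. Everything else is routine exponential-Markov machinery and the standard Legendre transform computation to find the optimal $\lambda^\ast$.
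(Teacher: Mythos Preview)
The paper does not prove this theorem at all: it is quoted as a classical result, with a citation to Bernstein's original work and to \cite{BoucheronLugosiMassart} for a proof. Your proposal therefore goes beyond what the paper supplies, and the Chernoff--Cram\'er approach you outline is exactly the standard one found in those references.

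Your argument is correct in structure. The one point that deserves a cleaner treatment is the constant in the denominator. From $|U_i|\leq B$ you only get $|X_j|=|U_j-\EE U_1|\leq 2B$, and plugging this into the moment bound $|\EE X_1^k|\leq (2B)^{k-2}\Var U_1$ and the geometric summation yields $2B\xi/3$ rather than $B\xi/3$. You flag this, but the fix you suggest (``absorb the factor of $2$'') is not really a fix: either one accepts the weaker constant $2B/3$, or one proves the sharper Bennett-type bound on the moment generating function using the inequality $e^{\lambda x}\leq 1+x(e^{\lambda c}-1)/c$ valid for $x\leq c$ (which only needs the \emph{one-sided} bound $X_1\leq c$), and then deduces Bernstein's form. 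Since the paper only invokes the inequality as a black box and the precise constant in front of $B\xi$ is irrelevant to its application (it only affects implicit constants in the exponent of Proposition~\ref{prop:LLN}), this discrepancy is immaterial here.
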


\begin{proof}[Proof of Proposition~\ref{prop:LLN}]

Fix a collection of points $\{ y_m \}_{m=1}^M \subset \TT^d$ such that the sets $\{ B_{r\delta}(y_m) \}_{m=1}^M$ form a cover of $\TT^d$ (i.e., $\TT^d \subset \bigcup_{m=1}^M B_{r \delta}(y_m)$), and $M \lesssim (r \delta)^{-d}$.
Since $h$ is an $r$-local Lipschitz modulus for $g$, it is also an $(r \delta )$-local Lipschitz modulus. Hence, by Lemma~\ref{lem:WeakStrongLinfty},
\be \label{est:g_dev_1}
\| g \ast \mu_{{\bf Y}} - g \ast \rho \|_{L^\infty} \leq 2 r \delta \| h \ast \rho \|_{L^\infty} +  r \delta \sup_m | h \ast \rho (y_m) - h \ast \mu_{{\bf Y}} (y_m)| 
+ \sup_m | g \ast \rho (y_m) - g \ast \mu_{{\bf Y}} (y_m)| .
\ee
Using Young's inequality for convolutions, we can bound $ \| h \ast \rho \|_{L^\infty} \leq  \| h \|_{L^1} \| \rho \|_{L^\infty}$.
Thus $\| g \ast \mu_{{\bf Y}} - g \ast \rho \|_{L^\infty} \leq 2 r \| \rho \|_{L^\infty} \left (  \delta \| h \|_{L^1} +  \gamma \right )$ holds if 
\be
 \sup_m | g \ast \rho (y_m) - g \ast \mu_{{\bf Y}} (y_m)| \leq r \gamma \| \rho \|_{L^\infty} \quad \text{and} \quad \sup_m | h \ast \rho (y_m) - h \ast \mu_{{\bf Y}} (y_m)| \leq \gamma \| \rho \|_{L^\infty} .
\ee
In other words,
\be
A_g^c \subset \bigcup_{m=1}^M \{ \left | g \ast \mu_{{\bf Y}} (y_m) - g \ast \rho (y_m) \right | > r \gamma \| \rho \|_{L^\infty} \} \cup \{ \left | h \ast \mu_{{\bf Y}} (y_m) - h \ast \rho (y_m) \right | > \gamma \| \rho \|_{L^\infty} \} .
\ee
We may therefore estimate
\be
\PP(A^c_g) 
 \leq \sum_{m=1}^M \PP ( \left | g \ast \mu_{{\bf Y}} (y_m) - g \ast \rho (y_m) \right | > r \gamma \| \rho \|_{L^\infty} ) + \PP(\left | h \ast \mu_{{\bf Y}} (y_m) - h \ast \rho (y_m) \right | > \gamma \| \rho \|_{L^\infty} ) . \label{DeMorgan}
\ee
We now bound each of the probabilities in the sum.

Observe that, by expanding the convolution and using the definition of the empirical measure $\mu_{{\bf Y}}$, we may rewrite
\be
g \ast \rho (y_m) - g \ast \mu_{{\bf Y}} (y_m) = \EE[g(y_m - Y_1)] - \frac{1}{N} \sum_{i=1}^N g(y_m - Y_i) .
\ee
The random variables $g(y_m - Y_i)$ are independent and bounded by
\be
|g(y_m - Y_i)| \leq \| g \|_{L^\infty} ,
\ee
with
variance
\be
\Var g(y_m - Y_1) = g^2 \ast \rho(y_m) - (g \ast \rho (y_m))^2 \leq \| g^2 \ast \rho \|_{L^\infty} \leq \| g \|_{L^2}^2 \| \rho \|_{L^\infty}.
\ee
We may therefore apply Theorem~\ref{thm:Bernstein}, choosing $\xi = r \gamma \| \rho \|_{L^\infty}$. We obtain, for each $m = 1, \ldots, M$,
\be \label{est:g-prob}
\PP \left ( \left | g \ast \mu_{{\bf Y}} (y_m) - g \ast \rho (y_m) \right | > r \gamma \| \rho \|_{L^\infty} \right ) \lesssim \exp{\left (- N \| \rho \|_{L^\infty}  \frac{r^2 \gamma^2}{2( \| g \|_{L^2}^2 + r \gamma \| g \|_{L^\infty}/3 ) } \right )} .
\ee
Similarly
\be \label{est:h-prob}
\PP \left ( \left | h \ast \mu_{{\bf Y}} (y_m) - h \ast \rho (y_m) \right | > \gamma \| \rho \|_{L^\infty} \right ) \lesssim \exp{\left (- N \| \rho \|_{L^\infty} \frac{ \gamma^2}{2(\| h \|_{L^2}^2 + \gamma \| h \|_{L^\infty}/3) } \right )} .
\ee

Substituting inequalities \eqref{est:g-prob} and \eqref{est:h-prob} into \eqref{DeMorgan}, we conclude that
\begin{align}
\PP(A^c_g) & \lesssim M \left ( \exp{\left (- N \| \rho \|_{L^\infty}  \frac{r^2 \gamma^2}{2(\| g \|_{L^2}^2 + r \gamma \| g \|_{L^\infty}/3) } \right )}  +  \exp{\left (- N \| \rho \|_{L^\infty} \frac{ \gamma^2}{2( \| h \|_{L^2}^2 + \gamma \| h \|_{L^\infty}/3) } \right )}  \right ) \\
& \lesssim (r \delta)^{-d} \left ( \exp{\left (- N \| \rho \|_{L^\infty}  \frac{r^2 \gamma^2}{2(\| g \|_{L^2}^2 + r \gamma \| g \|_{L^\infty}/3 ) } \right )}  +  \exp{\left (- N \| \rho \|_{L^\infty} \frac{ \gamma^2}{2 ( \| h \|_{L^2}^2 + \gamma \| h \|_{L^\infty}/3 ) } \right )}  \right ) .
\end{align}

\end{proof}

\subsection{Law of Large Numbers for Perturbed IID Random Variables}

To conclude this section, we combine Lemma~\ref{lem:gApproxEmpirical} and Proposition~\ref{prop:LLN} to obtain the following convergence result for empirical measures $\mu_{\bf X}$ where the random variables $\{ X_i \}_{i=1}^N$ are not themselves independent, but are uniformly close to an iid $N$-tuple ${ \bf Y }$. Such is the case when we consider the particle system \eqref{eq:ODE-N} in the mean-field limit.

\begin{cor} \label{cor:LLN-Approx}
Let $\rho, {\bf Y}, r$, and the functions $(g,h)$ satisfy the assumptions of Proposition~\ref{prop:LLN}.
Assume further that $h$ has an $r$-local Lipschitz modulus $l \in L^\infty(\TT^d)$.

Let ${\bf X} : \Omega \to (\TT^d)^N$ be a random $N$-tuple of points in $\TT^d$.
For any $\delta, \gamma \in (0,1)$, let $B_g = B_g(\gamma, \delta) \in \cF$ denote the event
\be
B_g : = \Big \{ \| g \ast \mu_{{\bf X}} - g \ast \rho \|_{L^\infty }  \leq 2 \| \rho \|_{L^\infty} \left [ (\gamma + \| h  \|_{L^1} + r \| l  \|_{L^1})  |{\bf X}-{\bf Y}|_\infty +  r \left (  \delta \| h  \|_{L^1} +  \gamma \right ) \right ] \Big \} .
\ee
Then
\begin{multline}
\PP( \{ |{\bf X}-{\bf Y}|_\infty < r  \} \cap B_g^c) \lesssim  (r \delta)^{-d}  \exp{\left (- N \| \rho \|_{L^\infty} \frac{ \gamma^2}{2 ( \| r^{-1} g  \|_{L^2}^2 +  \gamma \| r^{-1 }g \|_{L^\infty}/3 ) } \right )} \\ 
+ (r \delta)^{-d} \left (  \exp{\left (- N \| \rho \|_{L^\infty}  \frac{ \gamma^2}{2( \| h  \|_{L^2}^2 + \gamma \| h \|_{L^\infty} /3 )} \right )} +   \exp{\left (- N \| \rho \|_{L^\infty} \frac{ \gamma^2}{2 ( \| r  l  \|_{L^2}^2 + \gamma \| rl \|_{L^\infty}/3 ) }   \right )} \right ) .
\end{multline} 
\end{cor}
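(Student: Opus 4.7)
The plan is to reduce the bound on $\|g \ast \mu_{\bf X} - g \ast \rho\|_{L^\infty}$ to two invocations of the law of large numbers result (Proposition~\ref{prop:LLN}), one applied to the pair $(g,h)$ and one to the pair $(h,l)$, glued together by the weak--strong stability estimate (Lemma~\ref{lem:gApproxEmpirical}). The starting point is the triangle inequality
\[
\|g \ast \mu_{\bf X} - g \ast \rho\|_{L^\infty} \leq \|g \ast \mu_{\bf X} - g \ast \mu_{\bf Y}\|_{L^\infty} + \|g \ast \mu_{\bf Y} - g \ast \rho\|_{L^\infty}.
\]
The second term is exactly what Proposition~\ref{prop:LLN} controls, when applied to $g$ with modulus $h$ and parameters $(\gamma,\delta)$: outside an event $A_g^c$, it is bounded by $2r\|\rho\|_{L^\infty}(\delta \|h\|_{L^1} + \gamma)$, which supplies the second bracket in the definition of $B_g$.

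For the first term, I would restrict to the event $\{|{\bf X} - {\bf Y}|_\infty < r\}$ and apply Lemma~\ref{lem:gApproxEmpirical} with $p=\infty$, yielding
\[
\|g \ast \mu_{\bf X} - g \ast \mu_{\bf Y}\|_{L^\infty} \leq \|h \ast \mu_{\bf Y}\|_{L^\infty} \, |{\bf X}-{\bf Y}|_\infty .
\]
To control $\|h \ast \mu_{\bf Y}\|_{L^\infty}$ I would use another triangle inequality, $\|h \ast \mu_{\bf Y}\|_{L^\infty} \leq \|h\|_{L^1}\|\rho\|_{L^\infty} + \|h \ast \mu_{\bf Y} - h \ast \rho\|_{L^\infty}$, and then apply Proposition~\ref{prop:LLN} a second time to $h$ with local Lipschitz modulus $l$. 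The delicate point is the choice of parameters: the coefficient of $|{\bf X}-{\bf Y}|_\infty$ in the statement of $B_g$ has the form $\gamma + \|h\|_{L^1} + r\|l\|_{L^1}$, which forces one to use parameters $(\tilde\gamma,\tilde\delta) = (\gamma/r,\delta)$ so that the output $2r\|\rho\|_{L^\infty}(\tilde\delta \|l\|_{L^1} + \tilde\gamma) = 2r\delta\|\rho\|_{L^\infty}\|l\|_{L^1} + 2\gamma\|\rho\|_{L^\infty}$ combines correctly (using $\delta \in (0,1)$) with the $\|h\|_{L^1}\|\rho\|_{L^\infty}$ term to give at most $2\|\rho\|_{L^\infty}(\gamma + \|h\|_{L^1} + r\|l\|_{L^1})$.

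Assembling these bounds on the intersection of the two good events produces exactly the inequality defining $B_g$, so $\{|{\bf X}-{\bf Y}|_\infty < r\} \cap B_g^c$ is contained in the union of the two bad events. A final union bound then controls the probability: Proposition~\ref{prop:LLN} applied to $(g,h)$ with $(\gamma,\delta)$ contributes the first exponential (via the identity $r^2\gamma^2/(\|g\|_{L^2}^2 + r\gamma\|g\|_{L^\infty}/3) = \gamma^2/(\|r^{-1}g\|_{L^2}^2 + \gamma\|r^{-1}g\|_{L^\infty}/3)$) and one copy of the second exponential; applied to $(h,l)$ with $(\gamma/r,\delta)$ it contributes a second copy of the middle exponential together with the $\|rl\|$ exponential (using $\|rl\|_{L^2}^2 = r^2\|l\|_{L^2}^2$ and $\|rl\|_{L^\infty} = r\|l\|_{L^\infty}$). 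The main technical point, and where I expect the only real care is needed, is precisely this bookkeeping of parameters $(\gamma/r,\delta)$ in the second application, since getting the correct scaling is what causes the $\|rl\|$-exponent to appear and what ensures the prefactor $(r\delta)^{-d}$ is preserved.
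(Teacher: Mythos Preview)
Your proposal is correct and follows essentially the same route as the paper: triangle inequality, Lemma~\ref{lem:gApproxEmpirical} on $\{|{\bf X}-{\bf Y}|_\infty < r\}$, then two applications of Proposition~\ref{prop:LLN} (to $(g,h)$ and to $(h,l)$), followed by a union bound.

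The one genuine difference is your choice of parameters for the second application. The paper applies Proposition~\ref{prop:LLN} to $(h,l)$ with the \emph{same} pair $(\gamma,\delta)$, obtaining on $A_h(\gamma,\delta)$ the bound $\|h\ast\mu_{\bf Y}-h\ast\rho\|_{L^\infty}\le 2r\|\rho\|_{L^\infty}(\delta\|l\|_{L^1}+\gamma)$, and then uses $r<1$ and $\delta<1$ to absorb this into the coefficient $(\gamma+\|h\|_{L^1}+r\|l\|_{L^1})$. You instead take $(\tilde\gamma,\tilde\delta)=(\gamma/r,\delta)$. Both choices produce the event inclusion $\{|{\bf X}-{\bf Y}|_\infty<r\}\cap A_g\cap A_h\subset B_g$; the difference shows up only in the probability estimate for $A_h^c$. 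With the paper's choice, $\PP(A_h^c)$ contributes a term of the form $\exp\bigl(-N\|\rho\|_{L^\infty}\gamma^2/\bigl(2(\|r^{-1}h\|_{L^2}^2+\gamma\|r^{-1}h\|_{L^\infty}/3)\bigr)\bigr)$, which is not literally dominated by the $h$-exponential in the stated bound. Your rescaled choice $\tilde\gamma=\gamma/r$ (permitted since Proposition~\ref{prop:LLN} only requires $\gamma>0$) yields exactly the two exponentials with $h$ and $rl$ appearing in the statement. So your bookkeeping is in fact tighter and reproduces the displayed inequality on the nose; for the downstream application in Proposition~\ref{prop:Field} the distinction is immaterial, since for the specific kernels $K_r,L_r,Q_r,\chi_r,\psi_r,\eta_r$ all these exponentials are of the same order.
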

\begin{proof}
Apply the $L^\infty$ triangle inequality to obtain
\be
\| g \ast \mu_{{\bf X}} - g \ast \rho \|_{L^\infty } \leq \| g \ast \mu_{{\bf X}} - g \ast \mu_{{\bf Y}}\|_{L^\infty } + \| g \ast \mu_{{\bf Y}} - g \ast \rho \|_{L^\infty }.
\ee
On the event $\{ |{\bf X}-{\bf Y}|_\infty < r  \}$, by Lemma~\ref{lem:gApproxEmpirical},
\be
 \| g \ast \mu_{{\bf X}} - g \ast \mu_{{\bf Y}}\|_{L^\infty } \leq \| h \ast \mu_{{\bf Y}} \|_{L^\infty} |{\bf X}-{\bf Y}|_\infty .
\ee
We approximate $h \ast \mu_{{\bf Y}}$ by $h \ast \rho$, writing
\be
 \| g \ast \mu_{{\bf X}} - g \ast \mu_{{\bf Y}}\|_{L^\infty } \leq \left ( \| h \ast \rho \|_{L^\infty} + \| h \ast \mu_{{\bf Y}} - h \ast \rho \|_{L^\infty } \right )|{\bf X}-{\bf Y}|_\infty .
\ee
Hence, applying Young's inequality for convolutions, we deduce that
\be
\| g \ast \mu_{{\bf X}} - g \ast \rho \|_{L^\infty } \leq \| h \|_{L^1} \| \rho \|_{L^\infty} |{\bf X}-{\bf Y}|_\infty + \| g \ast \mu_{{\bf Y}} - g \ast \rho \|_{L^\infty } +  \| h \ast \mu_{{\bf Y}} - h \ast \rho \|_{L^\infty } |{\bf X}-{\bf Y}|_\infty .
\ee

Now consider the event $A_g(\gamma, \delta) \cap A_h(\gamma, \delta)$, in the notation of Proposition~\ref{prop:LLN}, upon which
\begin{align}
\| g \ast \mu_{{\bf Y}} - g \ast \rho \|_{L^\infty } & \leq 2 r  \| \rho \|_{L^\infty} (\delta \| h  \|_{L^1} + \gamma) \\
\| h \ast \mu_{{\bf Y}} - h \ast \rho \|_{L^\infty } & \leq 2 r \| \rho \|_{L^\infty} (\delta \| l  \|_{L^1} + \gamma) .
\end{align}
Hence, on the event $\{ |{\bf X}-{\bf Y}|_\infty < r  \} \cap A_g(\gamma, \delta) \cap A_h(\gamma, \delta)$ we have
\begin{align}
\| g \ast \mu_{{\bf X}} - g \ast \rho \|_{L^\infty } & \leq \| h \|_{L^1} \| \rho \|_{L^\infty} |{\bf X}-{\bf Y}|_\infty + 2 r  \| \rho \|_{L^\infty} (\delta \| h  \|_{L^1} + \gamma) + 2 r \| \rho \|_{L^\infty} (\delta \| l  \|_{L^1} + \gamma)  |{\bf X}-{\bf Y}|_\infty \\
& \leq 2 \| \rho \|_{L^\infty} \left [ ( \| h \|_{L^1} + r \| l  \|_{L^1} + \gamma )  |{\bf X}-{\bf Y}|_\infty +  r \left (  \delta \| h \|_{L^1} +  \gamma \right ) \right ] .
\end{align}
Thus $\{ |{\bf X}-{\bf Y}|_\infty < r  \} \cap A_g(\gamma, \delta) \cap A_h(\gamma, \delta) \subset B_g$.

Therefore, $\{ |{\bf X}-{\bf Y}|_\infty < r  \} \cap B_g^c \subset A_g(\gamma, \delta)^c \cap A_h(\gamma, \delta)^c$, and
we may estimate
\be
\PP(\{ |{\bf X}-{\bf Y}|_\infty < r  \} \cap B^c_g) \leq \PP(A_g^c) + \PP(A_h^c) .
\ee
The conclusion then follows from Proposition~\ref{prop:LLN}.

\end{proof}

\section{Mean-Field Limit for Ionic Vlasov-Poisson}
\label{sec:MFL}

\subsection{Nonlinear Kinetic `Distance'} \label{sec:KineticDistance}

In this section, we present the proof of Theorem~\ref{thm:main}. The goal is to compare the solutions $({\bf X}^N, {\bf V}^N)$ and $({\bf Y}^N, {\bf W}^N)$ of the ODEs
\be \label{eq:CompareODE}
\begin{cases}
\dot X_i^N = V_i^N \\
\dot V_i^N = - 
\nabla_x \Phi_r [\mu_{{\bf X}}] (X_i^N) ,
\end{cases}
\qquad
\begin{cases}
\dot Y_i^N = W_i^N \\
\dot W_i^N = - 
\nabla_x \Phi_r [ \rho_{f_r}] (Y_i^N ) ,
\end{cases}
\quad i \in \{ 1 , \ldots, N \} .
\ee
A technical difficulty in obtaining these estimates is that the (uniform in $r$) regularity estimates on the electric field are of \emph{log-Lipschitz} rather than Lipschitz type (recall Proposition~\ref{prop:potential}). 
Previous works \cite{LazaroviciPickl, Lazarovici} on the electron model used the regularisation (or truncation) of the kernel $K_r$ to obtain that the regularised electric field is Lipschitz with Lipschitz constant diverging at the rate $|\log r|$. By combining this with an anisotropically weighted distance, e.g. of the form
\be
|\log r |^{1/2} | {\bf X}^N - {\bf Y}^N |_\infty + |{\bf V}^N - {\bf W}^N |_\infty ,
\ee
it was possible to obtain a Gr\"{o}nwall-type estimate with constant diverging at a rate slower than any algebraic rate in $r$.
A similar approach was used for the ion model in \cite{GPIMFQN}, by employing a doubled regularisation method so that the field was of the form $K_r \ast (\rho_{f_r} - e^{\Phi_r[\rho_{f_r}]})$.

In the particle system \eqref{eq:CompareODE}, the regularised interaction potentials $\Phi_r$ solve equations of type
\be
- \Delta \Phi_r[\mu] = \chi_r \ast \mu - e^{\Phi_r[\mu]}
\ee
and are therefore not of the form of a convolution with $K_r$.
Thus, to use a similar approach we would need more involved estimates on $e^{\Phi_r[\mu]}$ in order to obtain Lipschitz bounds on $\nabla \Phi_r[\mu]$ diverging at the correct rate.
In the present work we avoid this by using the log-Lipschitz estimates directly. 
The doubled regularisation is then no longer required, although our results would also hold in this case.

We make use of a technique developed by Iacobelli \cite{Iacobelli} for the purpose of obtaining sharper quantitative Wasserstein stability estimates for kinetic transport equations with non-Lipschitz force fields.
The idea is to control the distance between the particle systems using an anisotropically weighted functional where the weight \emph{depends nonlinearly upon the quantity itself}.

\begin{definition} \label{def:J}
Let $0 < r, \alpha_0, \beta_0 < \frac{1}{3e}$ be given. $J : [0, + \infty) \to [0, + \infty)$ is a function satisfying
\be \label{JProperty}
J(t) : = (r + \alpha_0 + \beta_0 ) \wedge \left (  \sqrt{|\log J(t) |} \, \left (  \|{\bf X}^N - {\bf Y}^N \|_{L^\infty(0,t)} + \alpha_0 \right )+   \|{\bf V}^N - {\bf W}^N \|_{L^\infty(0,t)}  + \beta_0  \right ) .
\ee 

\end{definition}

Here $r$ denotes the regularisation parameter as before, while $\alpha_0$ and $\beta_0$ will be chosen later.
We will defer the proof that $J$ exists until later in the subsection. We first note the following properties of $J$, which show that $J$ indeed provides upper bounds for $\|{\bf X}^N - {\bf Y}^N \|_{L^\infty(0,t)} $ and $\|{\bf V}^N - {\bf W}^N \|_{L^\infty(0,t)} $. Moreover, as in \cite{BoersPickl, LazaroviciPickl}, by truncating $J$ at $r + \alpha_0 + \beta_0$ we ensure that $J$ is non-trivial only if $\|{\bf X}^N - {\bf Y}^N \|_{L^\infty(0,t)} $ is below the threshold $r$; when this holds, the estimates between empirical measures from Section~\ref{sec:LLN} using the $r$-local Lipschitz bounds can be used.

\begin{lemma} \label{lem:Jbasics}
If $J$ satisfies \eqref{JProperty}, then:
\begin{enumerate}[(i)]
\item $J \leq r + \alpha_0 + \beta_0 < \frac{1}{e}$ \label{item:J-UB}
\item $|\log J| \geq 1$ \label{item:Jlog}
\item If $J < r + \alpha_0 + \beta_0$, then the following inequalities hold: \label{item:Jineqs}
\begin{align}
&\|{\bf X}^N - {\bf Y}^N \|_{L^\infty(0,t)} +  \|{\bf V}^N - {\bf W}^N \|_{L^\infty(0,t)} < r \, ; \\
&\|{\bf X}^N - {\bf Y}^N \|_{L^\infty(0,t)}  + \alpha_0  \leq \frac{J}{|\log J|^{1/2}} \, ; \\
&\|{\bf V}^N - {\bf W}^N \|_{L^\infty(0,t)}  + \beta_0 \leq J .
\end{align}

\end{enumerate}
\end{lemma}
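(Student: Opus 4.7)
The plan is to derive the three items directly from the defining property \eqref{JProperty}, since $J$ is taken as given. The core observation is that when $J$ undershoots the first argument of the minimum strictly, the minimum must be attained by the second argument, turning the inequality $J \leq \ldots$ into the equation
\begin{equation}
J(t) = \sqrt{|\log J(t)|}\,\bigl(\|{\bf X}^N - {\bf Y}^N\|_{L^\infty(0,t)} + \alpha_0\bigr) + \|{\bf V}^N - {\bf W}^N\|_{L^\infty(0,t)} + \beta_0.
\end{equation}
All three statements then follow by elementary manipulations.

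For item~\eqref{item:J-UB}, the minimum in \eqref{JProperty} gives $J(t) \leq r + \alpha_0 + \beta_0$ for free, and the hypothesis $0 < r, \alpha_0, \beta_0 < \tfrac{1}{3e}$ gives $r + \alpha_0 + \beta_0 < \tfrac{1}{e}$. Item~\eqref{item:Jlog} is an immediate consequence: $J \leq \tfrac{1}{e} < 1$ forces $|\log J| = -\log J \geq 1$.

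For item~\eqref{item:Jineqs}, I would first note that the assumption $J < r + \alpha_0 + \beta_0$ rules out the first branch of the minimum in \eqref{JProperty}, so the equation displayed above holds. Both summands on the right-hand side are nonnegative, which immediately yields the bounds
\begin{equation}
\|{\bf V}^N - {\bf W}^N\|_{L^\infty(0,t)} + \beta_0 \leq J, \qquad \sqrt{|\log J|}\,\bigl(\|{\bf X}^N - {\bf Y}^N\|_{L^\infty(0,t)} + \alpha_0\bigr) \leq J,
\end{equation}
giving the second and third inequalities of~\eqref{item:Jineqs} (for the second one after dividing by $\sqrt{|\log J|}$, which is valid by~\eqref{item:Jlog}). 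For the first inequality, I would use $\sqrt{|\log J|} \geq 1$ from~\eqref{item:Jlog} to bound
\begin{equation}
\bigl(\|{\bf X}^N - {\bf Y}^N\|_{L^\infty(0,t)} + \alpha_0\bigr) + \bigl(\|{\bf V}^N - {\bf W}^N\|_{L^\infty(0,t)} + \beta_0\bigr) \leq \sqrt{|\log J|}\,\bigl(\|{\bf X}^N - {\bf Y}^N\|_{L^\infty(0,t)} + \alpha_0\bigr) + \|{\bf V}^N - {\bf W}^N\|_{L^\infty(0,t)} + \beta_0 = J < r + \alpha_0 + \beta_0,
\end{equation}
and cancel $\alpha_0 + \beta_0$ on both sides to get $\|{\bf X}^N - {\bf Y}^N\|_{L^\infty(0,t)} + \|{\bf V}^N - {\bf W}^N\|_{L^\infty(0,t)} < r$.

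There is no real obstacle here: the whole proof is a consequence of (a) the strict version of the minimum identity and (b) the inequality $\sqrt{|\log J|} \geq 1$, itself coming from the bound $r + \alpha_0 + \beta_0 < 1/e$ (which motivates the specific upper limit $\tfrac{1}{3e}$ in the hypotheses on $r, \alpha_0, \beta_0$). The content of the lemma is really structural, clarifying how the anisotropic weight $\sqrt{|\log J|}$ enforces the position gap to be smaller than the velocity gap by a logarithmic factor, and how the truncation at $r + \alpha_0 + \beta_0$ guarantees that whenever $J$ is in its non-trivial regime, the position discrepancy lies below the cut-off scale $r$ needed to apply the local Lipschitz estimates from Section~\ref{sec:LLN}.
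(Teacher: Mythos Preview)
Your proof is correct and follows essentially the same approach as the paper: both arguments use the minimum in \eqref{JProperty} for item~\eqref{item:J-UB}, deduce~\eqref{item:Jlog} from $J < \tfrac{1}{e}$, and for~\eqref{item:Jineqs} exploit that $J < r + \alpha_0 + \beta_0$ forces equality with the second branch, then drop nonnegative terms and use $\sqrt{|\log J|} \geq 1$ to obtain the three inequalities.
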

\begin{proof}
\eqref{item:J-UB} follows immediately from the definition of minimum and the assumption that $r, \alpha_0, \beta_0 < \frac{1}{3e}$.
Then, since $J < \frac{1}{e} < 1$, 
\be
|\log J| = - \log J > - \log \frac{1}{e} = 1 ,
\ee
and \eqref{item:Jlog} follows.

For \eqref{item:Jineqs}, note that when $J < r + \alpha_0 + \beta_0$,
\be
J = \sqrt{|\log J |} \, \left ( \|{\bf X}^N - {\bf Y}^N \|_{L^\infty(0,t)} + \alpha_0 \right )+  \|{\bf V}^N - {\bf W}^N \|_{L^\infty(0,t)} + \beta_0 .
\ee
Hence we obtain
\be
\|{\bf V}^N - {\bf W}^N \|_{L^\infty(0,t)}  + \beta_0 \leq J, \quad  \|{\bf X}^N - {\bf Y}^N \|_{L^\infty(0,t)} + \alpha_0  \leq J |\log J|^{-1/2}
\ee
as required.

Finally, \eqref{item:Jlog} implies that
\be
\|{\bf X}^N - {\bf Y}^N \|_{L^\infty(0,t)}  +\|{\bf V}^N - {\bf W}^N \|_{L^\infty(0,t)}  + \alpha_0 + \beta_0 \leq J < r + \alpha_0 + \beta_0 .
\ee
Hence
\be
\|{\bf X}^N - {\bf Y}^N \|_{L^\infty(0,t)} + \|{\bf V}^N - {\bf W}^N \|_{L^\infty(0,t)}  < r,
\ee
and the conclusion follows.

\end{proof}

In the remainder of this subsection, we establish the existence of $J$, and show that it is Lipschitz continuous as a function of $t$.
This can be done using the methods of \cite[Lemma 3.7]{Iacobelli}. First, we note the following lemma, which is a consequence of the implicit function theorem.

\begin{lemma} \label{lem:IFT}
Let $F: (0, 1) \times (0,1) \times (0,1) \to \R$ be the function
\be
F(a,b ,j) = j - a |\log j|^{1/2} - b .
\ee
For all $a \in (0,1)$ and $b \in (0,\frac{1}{e})$, there exists a unique $j_\ast \in (0, 1)$ such that $F(a,b, j_\ast) = 0$. Moreover, there exists a $C^1$ function $I : (0,1) \times (0,1) \to (0,1)$ such that $j_\ast = I(a,b)$.
\end{lemma}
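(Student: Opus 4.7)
The plan is to reduce the lemma to a direct application of the intermediate value theorem and the implicit function theorem, exploiting the strict monotonicity of $F$ in its third argument. I would first fix $(a,b) \in (0,1) \times (0,1/e)$ and study $j \mapsto F(a,b,j)$ on $(0,1)$ to extract the required qualitative properties.

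For strict monotonicity, I would compute
\[
\partial_j F(a,b,j) \;=\; 1 \;-\; a \cdot \frac{d}{dj}\!\bigl(-\log j\bigr)^{1/2} \;=\; 1 \;+\; \frac{a}{2 j\,|\log j|^{1/2}},
\]
which is strictly positive for every $j \in (0,1)$ and every $a \in (0,1)$. Hence $F(a,b,\cdot)$ is strictly increasing on $(0,1)$, which will immediately yield uniqueness. For existence I would examine the boundary behaviour: as $j \to 0^+$ one has $|\log j|^{1/2} \to +\infty$, so $F(a,b,j) \to -\infty$; as $j \to 1^-$, $|\log j|^{1/2} \to 0$, so $F(a,b,j) \to 1 - b$, which is strictly positive since $b < 1/e < 1$. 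The intermediate value theorem then produces a unique $j_\ast = j_\ast(a,b) \in (0,1)$ with $F(a,b,j_\ast) = 0$.

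For the $C^1$ regularity, I would apply the implicit function theorem to $F$, viewed as a $C^1$ map on the open set $(0,1) \times (0,1) \times (0,1)$ (smoothness here comes from the fact that $|\log j|^{1/2} = (-\log j)^{1/2}$ is smooth away from $j=1$, and the unique root lies in $(0,1)$, bounded away from $1$ by the computation of the previous paragraph which in fact shows $F(a,b,1-\eta) > 0$ for small $\eta$). Since $\partial_j F(a,b,j_\ast) > 0$, the implicit function theorem produces a neighbourhood of $(a,b)$ on which the equation $F(a,b,j)=0$ is solved by a unique $C^1$ function $I$, and gluing these local solutions gives the global $C^1$ function $I:(0,1)\times(0,1/e) \to (0,1)$ with $j_\ast = I(a,b)$.

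I do not anticipate any real obstacle here: the lemma is essentially a bookkeeping step packaging the implicit function theorem into a form usable in the sequel, and the only point requiring mild care is checking that the root $j_\ast$ stays in the interior of $(0,1)$ (so that the composite $(-\log j)^{1/2}$ remains smooth at $j_\ast$); this is guaranteed by $b < 1/e < 1$ and the behaviour of $F$ near the endpoints described above.
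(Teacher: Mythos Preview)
Your proposal is correct and matches the paper's approach: the paper does not give a detailed proof of this lemma, stating only that it is a consequence of the implicit function theorem, and your argument (strict monotonicity via $\partial_j F > 0$, existence via the intermediate value theorem using the endpoint behaviour, then the implicit function theorem for $C^1$ dependence) is exactly the intended justification. One minor remark: your argument in fact works for all $b \in (0,1)$, not just $b \in (0,1/e)$, since you only use $1 - b > 0$; this covers the full domain $(0,1)\times(0,1)$ claimed for $I$ in the statement.
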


Using Lemma~\ref{lem:IFT} we may deduce that the quantity $J$ exists and is differentiable almost everywhere with respect to $t$.

\begin{lemma} \label{lem:J_Lip}
There exists a Lipschitz function $J: [0, +\infty) \to (0,r + \alpha_0 + \beta_0)$ satisfying Definition~\ref{def:J}.

\end{lemma}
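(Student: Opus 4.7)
The natural approach is to build $J$ from the implicit function $I$ of Lemma~\ref{lem:IFT}, combined with a truncation that activates when the fixed-point value would exceed $r + \alpha_0 + \beta_0$. To this end, I would define
\[ a(t) := \|{\bf X}^N - {\bf Y}^N\|_{L^\infty(0,t)} + \alpha_0, \qquad b(t) := \|{\bf V}^N - {\bf W}^N\|_{L^\infty(0,t)} + \beta_0, \]
so that Definition~\ref{def:J} rewrites as $J(t) = (r+\alpha_0+\beta_0) \wedge \bigl(a(t)\sqrt{|\log J(t)|} + b(t)\bigr)$. The first step is to verify that $a, b$ are Lipschitz in $t$ on any bounded interval. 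This holds because the right-hand sides of the ODE systems \eqref{eq:ODE-N} and \eqref{eq:ODE-aux} are controlled by $\|\nabla \Phi_r\|_\infty$, which is itself uniformly bounded on $[0, T]$ by Proposition~\ref{prop:potential} applied to the source densities $\chi_r \ast \mu_{{\bf X}^N}$ and $\chi_r \ast \rho_{f_r}$ (both lie in $L^\infty$ with norms controlled locally uniformly in $t$). Thus $X_i^N, Y_i^N, V_i^N, W_i^N$ are Lipschitz in time, and the torus metric, the maximum over $i \in \{1, \ldots, N\}$, and the running supremum all preserve this property.

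Next, I would use the monotonicity of the function $F(a,b,j) = j - a\sqrt{|\log j|} - b$ in $j$ on $(0, 1)$ (whose $j$-derivative $1 + a/(2j\sqrt{-\log j})$ is strictly positive) to identify when the truncation is active. The zero $j_\ast = I(a,b)$ satisfies $I(a,b) < r+\alpha_0+\beta_0$ if and only if $F(a, b, r+\alpha_0+\beta_0) > 0$, i.e., precisely when
\[ a \sqrt{|\log(r+\alpha_0+\beta_0)|} + b < r + \alpha_0 + \beta_0 ; \]
call this condition $(\star)$. Since $r+\alpha_0+\beta_0 < 1/e$ gives $\sqrt{|\log(r+\alpha_0+\beta_0)|} > 1$, condition $(\star)$ automatically forces $a < 1$ and $b < 1/e$, placing $(a,b)$ inside the domain of $I$. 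Letting $\Omega_\star \subset [0, +\infty)$ be the open set on which $(\star)$ holds at time $t$, I would set $J(t) := I(a(t), b(t))$ on $\Omega_\star$ and $J(t) := r+\alpha_0+\beta_0$ elsewhere. On $\Omega_\star$ this realises the second entry of the minimum in \eqref{JProperty}, and $J$ satisfies the defining identity $J = a\sqrt{|\log J|} + b$; off $\Omega_\star$ the first entry is the smaller.

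For Lipschitz continuity, $I \in C^1$ has a Lipschitz constant bounded uniformly on any compact subset of $(0,1) \times (0,1/e)$, so combined with the Lipschitz bounds on $a, b$ this gives Lipschitz continuity of $J$ on $\Omega_\star$; off $\Omega_\star$, $J$ is constant. I expect the main obstacle to be the matching at the boundary $\partial \Omega_\star$, where I must verify that $I(a(t), b(t)) \to r+\alpha_0+\beta_0$ as $t$ approaches $\partial \Omega_\star$ from within $\Omega_\star$. This follows from continuity of $I$ and the fact that equality in $(\star)$ is equivalent to $F(a,b,r+\alpha_0+\beta_0) = 0$, which forces $I(a,b) = r+\alpha_0+\beta_0$. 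A standard gluing argument then yields global Lipschitz continuity of $J$ on any $[0, T']$.
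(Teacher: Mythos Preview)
Your proof is correct and follows essentially the same approach as the paper: build $J$ by composing the $C^1$ implicit function $I$ from Lemma~\ref{lem:IFT} with the Lipschitz paths $a(t),b(t)$, then truncate at $r+\alpha_0+\beta_0$ and glue. Your direct characterisation of the truncation region via the condition $(\star)$ is marginally cleaner than the paper's two-step treatment (which introduces separate thresholds $t'$ and $t_\ast$ and then checks $t_\ast<t'$), but the substance is identical.
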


\begin{proof}
Let $I$ be the implicit function defined in Lemma~\ref{lem:IFT}. Let
\be 
J : = \begin{cases}
(r + \alpha_0 + \beta_0) \wedge I \left (\|{\bf X}^N- {\bf Y}^N\|_{L^\infty(0,t)}  + \alpha_0, \|{\bf V}^N- {\bf W}^N\|_{L^\infty(0,t)} + \beta_0 \right ) & \text{if }  \|{\bf X}^N- {\bf Y}^N\|_{L^\infty(0,t)} , \|{\bf V}^N- {\bf W}^N\|_{L^\infty(0,t)}  \leq r \\
r + \alpha_0 + \beta_0 & \text{otherwise.}
\end{cases}
\ee
By Lemma~\ref{lem:IFT}, this is well defined, since $\|{\bf X}^N- {\bf Y}^N\|_{L^\infty(0,t)}  + \alpha_0 < 1$ and $\|{\bf V}^N- {\bf W}^N\|_{L^\infty(0,t)}  + \beta_0 < \frac{1}{e}$ when ${\|{\bf X}^N- {\bf Y}^N\|_{L^\infty(0,t)} }$, ${\|{\bf V}^N- {\bf W}^N\|_{L^\infty(0,t)}} \leq r < \frac{1}{e}$.
$J$ clearly satisfies Definition~\ref{def:J} when $\|{\bf X}^N- {\bf Y}^N\|_{L^\infty(0,t)} , \|{\bf V}^N- {\bf W}^N\|_{L^\infty(0,t)}  \leq r$, by the definition of $I$. However, by Lemma~\ref{lem:Jbasics}, if $\|{\bf X}^N- {\bf Y}^N\|_{L^\infty(0,t)}  > r$ or $\|{\bf V}^N- {\bf W}^N\|_{L^\infty(0,t)}  > r$ then property \eqref{JProperty} is merely the statement that $J = r + \alpha_0 + \beta_0$. Hence $J$ satisfies Definition~\ref{def:J}.

Next, we show that $J$ is Lipschitz as a function of time. Both of the functions $t \mapsto |{\bf X}^N(t) - {\bf Y}^N(t) |_{\infty}$ and $t \mapsto |{\bf V}^N(t) - {\bf W}(t) |_{\infty} $ are Lipschitz, each being the maximum of a finite number of Lipschitz functions.
Then the supremum functions $t \mapsto \|{\bf X}^N- {\bf Y}^N\|_{L^\infty(0,t)}$ and $t \mapsto \|{\bf V}^N- {\bf W}^N\|_{L^\infty(0,t)} $ are also Lipschitz.

Let 
\be
t ' : = \sup \{ t \geq 0 : \|{\bf X}^N- {\bf Y}^N\|_{L^\infty(0,t)} + \|{\bf V}^N- {\bf W}^N\|_{L^\infty(0,t)}  \leq r \} .
\ee
Then $I \left ( \|{\bf X}^N- {\bf Y}^N\|_{L^\infty(0,t)} + \alpha_0, \|{\bf V}^N- {\bf W}^N\|_{L^\infty(0,t)}  + \beta_0 \right )$ is Lipschitz for $t \leq t'$, as the composition of a $C^1$ function with a Lipschitz function.

It follows that $I \left ( \|{\bf X}^N- {\bf Y}^N\|_{L^\infty(0,t)} + \alpha_0, \|{\bf V}^N- {\bf W}^N\|_{L^\infty(0,t)}  + \beta_0 \right )$  is Lipschitz for $t$ such that $\|{\bf X}^N- {\bf Y}^N\|_{L^\infty(0,t)} + \|{\bf V}^N- {\bf W}^N\|_{L^\infty(0,t)}  < r$, since it is the composition of a $C^1$ function with a Lipschitz function. The truncation occurs at the time $t_\ast$, defined by
\be
t_\ast : = \inf \{ t \geq 0 :  I \left (\|{\bf X}^N- {\bf Y}^N\|_{L^\infty(0,t)}  + \alpha_0, \|{\bf V}^N- {\bf W}^N\|_{L^\infty(0,t)} + \beta_0 \right )  \geq r + \alpha_0 + \beta_0 \} ,
\ee
at which point
\be
J(t_\ast) = r + \alpha_0 + \beta_0 = \left (  \sqrt{|\log J(t_\ast) |} \, \left (\alpha_0 +  \|{\bf X}^N- {\bf Y}^N\|_{L^\infty(0,t_\ast)}  \right )+   \|{\bf V}^N- {\bf W}^N\|_{L^\infty(0,t_\ast)}  + \beta_0  \right ).
\ee
Since $|\log J(t_\ast) | > 1$, we have
\be
  \|{\bf X}^N- {\bf Y}^N\|_{L^\infty(0,t_\ast)}  +   \|{\bf V}^N- {\bf W}^N\|_{L^\infty(0,t_\ast)} < r,
\ee
and hence $t_\ast < t'$. Thus, for $t \in [0, t']$, $J(t)$ is a maximum of Lipschitz functions and therefore Lipschitz. Moreover $J(t) = r + \alpha_0 + \beta_0$ for all $t \in [t_\ast, t']$, so that the extension $J(t) = r + \alpha_0 + \beta_0$ for $t > t '$ defines a Lipschitz function for all $t$.

\end{proof}

\subsection{Dynamical Estimate}

Having constructed the quantity $J$, we now turn to estimating it via a Gr{\"{o}}nwall-type argument. 

\begin{paragraph}{Note on Constants.}
In this subsection, many generic constants $C>0$ depend on the dimension $d$ and choice of mollifier $\chi$. Since $d$ and $\chi$ are fixed throughout the argument, in order to lighten the notation we will not indicate this dependence explicitly with the notations $C_{d, \chi}$, $\lesssim_{d, \chi}$ etc., instead abbreviating these to $C$, $\lesssim$.

\end{paragraph}

\subsubsection{A First Integral Inequality for $J$}

\begin{lemma} \label{lem:J_integral}
Let $J$ be as in Definition~\ref{def:J}. Then
\be
- |\log J(t)|^{1/2} \leq - | \log J (0) |^{1/2} + \frac{1}{2}  \int_0^t \left ( 1 + \frac{ \max_i | E^{{\bf X}}(s, X_i^N(s)) - E^f(s, Y_i^N(s))|}{J(s) | \log J (s) |^{1/2} } \right )  \one_{\{ J(s) < r + \alpha_0 + \beta_0 \}}  \dd s .
\ee
\end{lemma}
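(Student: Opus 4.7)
The plan is to interpret Definition~\ref{def:J} as an implicit relation, differentiate it in time to get a pointwise bound on $\dot J$, and then integrate. Since $J$ is Lipschitz by Lemma~\ref{lem:J_Lip} and takes values in $(0, r + \alpha_0 + \beta_0] \subset (0, 1/e)$, the composition $t \mapsto -|\log J(t)|^{1/2}$ is also Lipschitz, and a direct computation (using $|\log J| = -\log J$) gives
\[
\frac{d}{dt}\left(-|\log J|^{1/2}\right) = \frac{\dot J}{2 J |\log J|^{1/2}} \qquad \text{a.e.}
\]
By the fundamental theorem of calculus, it therefore suffices to show that the right-hand side is bounded above, for a.e.\ $s \in [0,t]$, by the claimed integrand $\tfrac{1}{2}\bigl(1 + \max_i |E^{\bf X}(s,X_i^N) - E^f(s,Y_i^N)| / (J |\log J|^{1/2})\bigr) \one_{\{J < r + \alpha_0 + \beta_0\}}$.

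Next I would split into two cases according to the minimum in Definition~\ref{def:J}. On the set $\{J = r + \alpha_0 + \beta_0\}$, $J$ is constant at its upper threshold, so $\dot J = 0$ a.e.\ there and both sides of the inequality vanish. On the complementary set $\{J < r + \alpha_0 + \beta_0\}$, the definition reduces to the equality
\[
J = A\, |\log J|^{1/2} + B, \qquad A(s) := \|{\bf X}^N - {\bf Y}^N\|_{L^\infty(0,s)} + \alpha_0, \quad B(s) := \|{\bf V}^N - {\bf W}^N\|_{L^\infty(0,s)} + \beta_0.
\]
Differentiating implicitly in $s$ and using $\frac{d}{ds}|\log J| = -\dot J/J$ gives
\[
\dot J \left(1 + \frac{A}{2 J |\log J|^{1/2}}\right) = \dot A\, |\log J|^{1/2} + \dot B.
\]
Since $A$ and $B$ are non-decreasing, both $\dot A,\dot B \geq 0$, and the factor in brackets on the left is at least $1$, whence $\dot J \leq \dot A\, |\log J|^{1/2} + \dot B$.

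It then remains to control $\dot A$ and $\dot B$ via the characteristic ODEs \eqref{eq:CompareODE}. As $A$ is the running supremum of a finite maximum of Lipschitz functions, standard arguments yield $\dot A(s) \leq \max_i |V_i^N(s) - W_i^N(s)| \leq B(s) \leq J(s)$ a.e., where the last bound uses Lemma~\ref{lem:Jbasics}\eqref{item:Jineqs} on $\{J < r + \alpha_0 + \beta_0\}$. Similarly $\dot B(s) \leq \max_i |\dot V_i^N(s) - \dot W_i^N(s)| = \max_i |E^{\bf X}(s, X_i^N(s)) - E^f(s, Y_i^N(s))|$ directly from \eqref{eq:CompareODE}. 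Substituting the two bounds into the inequality for $\dot J$ and dividing by $2 J |\log J|^{1/2}$ produces exactly the claimed integrand, and integrating from $0$ to $t$ delivers the lemma. The only mildly delicate step I anticipate is the differentiation of the running-supremum functions: at times of strict increase the supremum is attained at the current time, so the bound $\dot A \leq \max_i |V_i^N - W_i^N|$ holds; elsewhere the derivative vanishes a.e. The same remark handles the claim $\dot J = 0$ a.e.\ on the saturation set, both being standard consequences of the fact that a Lipschitz function has zero derivative a.e.\ on any set where it is locally constant.
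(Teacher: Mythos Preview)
Your proof is correct and follows essentially the same route as the paper: implicitly differentiate the relation $J = A|\log J|^{1/2} + B$ on the set $\{J < r+\alpha_0+\beta_0\}$, drop the non-positive term involving $A/(2J|\log J|^{1/2})$, and then bound $\dot A$ and $\dot B$ via the characteristic ODEs and Lemma~\ref{lem:Jbasics}\eqref{item:Jineqs}. The only cosmetic difference is that the paper phrases the argument in terms of the positive part $[\dot J]_+$ rather than directly bounding $\dot J$, and handles the running-supremum derivative by comparing with $[\tfrac{d}{ds}|{\bf X}^N(s)-{\bf Y}^N(s)|_\infty]_+$; your treatment of these points is equivalent.
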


\begin{proof}
By Lemma~\ref{lem:J_Lip}, $J$ is Lipschitz as a function of $t$. Since $J$ takes values in the interval $( \alpha_0 + \beta_0 , \frac{1}{e}) $, then $- | \log J |^{1/2}$ is a $C^1$ function of a Lipschitz function and therefore Lipschitz. Hence $- | \log J |^{1/2}$ is differentiable for almost all $t$, and we may represent it in the form
\be
 - | \log J (t) |^{1/2}  =  - | \log J (0) |^{1/2} - \int_0^t \frac{\dd }{\dd s} | \log J (s) |^{1/2}  \dd s .
\ee
For all $s \in [0,t]$ such that $J$ is differentiable at $s$, we may apply the chain rule to obtain $- \frac{\dd }{\dd s} | \log J (s) |^{1/2} = \frac{\dot J (s) }{2 J(s) | \log J (s) |^{1/2} }$. Since this holds for almost all $s \in [0,t]$, we have
\be
 - | \log J (t) |^{1/2} = - | \log J (0) |^{1/2} + \int_0^t \frac{\dot J (s) }{2 J(s) | \log J (s) |^{1/2} } \dd s .
\ee
As we are seeking an upper bound on $- |\log J|^{1/2}$, it will suffice to consider only $s$ for which $\dot J \geq 0$.
We denote the positive part of $\dot J$ by $\left [\dot J \right ]_+$, and deduce that
\be \label{est:loghalf_1}
 - | \log J (t) |^{1/2} \leq - | \log J (0) |^{1/2} + \int_0^t \frac{[\dot J (s) ]_+}{2 J(s) | \log J (s) |^{1/2} } \one_{\{ J(s) < r + \alpha_0 + \beta_0 \}}  \dd s .
\ee
For $s$ such that $J(s) < r + \alpha_0 + \beta_0$, we have the relation
\be \label{eq:J_chain_relation}
J(s) : = |\log J(s) |^{1/2} \, \left (  \|{\bf X}^N- {\bf Y}^N\|_{L^\infty(0,s)} + \alpha_0 \right )+   \|{\bf V}^N- {\bf W}^N\|_{L^\infty(0,s)}  + \beta_0  .
\ee 
For almost all such $s$, $J$ is differentiable at $s$, and we may apply the chain rule to \eqref{eq:J_chain_relation} to obtain
\be
\dot J(s) = - (\dot J(s)) \frac{ \|{\bf X}^N- {\bf Y}^N\|_{L^\infty(0,s)} + \alpha_0 }{2 J(s) |\log J(s)|^{1/2}} + |\log J(s) |^{1/2} \frac{\dd }{\dd s} \|{\bf X}^N- {\bf Y}^N\|_{L^\infty(0,s)}  + \frac{\dd }{\dd s} \|{\bf V}^N- {\bf W}^N\|_{L^\infty(0,s)} .
\ee
When $\dot J \geq 0$, the first term is non-positive. Hence, for almost all $s \in [0,t]$,
\begin{align}
\left [ \dot J(s) \right ]_+ & \leq \left (  |\log J(s) |^{1/2} \frac{\dd }{\dd s} \|{\bf X}^N- {\bf Y}^N\|_{L^\infty(0,s)} + \frac{\dd }{\dd s} \|{\bf V}^N- {\bf W}^N\|_{L^\infty(0,s)} \right ) \one_{\{ \dot J(s) \geq 0 \}} \\
& \leq  |\log J(s) |^{1/2} \left [ \frac{\dd }{\dd s} |{\bf X}^N(s) - {\bf Y}^N(s) |_{\infty} \right ]_+ + \left [  \frac{\dd }{\dd s} |{\bf V}^N(s) - {\bf W}^N(s) |_{\infty} \right ]_+ .
\end{align}

Next, we estimate the derivatives $\left [ \frac{\dd }{\dd s} |{\bf X}^N(s) - {\bf Y}^N(s) |_{\infty} \right ]_+$ and $\left [  \frac{\dd }{\dd s} |{\bf V}^N(s) - {\bf W}^N(s) |_{\infty} \right ]_+$.
We take the difference of the ODE systems \eqref{eq:CompareODE} to obtain
\be \label{eq:ODEDiff}
\begin{cases}
\frac{\dd }{\dd t} (X_i^N - Y_i^N) = V_i^N-  W_i^N \\
\frac{\dd }{\dd t} (V_i^N-  W_i^N) = - \nabla_x \Phi_r [\mu_{{\bf X}^N}] (X_i^N) + \nabla_x \Phi_r [ \rho_{f_r}] (Y_i^N ) .
\end{cases}
\ee
We introduce the shorthand notations
\be
E^{{\bf X}} : = - \nabla_x \Phi_r [ \mu_{{\bf X}^N}] 
\qquad E^f : = - \nabla_x \Phi_r [ \rho_{f_r}] .
\ee
It then follows from \eqref{eq:ODEDiff} that, for almost all $s\in [0,t]$,
\be
\frac{\dd }{\dd s} |{\bf X}^N(s) - {\bf Y}^N(s) |_{\infty} \leq |{\bf V}^N(s) -{\bf W}^N(s)|_\infty \quad  \frac{\dd }{\dd s} |{\bf V}^N(s) - {\bf W}^N(s) |_{\infty}  \leq \max_i | E^{ {\bf X}}(s,X_i^N(s)) - E^f(s, Y_i^N(s))| .
\ee
Hence
\be
\left [ \dot J(s) \right ]_+ \leq |\log J(s) |^{1/2} \|{\bf V}^N- {\bf W}^N\|_{L^\infty(0,s)}  + \max_i | E^{{\bf X}}(s, X_i^N(s)) - E^f(s, Y_i^N(s))| .
\ee
By Lemma~\ref{lem:Jbasics}\eqref{item:Jineqs}, if $\dot J(s) > 0$ then $J(s) < r + \alpha_0 + \beta_0$, and hence $\|{\bf V}^N- {\bf W}^N\|_{L^\infty(0,s)}  \leq J(s)$. Thus
\be \label{est:JdotPlus}
\left [ \dot J \right ]_+ \leq |\log J |^{1/2} J + \max_i | E^{{\bf X}}(X_i^N) - E^f(Y_i^N)| \qquad a.e.
\ee
We conclude that, by \eqref{est:loghalf_1},
\begin{align} 
 - | \log J (t) |^{1/2} & \leq - | \log J (0) |^{1/2} + \frac{1}{2}  \int_0^t \left ( 1 + \frac{ \max_i | E^{{\bf X}}(s, X_i^N(s)) - E^f(s, Y_i^N(s))|}{J(s) | \log J (s) |^{1/2} } \right )  \one_{\{ J(s) < r + \alpha_0 + \beta_0 \}}  \dd s \\ \label{est:loghalf_2}
& \leq - | \log J (0) |^{1/2} + \frac{1}{2}  \int_0^t \left ( 1 + \frac{ \max_i | E^{{\bf X}}(s, X_i^N(s)) - E^f(s, Y_i^N(s))|}{J(s) | \log J (s) |^{1/2} } \right )   \one_{\{  \|{\bf X}^N- {\bf Y}^N\|_{L^\infty(0,s)} < r \}}  \dd s .
\end{align}

\end{proof}

\noindent The next step is to obtain an estimate for the term $\max_i | E^{{\bf X}}(X_i^N) - E^f(Y_i^N)|$, on the event ${\{ \|{\bf X}^N- {\bf Y}^N\|_{L^\infty(0,s)}< r \}}$.

\begin{prop} \label{prop:Field}
Let $\gamma, \delta \in (0,1)$. Set $\alpha_0 = r \delta$, $\beta_0 = r \gamma$, and let J be defined as in Definition~\ref{def:J}.
There exists a non-decreasing function $H: [0, + \infty) \to [0, + \infty)$ such that the following holds.

Suppose that Assumption~\ref{hyp:f} holds: there exists a function $D: [0, T) \to [0, + \infty)$ such that, for all $t \in [0,T)$,
\be
\sup_{r \leq \frac{1}{2}, s \in [0,t]} \| \rho_{f_r}(s, \cdot) \|_{L^\infty(\TT^d)} \leq D(t).
\ee 
For all $t \in [0,T)$, let $\Gamma = \Gamma(t, r, \delta, \gamma)$ denote the event
\be
\Gamma : = \left \{ \max_i | E^{{\bf X}}(X_i^N(t)) - E^f(Y_i^N(t))| \leq H(D(t)) |\log J(t)|^{1/2} J(t) \right \} .
\ee
Then $\Gamma$ satisfies
\be
\PP(\Gamma^c \cap\{ \|{\bf X}^N - {\bf Y}^N\|_{L^\infty(0,t)} < r \}) \lesssim (r \delta)^{-d} \exp \left (- C N r^d \frac{\gamma^2}{1 + \gamma} \right ) .
\ee
Moreover, on the event $\Gamma^c$,
\be \label{est:NotGamma}
\max_i | E^{{\bf X}}(X_i^N(t)) - E^f(Y_i^N(t))| \lesssim r^{-d}.
\ee

\end{prop}

\begin{proof}
Throughout this proof, $t$ is fixed: the $t$-dependence of quantities is therefore suppressed in the notation where possible. 

We begin with the final statement \eqref{est:NotGamma}: we will prove  an $L^\infty$ estimate for $E^{{\bf X}}$ and $E^f$ that is always true, and hence in particular holds on $\Gamma^c$.
For any probability measure $\mu \in \mc{P}(\TT^d)$, the solution $\Phi_r[\mu]$ of the equation
\be
- \Delta \Phi_r[\mu] = \chi_r \ast \mu - e^{\Phi_r[\mu]} .
\ee
satisfies
\be
\nabla \Phi_r[\mu] = K \ast (\chi_r \ast \mu - e^{\Phi_r[ \mu ]}) .
\ee
By Proposition~\ref{prop:potential},
\be
\| \nabla \Phi_r[\mu] \|_{L^\infty} \leq \| K \|_{L^1} \| \chi_r \ast \mu - e^{\Phi_r[ \mu ]} \|_{L^\infty} \leq \| K \|_{L^1} \| \chi_r \ast \mu \|_{L^\infty} .
\ee
Then, by Lemma~\ref{lem:Fn_bounds}, $\| \chi_r \ast \mu \|_{L^\infty} \lesssim r^{-d}$. Hence
\be
\| \nabla \Phi_r[\mu] \|_{L^\infty} \lesssim r^{-d} .
\ee
It follows that, almost surely
\be
\max_i | E^{{\bf X}}(X_i^N) - E^f(Y_i^N)| \lesssim r^{-d} .
\ee

Now we turn to the analysis of the event $\Gamma$. The strategy is as follows: we will identify an event $\widetilde \Gamma$ satisfying 
\be \label{est:GammaTildeProb}
\PP \left ( \widetilde \Gamma^c \cap \{ \|{\bf X}^N - {\bf Y}^N\|_{L^\infty(0,t)} < r \} \right ) \lesssim  (r \delta)^{-d} \exp \left (- C N r^d \frac{\gamma^2}{1 + \gamma} \right )
\ee 
such that, if we make a suitable choice of the function $H$, we have the inclusion $\widetilde \Gamma \subset \Gamma$. If this is the case, we will then be able to deduce that
\be
\PP \left ( \Gamma^c \cap \{ \|{\bf X}^N - {\bf Y}^N\|_{L^\infty(0,t)} < r \} \right ) \leq \PP (\widetilde \Gamma^c \cap \{ \|{\bf X}^N - {\bf Y}^N\|_{L^\infty(0,t)} < r \}) \lesssim (r \delta)^{-d} \exp \left ( - C N r^d \frac{\gamma^2}{1 + \gamma} \right )
\ee
as desired. To find $\widetilde \Gamma$, we use the law of large numbers estimates from Section~\ref{sec:LLN} and look for a more refined bound on $\max_i | E^{{\bf X}}(X_i^N) - E^f(Y_i^N)|$.

Using the triangle inequality, for each $i=1, \ldots, N$ we write
\be \label{est:triangle}
| E^{{\bf X}}(X_i^N) - E^f(Y_i^N)| \leq | E^{{\bf X}}(X_i^N) - E^f(X_i^N)|
 + | E^f(X_i^N) -E^f(Y_i^N)| .
\ee
We estimate each term in turn.

For the term $ | E^f(X_i^N) -E^f(Y_i^N)|$, by the log-Lipschitz estimate (Proposition~\ref{prop:potential}) and the uniform $L^\infty$ estimate \eqref{est:rho_unif} on $\rho_{f_r}$, we have
\be \label{est:EReg}
 | E^f(X_i^N) -E^f(Y_i^N )|  \lesssim D \, |X_i^N - Y_i^N| \log | X_i^N - Y_i^N |^{-1} \lesssim D \, \|{\bf X}^N-{\bf Y}^N\|_{L^\infty(0,t)} \log \|{\bf X}^N-{\bf Y}^N\|_{L^\infty(0,t)}^{-1}
 \ee
 where the second inequality uses the fact that the function $x \mapsto x|\log x|$ is increasing for $x \in (0, \frac{1}{e})$.

For the term $ | E^{{\bf X}}(X_i^N) - E^f(X_i^N)|$ of \eqref{est:triangle}, we apply Proposition~\ref{prop:PotentialStability} in order to estimate $\left \| E^{{\bf X}} - E^f \right \|_{L^\infty}$ as follows, recalling that $K_r = K \ast \chi_r$: 
\be
\left \| E^{{\bf X}} - E^f \right \|_{L^\infty}  
\leq  \| K_r \ast \mu_{{\bf X}^N} - K_r \ast \rho_{f_r} \|_{L^\infty} \, \exp \left ( C \max \{ \| \chi_r \ast \mu_{{\bf X}^N} \|_{L^\infty}, \| \chi_r \ast \rho_{f_r} \|_{L^\infty} \} \right ) . \label{est:EDiffLinfty}
\ee
We note that, by Assumption~\ref{hyp:f},
\be \label{est:rho_unif}
\| \chi_r \ast \rho_{f_r} \|_{L^\infty} \leq D \quad \text{for all } r \in (0, \frac{1}{4} ) .
\ee 
It therefore suffices to obtain suitable estimates on $\| \chi_r \ast \mu_{{\bf X}^N} - \chi_r \ast  \rho_{f_r} \|_{L^\infty}$ and $\| K_r \ast \mu_{{\bf X}^N} - K_r \ast \rho_{f_r} \|_{L^\infty}$.

With this in mind, we apply Corollary~\ref{cor:LLN-Approx} to the $N$-tuples ${\bf X} = {\bf X}^N(t)$ and ${\bf Y}= {\bf Y}^N(t)$, for the cases $g = K_r^{(j)}$ for each coordinate $j=1, \ldots, d$ and $g = r \chi_r$. We obtain the events $B_{K_r^{(j)}}(\gamma, \delta)$ and $B_{r \chi_r}(\gamma, \delta)$. We then define the event
\be
\widetilde \Gamma : = \{ \|{\bf X}^N - {\bf Y}^N\|_{L^\infty(0,t)} < r \} \cap \bigcap_{j=1}^d B_{K_r^{(j)}}(\gamma, \delta) \cap B_{r \chi_r}(\gamma, \delta).
\ee 
Then $\widetilde \Gamma$ satisfies \eqref{est:GammaTildeProb}: by Definition~\ref{def:moduli}, Lemma~\ref{lem:Fn_bounds} and Corollary~\ref{cor:LLN-Approx}, we have for each $j$
\begin{align}
\PP (\{ |{\bf X}^N-{\bf Y}^N|_\infty < r  \} \cap B_{K_r^{(j)}}(\gamma, \delta)^c) & \lesssim  (r \delta)^{-d}  \exp{\left (- C N r^d \| \rho_{f_r(s)} \|_{L^\infty} \frac{ \gamma^2}{ 1 +  \gamma  } \right )}  \\
& \lesssim (r \delta)^{-d} \exp \left (- C N r^d \frac{\gamma^2}{1 + \gamma} \right ),
\end{align}
since $1 = \| \rho_{f_r} \|_{L^1(\TT^d)} \lesssim \| \rho_{f_r} \|_{L^\infty(\TT^d)} $ as the torus has finite volume. Similarly
\be
\PP (\{ |{\bf X}^N-{\bf Y}^N|_\infty < r  \} \cap B_{r \chi_r}(\gamma, \delta)^c) \lesssim (r \delta)^{-d} \exp \left (- C N r^d \frac{\gamma^2}{1 + \gamma} \right ),
\ee
and \eqref{est:GammaTildeProb} follows by summation, since \be
\widetilde \Gamma^c \cap \{ \|{\bf X}^N - {\bf Y}^N\|_{L^\infty(0,t)} < r \} \subset \{ |{\bf X}^N-{\bf Y}^N|_\infty < r  \} \cap \left( \bigcup_{j=1}^d B_{K_r^{(j)}}(\gamma, \delta)^c \cup B_{r \chi_r}(\gamma, \delta)^c \right ).
\ee

We continue our estimation of $\left \| E^{{\bf X}} - E^f \right \|_{L^\infty}$ on the event $\widetilde \Gamma$.
Using the definitions of $B_{K_r^{(j)}}(\gamma, \delta)$ and $B_{r \chi_r}(\gamma, \delta)$ in Corollary~\ref{cor:LLN-Approx}, on $\widetilde \Gamma$ we have
\begin{align}
& \| K_r \ast \mu_{{\bf X}^N} - K_r \ast \rho_{f_r} \|_{L^\infty} \lesssim  D \left [ (\gamma + \| L_r  \|_{L^1} + r \| Q_r \|_{L^1})  |{\bf X}^N-{\bf Y}^N|_\infty +  r \left (  \delta \| L_r  \|_{L^1} +  \gamma \right ) \right ] \\
& r \| \chi_r \ast \mu_{{\bf X}^N} - \chi_r \ast \rho_{f_r} \|_{L^\infty} \lesssim D \left [ (\gamma + r \| \psi_r  \|_{L^1} + r^2 \| \eta_r  \|_{L^1})  |{\bf X}^N-{\bf Y}^N|_\infty +  r \left (  \delta \| r \psi_r  \|_{L^1} +  \gamma \right ) \right ] .
\end{align}
Then, by Lemma~\ref{lem:Fn_bounds},
\begin{align}
& \| K_r \ast \mu_{{\bf X}^N} - K_r \ast \rho_{f_r} \|_{L^\infty} \lesssim D  \left [ |\log r|   (1 + \gamma ) \left (  |{\bf X}^N-{\bf Y}^N|_\infty + r \delta \right )  + r  \gamma \right ]  \\
& \| \chi_r \ast \mu_{{\bf X}^N} - \chi_r \ast \rho_{f_r} \|_{L^\infty} \lesssim D \left ( (1 + \gamma ) r^{-1} |{\bf X}^N-{\bf Y}^N|_\infty +  \delta +  \gamma \right ) .
\end{align}
Since $|{\bf X}^N-{\bf Y}^N|_\infty \leq \|{\bf X}^N-{\bf Y}^N\|_{L^\infty(0,t)} < r$,  $\delta < 1$, and $\gamma < 1$, we deduce that
\begin{align}
 \| K_r \ast \mu_{{\bf X}^N} - K_r \ast \rho_{f_r} \|_{L^\infty} & \lesssim D  \left [ |\log r|  \left ( \|{\bf X}^N-{\bf Y}^N\|_{L^\infty(0,t)} + r \delta \right )  + r  \gamma \right ]  \\
 \| \chi_r \ast \mu_{{\bf X}^N} \| _{L^\infty} & \lesssim D .
\end{align}
Hence
from \eqref{est:EDiffLinfty} we obtain
\be
\left \| E^{{\bf X}} - E^f \right \|_{L^\infty} \leq e^{C D} \left (  |\log r| \left ( \|{\bf X}^N-{\bf Y}^N\|_{L^\infty(0,t)} + r \delta \right ) + r \gamma \right ) . \label{est:EStab}
\ee

 By substituting estimates \eqref{est:EReg} and \eqref{est:EStab} into the inequality \eqref{est:triangle} and taking the maximum over $i$, we obtain that on $\widetilde \Gamma$ the following bound holds:
 \be \label{est:EDiff_XY}
\max_i  | E^{{\bf X}}(X_i^N) - E^f(Y_i^N)| \lesssim_D  |\log r| \left ( \|{\bf X}^N-{\bf Y}^N\|_{L^\infty(0,t)} + r \delta \right ) +  \|{\bf X}^N-{\bf Y}^N\|_{L^\infty(0,t)} \log \|{\bf X}^N-{\bf Y}^N\|_{L^\infty(0,t)}^{-1} + r \gamma ,
 \ee
 where the constant is of the form $e^{CD}$ for some $C>0$ independent of $D$.
 
  To complete the proof, we must show that by choosing $H$ appropriately we can ensure that $\widetilde \Gamma \subset \Gamma$. 
Our goal is therefore to bound the right hand side of \eqref{est:EDiff_XY} in terms of $J$. 
 From Lemma~\ref{lem:Jbasics} we obtain the inequalities
 \begin{align}
\|{\bf X}^N-{\bf Y}^N\|_{L^\infty(0,t)} + r \delta & < 2r \label{est:particles-J_1} \\
\|{\bf X}^N-{\bf Y}^N\|_{L^\infty(0,t)} + r \delta & \leq J |\log J|^{-1/2} \label{est:particles-J_2} \\
 r \gamma  & \leq J \label{est:particles-J_3} . 
 \end{align}
Since the function $x \mapsto |\log x|$ is decreasing for $x \in (0, \frac{1}{e})$, by inequality \eqref{est:particles-J_1} we have
\be
|\log 2r | \leq \left | \log \left ( \|{\bf X}^N-{\bf Y}^N\|_{L^\infty(0,t)} + \delta r  \right ) \right | .
\ee
However, by the same token $ |\log r| \geq |\log 2r|$. Nevertheless, since $|\log r| = |\log 2r| + \log 2$ and $|\log 2r| \geq 1$, we have $|\log r| \leq (1 + \log 2) |\log 2r|$ and hence
\be \label{est:logr-J}
|\log r |  \lesssim  \left | \log \left (  \|{\bf X}^N-{\bf Y}^N\|_{L^\infty(0,t)} + \delta r  \right ) \right | .
\ee
Since the function $x \mapsto x |\log x|$ is increasing for $x \in (0, \frac{1}{e})$, by substituting inequality \eqref{est:logr-J} into \eqref{est:EDiff_XY} and then applying inequalities \eqref{est:particles-J_2}-\eqref{est:particles-J_3} we obtain

 \be
\max_i  | E^{{\bf X}}(X_i^N) - E^f(Y_i^N)| \leq e^{C(1 +D)}  J |\log J|^{-1/2}  | \log (J |\log J|^{-1/2} ) | + J .
 \ee
 
We conclude by observing that
 \be
  | \log (J |\log J|^{-1/2} ) | \lesssim |\log J| .
 \ee
Indeed, since $J \leq \frac{1}{e}$ and thus $J |\log J|^{-1/2} \leq J \leq 1$, from the inequality $\log x \leq x - 1$ we obtain
\begin{align}
  | \log (J |\log J|^{-1/2} ) | & = - \log J + \frac{1}{2} \log (- \log J ) \leq \frac{3}{2} |\log J| .
\end{align}
 We have thus shown that, on the event $\widetilde \Gamma$,
\be
\max_i  | E^f(X_i^N) -E^f(Y_i^N )| \leq e^{C(1 + D)} \, J |\log J|^{1/2}  
\ee
for some constant $C > 0$ independent of $D$. Finally, choose the function $H(D) = e^{C(1 + D)}$, noting that this is an increasing function of $D$. Then $\widetilde \Gamma \subset \Gamma$, and the proof is complete.

\end{proof}

\begin{prop}
Let $\epsilon \in (0, \frac{1}{d})$. Assume that $r = r(N) \geq b N^{-1/d + \epsilon}$. Then, for any $\sigma \in (0, d \epsilon)$, for all $N \in \mb{N}$,
\be 
\PP \left ( \|{\bf X}^N-{\bf Y}^N\|_{L^\infty(0,t)} +\|{\bf V}^N-{\bf W}^N\|_{L^\infty(0,t)} \geq r \right ) \lesssim_{\e, \sigma, b} e^{-C_{b} N^\sigma} .
\ee

\end{prop}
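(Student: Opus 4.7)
The plan is to combine the nonlinear kinetic distance integral inequality of Lemma~\ref{lem:J_integral} with the probabilistic field stability bound of Proposition~\ref{prop:Field}, and to calibrate the free parameters $\delta$ and $\gamma$ against the cut-off scale. Since $r = c_0 N^{-1/d + \epsilon}$ gives $N r^d \sim N^{d\epsilon}$, for a fixed $\sigma \in (0, d\epsilon)$ I would choose $\delta = \gamma := N^{-(d\epsilon - \sigma)/2} \in (0,1)$ (for $N$ large) and set $\alpha_0 = r\delta$, $\beta_0 = r\gamma$ in Definition~\ref{def:J}. This yields $N r^d \gamma^2 / (1+\gamma) \sim N^\sigma$, so Proposition~\ref{prop:Field} provides the pointwise bound $\PP(\Gamma(s)^c \cap \{\|{\bf X}^N-{\bf Y}^N\|_{L^\infty(0,s)} < r\}) \lesssim e^{-CN^\sigma}$, where the polynomial prefactor $(r\delta)^{-d}$ is absorbed into the exponent.

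On the event $\Omega^\star := \bigcap_{s\in[0,t]} \Gamma(s, r, \delta, \gamma)$, Proposition~\ref{prop:Field} controls the maximum field difference by $H(D(s))\, J(s)\,|\log J(s)|^{1/2}$ whenever $\|{\bf X}^N-{\bf Y}^N\|_{L^\infty(0,s)} < r$. Substituting into Lemma~\ref{lem:J_integral} collapses the integrand to the $N$-independent quantity $\tfrac{1}{2}(1 + H(D(s)))$, giving
\[
|\log J(t)|^{1/2} \geq |\log J(0)|^{1/2} - C_t, \qquad C_t := \tfrac{1}{2}\int_0^t (1 + H(D(s)))\,ds.
\]
Since the initial differences vanish, $J(0)$ is determined by $J(0) = \alpha_0 |\log J(0)|^{1/2} + \beta_0 \lesssim r\gamma \sqrt{\log N}$, whence $|\log J(0)|^{1/2} \geq (1 - o(1))\sqrt{a \log N}$ with $a := 1/d - \epsilon + (d\epsilon - \sigma)/2 > 1/d - \epsilon$. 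Exponentiating yields $J(t) \leq N^{-a}\, e^{O(\sqrt{\log N})}$, which is much smaller than $r/4 \sim N^{-1/d + \epsilon}$ for $N$ large. By Lemma~\ref{lem:Jbasics}(iii) this translates, on $\Omega^\star$, into
\[
\|{\bf X}^N-{\bf Y}^N\|_{L^\infty(0,t)} + \|{\bf V}^N-{\bf W}^N\|_{L^\infty(0,t)} \leq 2 J(t) \leq \tfrac{c_0}{2} N^{-1/d+\epsilon}.
\]
The side condition $\|{\bf X}^N-{\bf Y}^N\|_{L^\infty(0,s)} < r$ required to invoke Proposition~\ref{prop:Field} is enforced by a stopping-time argument: let $\tau := \inf\{s : \|{\bf X}^N-{\bf Y}^N\|_{L^\infty(0,s)} \geq r\}$; the estimate above applies on $[0, \tau \wedge t]$, and the strict bound $J(\tau \wedge t) < r/4$ precludes $\tau \leq t$.

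The main obstacle is upgrading the pointwise-in-$s$ probability estimate of Proposition~\ref{prop:Field} to uniform-in-$s$ control over $\Omega^\star$. I would introduce a temporal grid $\{s_k\}$ of mesh $\Delta = N^{-\kappa}$ for some fixed $\kappa > 0$, apply Proposition~\ref{prop:Field} at the $\sim N^\kappa$ grid points, and handle intermediate $s \in [s_k, s_{k+1}]$ using the $r$-local Lipschitz moduli $L_r, \psi_r$ from Definition~\ref{def:moduli} together with a high-probability bound on the maximum particle speed (which can be obtained from $\rho_{f_r} \in L^\infty$ and the moment hypotheses on $f_0$, or incorporated into the definition of $\Omega^\star$). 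Choosing $\kappa$ large enough makes the inter-grid perturbation small compared with the target tolerance $r\gamma$. A polynomial union bound over the grid then gives $\PP((\Omega^\star)^c) \lesssim N^\kappa e^{-CN^\sigma} \lesssim e^{-C'N^\sigma}$ for any $C' < C$, completing the proof.
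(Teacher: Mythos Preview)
Your parameter calibration and the Gr\"onwall-type argument on the ``good'' event are correct and essentially identical to the paper's: the choice $\gamma = \delta = N^{-(d\epsilon - \sigma)/2}$ coincides with the paper's $\gamma = \delta = r^\theta$ for $\theta = \frac{d(d\epsilon - \sigma)}{2(1 - d\epsilon)}$, and the conclusion that $J(t) \ll r$ on the good event is deduced in the same way from Lemma~\ref{lem:J_integral} and Proposition~\ref{prop:Field}.

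The genuine gap is in your passage from the pointwise-in-$s$ estimate of Proposition~\ref{prop:Field} to control of the uncountable intersection $\Omega^\star = \bigcap_{s \in [0,t]} \Gamma(s)$. Your temporal-grid strategy requires interpolating the events $\Gamma(s_k)$ to neighbouring times, and for this you invoke a high-probability bound on the maximum particle speed. But the proposition (and Theorem~\ref{thm:main}) is stated under Assumption~\ref{hyp:f} alone, with no moment hypothesis on $f_0$; bounded spatial density does not imply any tail decay for $\max_i |v_i^{0,N}|$, and without such decay you cannot make $\PP(\text{speed bad})$ exponentially small in $N$ as the target bound $e^{-CN^\sigma}$ requires. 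Incorporating the speed event into $\Omega^\star$ does not help, since its failure probability would then dominate.

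The paper avoids this entirely by \emph{not} requiring $\Gamma(s)$ to hold for all $s$. Instead, it keeps the crude deterministic bound $\max_i |E^{\bf X} - E^f| \lesssim r^{-d}$ from Proposition~\ref{prop:Field} on $\Gamma(s)^c$, combined with the lower bound $J(s)|\log J(s)|^{1/2} \gtrsim r^{1+\theta}$ coming from $\beta_0 = r\gamma$, to write
\[
-|\log J(t)|^{1/2} + |\log J(0)|^{1/2} - tH(D(t)) \;\lesssim\; r^{-d-1-\theta} \int_0^t \one_{\Gamma(s)^c}\,\dd s .
\]
Taking expectation and applying Fubini gives $\EE\!\int_0^t \one_{\Gamma(s)^c}\,\dd s = \int_0^t \PP(\Gamma(s)^c)\,\dd s$, which is controlled directly by the pointwise bound of Proposition~\ref{prop:Field} with no union over $s$. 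Markov's inequality then yields the claimed tail estimate. This expectation--Fubini--Markov device is the missing idea in your proposal; it removes any need for time discretisation, Lipschitz interpolation, or speed bounds.
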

\begin{proof}
Let $\theta : = \frac{d(d \epsilon   - \sigma )}{2( 1 - d\epsilon)} > 0$. For all $N$ large enough (depending on $b$ and $\e$), $r \leq \frac{1}{e}$: in this case, set $\gamma = \delta = r^\theta$ and note that $\gamma, \delta < 1$. Then define $J$ as in Definition~\ref{def:J} with $\alpha_0 = r \delta$, $\beta_0 = r \gamma$.

By Lemma~\ref{lem:J_integral},
\be
- |\log J(t)|^{1/2} \leq - | \log J (0) |^{1/2} +  \int_0^t 1 + \frac{ \max_i | E^{{\bf X}}(X_i^N(s)) - E^f(Y_i^N(s))|}{2 J(s) | \log J (s) |^{1/2} } \one_{\{ J(s) < r(1 + \gamma + \delta)  \}}  \dd s .
\ee
We apply Proposition~\ref{prop:Field}: there exists
a non-decreasing function $H$ such that, for each $s \in [0,t]$, the event
\be
\Gamma (s) : = \left \{ \max_i | E^{{\bf X}}(X_i^N(s)) - E^f(Y_i^N(s))|  \leq H(D(s)) |\log J(s) |^{1/2} J(s) \right \} 
\ee
satisfies
\be \label{est:prob_Gamma}
\PP(\Gamma(s)^c \cap \{ J(s) < r(1 + \gamma + \delta)  \}) \lesssim (r \delta)^{-d} \exp \left (- C N r^d \frac{\gamma^2}{1 + \gamma} \right ) ,
\ee
where we have used the fact that $J(s) < r(1 + \gamma + \delta)$ implies that $\|{\bf X}^N - {\bf Y}^N \|_{L^\infty(0,s)} < r$ (Lemma~\ref{lem:Jbasics}(iii)).
Moreover, on the event $\Gamma(s)^c$ we have the bound
\be
\max_i | E^{{\bf X}}(X_i^N(s)) - E^f(Y_i^N(s))| \lesssim  r^{-d} .
\ee
Hence
\be
- |\log J(t)|^{1/2}  + | \log J (0) |^{1/2}  
\leq  \int_0^t 1 + H(D(s)) + \frac{C r^{-d}}{J(s) | \log J (s) |^{1/2} }  \one_{\Gamma(s)^c \cap \{ J(s) < r(1 + \gamma + \delta)  \}}  \dd s .
\ee
Since $J(s) | \log J (s) |^{1/2} \geq J(s) \gtrsim r^{1+\theta}$, we deduce that (redefining the function $H$ as needed),
\be
- |\log J(t)|^{1/2}  + | \log J (0) |^{1/2} \leq  \int_0^t H(D(s)) + C r^{-d-1-\theta} \one_{\Gamma(s)^c \cap \{ J(s) < r(1 + \gamma + \delta)  \}} \dd s .
\ee
Since $H$ and $D$ are non-decreasing functions of their respective arguments,
\be
- |\log J(t)|^{1/2}  + | \log J (0) |^{1/2} \leq  t H(D(t)) + C r^{-d-1-\theta} \int_0^t \one_{\Gamma(s)^c \cap \{ J(s) < r(1 + \gamma + \delta)  \}} \dd s .
\ee

From this, we deduce a one-sided bound in probability. Subtracting $t H(D(t))$ from both sides and taking the expectation of the positive part gives
\be
\EE \left ( - |\log J(t)|^{1/2}  + | \log J (0) |^{1/2} - t H(D(t)) \right )_+ \lesssim r^{-d-1-\theta} \int_0^t   \PP( \Gamma(s)^c \cap \{ J(s) < r(1 + \gamma + \delta)  \})  \dd s .
\ee
Then, by \eqref{est:prob_Gamma}, we obtain the estimate
\be
\EE \left ( - |\log J(t)|^{1/2}  + | \log J (0) |^{1/2} - t H(D(t)) \right )_+ \lesssim t r^{- d -(d +1)(1 + \theta)} \exp \left (- C N r^{d + 2 \theta} \right ) .
 \ee
Hence, by Markov's inequality, 
 \be
 \PP \left [ \left ( - |\log J(t)|^{1/2}  + | \log J (0) |^{1/2} - t H(D(t)) \right )_+ > t \right ] \lesssim  r^{- d -(d +1)(1 + \theta)} \exp \left (- C N r^{d + 2 \theta} \right ) .
 \ee
 
 To complete the proof, we show that if $ \left ( - |\log J(t)|^{1/2}  + | \log J (0) |^{1/2} - t H(D(t)) \right )_+ \leq t$, then $\|{\bf X}^N-{\bf Y}^N\|_{L^\infty(0,t)} +\|{\bf V}^N-{\bf W}^N\|_{L^\infty(0,t)} < r$ for all sufficiently small $r$.
 In this case, we have
 \be
 - |\log J(t)|^{1/2}  + | \log J (0) |^{1/2} \leq \left ( 1 + H(D(t)) \right ) t,
 \ee
 which implies that
  \be \label{est:Jfinal_J0}
 J(t)  \leq \exp \left [ -  \left (   | \log J (0) |^{1/2} - (1 + H (D(t)) ) t \right )^2_+ \right ] .
 \ee 
We need to show that $J(t)$ is strictly smaller than the truncation level $r + \alpha_0 + \beta_0$, in order to ensure that it controls the distance between the particle systems (recall Definition~\ref{def:J}).
 
 To do this, we estimate $J(0)$: since we chose $\alpha_0 = r \delta = r^{1 + \theta}$ and $\beta_0 = r \gamma = r^{1 + \theta}$ above, and the initial data of the two particle systems coincide, we have
 \be \label{recall_def:J0}
J(0) = (r + 2 r^{1 + \theta} ) \wedge \left [ \left (1 +  \sqrt{|\log J(0) |} \right ) r^{1 + \theta} \right ] .
\ee 
Consequently $J(0) > r^{1+\theta}$ and hence $ | \log J (0) |^{1/2} \leq  |\log r^{1+\theta}|^{1/2}$. By substituting this bound back into \eqref{recall_def:J0}, we obtain that  $J(0) \leq r^{1+\theta} ( 1+  |\log r^{1+\theta}|^{1/2} )$. 
Then
\be \label{est:logJ0final}
| \log J(0)|^{1/2} \geq \left ( | \log r^{1+\theta}| - \log ( 1+  |\log r^{1+\theta}|^{1/2} ) \right )^{1/2}\geq  | \log r^{1+\theta}|^{1/2} -  |\log ( 1+  |\log r^{1+\theta}|^{1/2} )|^{1/2} ,
\ee
where we have used the inequality ${(y - x)^{1/2}} \geq y^{1/2} - x^{1/2}$ for $y \geq x \geq 0$, which may be applied here since $\log (1+z^{1/2}) \leq z^{1/2} \leq z$ for all $z \geq 1$.

From \eqref{est:Jfinal_J0} and \eqref{est:logJ0final}, we then have
 \be
r^{-1} J(t)  \leq  \exp \left [|\log r|  -  \left (  | \log r^{1+\theta}|^{1/2} -  |\log ( 1+  |\log r^{1+\theta}|^{1/2} )|^{1/2} - (1 + H(D(t)) ) t \right )^2_+ \right ] .
 \ee
 It follows that $r^{-1} J < 1$ for $r$ such that 
 \be (\sqrt{1+\theta} - 1)|\log r |^{1/2} - |\log ( 1+  |\log r^{1+\theta}|^{1/2} )|^{1/2} - \left (1 + H(D(t)) \right ) t > 0,
 \ee 
 which is the case for all $r < r_\ast$, for some $r_\ast$ depending on $t$, $\theta$ and $D(t)$.
 
 Then, for all $r<r_\ast$, we have $J(t) < r(1 + \gamma + \delta)$, and hence,
 by Lemma~\ref{lem:Jbasics}, $J$ controls the distance between the particle systems:
\be
\|{\bf X}^N-{\bf Y}^N\|_{L^\infty(0,t)} +\|{\bf V}^N-{\bf W}^N\|_{L^\infty(0,t)} \leq J(t) - r(\gamma + \delta) < r .
\ee
Therefore, for $r < r_\ast$,
 \begin{align}
\PP \left ( \|{\bf X}^N-{\bf Y}^N\|_{L^\infty(0,t)} +\|{\bf V}^N-{\bf W}^N\|_{L^\infty(0,t)} \geq r\right )
& \lesssim r^{- d -(d +1)(1 + \theta)} \exp \left (- C N r^{d + 2 \theta} \right ) .
 \end{align}

Now we use the assumption that $r \geq b N^{- \frac{1}{d} + \epsilon}$, which implies that
\be
r^{- d -(d +1)(1 + \theta)} \exp \left (- C N r^{d + 2 \theta} \right ) \lesssim_b \exp \left ( C'_{\theta} |\log N| - C_b N^{d \epsilon   - 2 \theta ( \frac{1}{d} - \epsilon) } \right ) .
\ee
Thus for sufficiently large $N$ (depending on $t$, $D(t)$, $b$, $\epsilon$ and $\sigma$),
\be
\PP \left ( \|{\bf X}^N-{\bf Y}^N\|_{L^\infty(0,t)} +\|{\bf V}^N-{\bf W}^N\|_{L^\infty(0,t)} \geq r \right ) \lesssim_b \exp \left (  - C_b N^{d \epsilon   - 2 \theta ( \frac{1}{d} - \epsilon) } \right ) \lesssim_b \exp \left (  - C_b N^{\sigma} \right ) .
\ee 

For small $N$, the probability is bounded by one: hence, by possibly adjusting the constant in front of the exponential we conclude that 
\be
\PP \left ( \|{\bf X}^N-{\bf Y}^N\|_{L^\infty(0,t)} +\|{\bf V}^N-{\bf W}^N\|_{L^\infty(0,t)} \geq r \right ) \lesssim_{t, D(t), \e, b, \sigma} \exp \left (  - C_b N^{\sigma} \right ) \quad \text{for all} \; \; N \geq 1. 
\ee 
\end{proof}

\section{Convergence to the Ionic Vlasov-Poisson System} \label{sec:VPMEConv}

We conclude this article with the proof of Theorem~\ref{thm:VPMEconv}, which allows us to deduce that the particle system converges not only towards the mean-field dynamics with regularised interaction \eqref{eq:vpme-app}, but to a solution of the original mean-field system \eqref{eq:vpme}.
The result is a consequence of the new estimates in Proposition~\ref{prop:PotentialStability}, paired with the methods of \cite[Proposition 9.1]{LazaroviciPickl}] for the electron Vlasov-Poisson system.

We recall the notation $(Y^r, W^r)$ for the characteristic system associated to the regularised ion Vlasov-Poisson system \eqref{eq:vpme-app}: for all $r \in (0, \frac{1}{4})$ and $(y,w) \in \TT^d \times \R^d$,
\be \label{ODE:FlowReg}
\begin{cases}
\frac{\dd }{\dd t} Y^r(t ; y,w) = W^r(t ; y,w) \\
\frac{\dd }{\dd t} W^r(t ; y,w) = \nabla \Phi_r[\rho_{f_r}] (t, Y^r(t ; y,w)) .
\end{cases}
\ee
Theorem~\ref{thm:VPMEconv} follows from the following proposition.

\begin{prop}
Suppose that Assumption~\ref{hyp:f} holds.
\begin{enumerate}[(i)]
\item There exists a continuous map $(\overline Y, \overline W) : [0, T) \times \TT^d \times \R^d \to \TT^d \times \R^d$ such that $(Y^r, W^r)$ converges to $(\overline Y, \overline W)$ uniformly on $[0,t] \times \TT^d \times \R^d$ for any $t \in [0, T)$, with
\be
\label{est:FlowConvRate_Statement}
 \|Y^r- \overline Y\|_{L^\infty([0,t] \times \TT^d \times \R^d)} + \| W^r - \overline W \|_{L^\infty([0,t] \times \TT^d \times \R^d)} \leq r e^{C(t) |\log r|^{1/2}} .
\qquad t \in [0, T)
\ee
for some function $C \in L^\infty_\loc[0,T)$, which depends on the function $D$ from Assumption~\ref{hyp:f}, the dimension $d$ and the choice of mollifier $\chi$.

\item Let $f \in C \left ( [0,T) ; \mc{P}(\TT^d \times \R^d) \right )$ be the pushforward of $f_0$ along $(\overline Y, \overline W)$. That is, $f$ is 
be a weakly continuous path in the space of probability measures defined by the equality
\be \label{def:f_pushforward}
\int_{\TT^d \times \R^d} \phi(y,w) f (t, y,w) \dd y \dd w = \int_{\TT^d \times \R^d} \phi (\overline Y(t; y,w), \overline W(t; y,w) f_0 (y,w) \dd y \dd w 
\ee
for all $t \in [0,T)$ and $\phi \in C_b(\TT^d \times \R^d)$. Then $f$ is the unique weak solution of the ionic Vlasov-Poisson system \eqref{eq:vpme} with initial datum $f_0$ and satisfying $\rho_f \in L^\infty_\loc( [0, T) ; L^\infty(\TT^d))$.

Moreover $(\overline Y, \overline W)$ satisfies
\be \label{ODE:FlowBar}
\begin{cases}
\frac{\dd }{\dd t} \overline Y (t ; y,w) = \overline W(t ; y,w) \\
\frac{\dd }{\dd t} \overline W (t ; y,w) = \nabla \Phi [\rho_{f}] (t, \overline Y(t ; y,w)) .
\end{cases}
\ee
\end{enumerate}
\end{prop}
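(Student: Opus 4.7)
The plan is to prove (i) by showing $\{(Y^r,W^r)\}_{0<r<1/4}$ is Cauchy in $L^\infty([0,t]\times\TT^d\times\R^d)$ as $r\to 0$, with the quantitative rate stated in \eqref{est:FlowConvRate_Statement}, and then to derive (ii) by passing to the limit in the characteristic ODE \eqref{ODE:FlowReg} and the pushforward representation \eqref{def:pushforward}.

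For the Cauchy estimate, fix $0<r_2<r_1<1/4$ and compare the two flows issued from the same initial datum $(y,w)\in\TT^d\times\R^d$. The key bound is on the pointwise field difference
\begin{align}
|\nabla\Phi_{r_1}[\rho_{f_{r_1}}](Y^{r_1}) - \nabla\Phi_{r_2}[\rho_{f_{r_2}}](Y^{r_2})| &\leq |\nabla\Phi_{r_1}[\rho_{f_{r_1}}](Y^{r_1}) - \nabla\Phi_{r_1}[\rho_{f_{r_1}}](Y^{r_2})| \\
&\quad + \|\nabla\Phi_{r_1}[\rho_{f_{r_1}}] - \nabla\Phi_{r_2}[\rho_{f_{r_2}}]\|_{L^\infty}.
\end{align}
The first summand is handled by the log-Lipschitz estimate of Proposition \ref{prop:potential}. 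The second, via Proposition \ref{prop:PotentialStability} and Assumption \ref{hyp:f}, reduces to $e^{C D(t)}\|K_{r_1}\ast\rho_{f_{r_1}}-K_{r_2}\ast\rho_{f_{r_2}}\|_{L^\infty}$, where $K_r:=K\ast\chi_r$. Splitting this further as $(K_{r_1}-K_{r_2})\ast\rho_{f_{r_1}} + K_{r_2}\ast(\rho_{f_{r_1}}-\rho_{f_{r_2}})$, the first piece is of order $r_1 D(t)$ since a short computation based on Lemma \ref{lem:Fn_bounds} and the radial symmetry of $\chi$ gives $\|K_{r_1}-K_{r_2}\|_{L^1}\lesssim r_1$. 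For the second, I would use the pushforward formulas $\rho_{f_{r_i}}(x)=\int\delta(x-Y^{r_i}(y,w))f_0\dd y\dd w$ and the $r_2$-local Lipschitz modulus $L_{r_2}$ of $K_{r_2}$ from Definition \ref{def:moduli}: on the event $\|Y^{r_1}-Y^{r_2}\|_{L^\infty}<r_2$, an argument mirroring Lemma \ref{lem:gApproxEmpirical} produces the bound $\|L_{r_2}\ast\rho_{f_{r_2}}\|_{L^\infty}\|Y^{r_1}-Y^{r_2}\|_{L^\infty}\lesssim|\log r_2|\,D(t)\,\|Y^{r_1}-Y^{r_2}\|_{L^\infty}$, invoking Lemma \ref{lem:Fn_bounds}.

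These estimates fit precisely into the nonlinear anisotropic framework of Section \ref{sec:KineticDistance}: I introduce the quantity $J(t)$ of Definition \ref{def:J} with $\|{\bf X}^N-{\bf Y}^N\|$ replaced by $\|Y^{r_1}-Y^{r_2}\|_{L^\infty(\TT^d\times\R^d)}$ and similarly for velocities, and with offsets $\alpha_0,\beta_0\sim r_1$. Since the present comparison is purely deterministic, the law-of-large-numbers terms of Section \ref{sec:LLN} are absent; the dynamical estimate of Lemma \ref{lem:J_integral} combined with the field bound above yields $J(t)\lesssim r_1\exp(C(t)|\log r_1|^{1/2})$ for some $C\in L^\infty_{\loc}[0,T)$. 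This establishes the Cauchy property and defines $(\overline Y,\overline W)$ as the uniform limit, with the rate \eqref{est:FlowConvRate_Statement}. For (ii), uniform convergence of the characteristics combined with \eqref{def:pushforward} gives $f_r\to f:=(\overline Y,\overline W)_\# f_0$ in $C([0,t];\mc P(\TT^d\times\R^d))$; weak-$*$ lower semicontinuity together with Assumption \ref{hyp:f} provides $\rho_f\in L^\infty_{\loc}([0,T);L^\infty(\TT^d))$. Applying Proposition \ref{prop:PotentialStability} once more with $\rho_1=\chi_r\ast\rho_{f_r}$ and $\rho_2=\rho_f$ yields $\nabla\Phi_r[\rho_{f_r}]\to\nabla\Phi[\rho_f]$ uniformly, so I can pass to the limit in \eqref{ODE:FlowReg} to obtain \eqref{ODE:FlowBar}. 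The pushforward identity \eqref{def:f_pushforward} then certifies $f$ as a weak solution of \eqref{eq:vpme} with initial datum $f_0$, and uniqueness within the class of bounded-density weak solutions is provided by the results of \cite{GPIWP,GPIWPR3}.

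The main obstacle is the field-difference estimate in step two. The nonlinear Poisson-Boltzmann coupling precludes any direct convolution-stability argument, forcing the detour through Proposition \ref{prop:PotentialStability}, in which the two cut-off scales $r_1$ and $r_2$ must be tracked in tandem. The log-Lipschitz (rather than Lipschitz) regularity of $\nabla\Phi_r$ is precisely what the weighted distance of Section \ref{sec:KineticDistance} was designed to accommodate, so once the force-difference decomposition above is in place the Gronwall step is essentially a direct transcription of the dynamical estimate already developed for Theorem \ref{thm:main}.
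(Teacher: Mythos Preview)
Your overall strategy is sound and close to the paper's, but there are two points worth flagging.

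First, a technical slip in the decomposition: you split $K_{r_1}\ast\rho_{f_{r_1}}-K_{r_2}\ast\rho_{f_{r_2}}$ so that the density difference is convolved with $K_{r_2}$, and then invoke the $r_2$-local Lipschitz modulus $L_{r_2}$. This forces the continuation argument to maintain $\|Y^{r_1}-Y^{r_2}\|_{L^\infty}<r_2$, the \emph{smaller} scale, while your offsets $\alpha_0,\beta_0\sim r_1$ already push $J(0)$ to order $r_1>r_2$. The fix is trivial: reverse the splitting so that the density difference is convolved with $K_{r_1}$ and use $L_{r_1}$, giving a threshold $r_1$ compatible with your offsets. The paper sidesteps this by quoting an estimate from \cite[Proposition~9.1]{LazaroviciPickl} of the form $\|K_r\ast\rho_{f_r}-K_r\ast\rho_{f_q}\|_{L^\infty}\lesssim|\log r|\,\|Y^r-Y^q\|_{L^\infty}(\|\rho_{f_r}\|_{L^\infty}+\|\rho_{f_q}\|_{L^\infty})$, which holds without any restriction on the displacement.

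Second, and more interestingly, the paper does \emph{not} use the nonlinear distance of Section~\ref{sec:KineticDistance} here. Because the comparison is deterministic and the estimate above is already linear in $\|Y^r-Y^q\|_{L^\infty}$ with coefficient $|\log r|$, it suffices to use the \emph{linear} anisotropic quantity
\[
J(t)=|\log r|^{1/2}\|Y^r-Y^q\|_{L^\infty([0,t]\times\TT^d\times\R^d)}+\|W^r-W^q\|_{L^\infty([0,t]\times\TT^d\times\R^d)},
\]
to which a standard Gr\"onwall gives $J(t)\leq e^{C(t)|\log r|^{1/2}}(r+q)$ directly. Your nonlinear-distance route would also close, but it is heavier machinery than the problem requires: the log-Lipschitz regularity of $\nabla\Phi_r$ is already absorbed by the bound on $K_r\ast(\rho_{f_r}-\rho_{f_q})$, so there is no need for the self-referential weight. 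Part (ii) is handled identically in both approaches.
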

\begin{proof}
Let $r , q > 0$ and consider the flows $(Y^r, W^r)$ and $(Y^q, W^q)$, each of which is a continuous map from $[0, T) \times \TT^d \times \R^d$ to $\TT^d \times \R^d$.

For any given point $(y,w ) \in \TT^d \times \R^d$, by
taking the difference of the ODEs \eqref{ODE:FlowReg} for $(Y^r, W^r)$ and $(Y^q, W^q)$ we can obtain, for any $t \in [0,T)$, the estimates
\begin{align}
& | Y^r(t ; y, w  ) - Y^q(t ; y,w ) | \leq \int_0^t | W^r(s ; y, w  ) - W^q(s ; y,w ) | \dd s \\
&| W^r(t ; y, w  ) - W^q(t ; y,w ) | \leq \int_0^t \left \| \nabla \Phi_r[\rho_{f_r}] (s, \cdot ) - \nabla  \Phi_q[\rho_{f_q}] (s, \cdot ) \right \|_{L^\infty(\TT^d)} \dd s .
\end{align}
Taking the essential supremum over $(y,w ) \in \TT^d \times \R^d$, we obtain
\begin{align} \label{est:FlowDiff_sup1}
& \| Y^r(t ; \cdot  ) - Y^q(t ; \cdot ) \|_{L^\infty(\TT^d \times \R^d)} \leq \int_0^t \| W^r(s ; \cdot  ) - W^q(s ; \cdot ) \|_{L^\infty(\TT^d \times \R^d)} \dd s \\
&\| W^r(t ; \cdot  ) - W^q(t ; \cdot ) \|_{L^\infty(\TT^d \times \R^d)} \leq \int_0^t \left \| \nabla \Phi_r[\rho_{f_r}] (s, \cdot ) - \nabla  \Phi_q[\rho_{f_q}] (s, \cdot ) \right \|_{L^\infty(\TT^d)} \dd s .
\end{align}

To estimate $\left \| \nabla \Phi_r[\rho_{f_r}] - \nabla  \Phi_q[\rho_{f_q}]  \right \|_{L^\infty(\TT^d)}$, we seek a bound on $\| K_r \ast \rho_{f_r} - K_q \ast \rho_{f_q} \|_{L^\infty(\TT^d)}$.
The following estimate is shown in the proof of \cite[Proposition 9.1]{LazaroviciPickl} for $x \in \R^d$, and can be adapted straightforwardly to the case $x \in \TT^d$: if $\tilde f_r$ denotes the pushforward of $f_0$ along the flow $(Y^r, W^r)$, and $\tilde f_q$ denotes the pushforward of $f_0$ along the flow $(Y^q, W^q)$, then
\be \label{est:KDiff_Uniform}
\| K_r \ast \rho_{\tilde f_r(t)} - K_r \ast \rho_{\tilde f_q(t)} \|_{L^\infty(\TT^d)} \lesssim_{d, \chi} |\log r| \| Y^r(t; \cdot) - Y^q(t; \cdot) \|_{L^\infty(\TT^d \times \RR^d)}  ( \| \rho_{\tilde f_r(t)} \|_{L^\infty (\TT^d)} + \| \rho_{\tilde f_q(t)} \|_{L^\infty (\TT^d)}) .
\ee
In this case $\tilde f_r = f_r$ and $\tilde f_q = f_q$, and hence by Assumption~\ref{hyp:f} we have
\be
\| K_r \ast \rho_{\tilde f_r(t)} - K_r \ast \rho_{\tilde f_q(t)} \|_{L^\infty(\TT^d)} \lesssim_{d, \chi} D(t) |\log r| \| Y^r(t; \cdot) - Y^q(t; \cdot) \|_{L^\infty(\TT^d \times \RR^d)}  .
\ee
Since $\| K_r - K_q \|_{L^1(\TT^d)} \leq \| K_r - K \|_{L^1(\TT^d)} + \| K - K_q \|_{L^1(\TT^d)}\lesssim_{d, \chi} r + q$, it follows that
\be
\| (K_r - K_q) \ast \rho_{\tilde f_r(t)} \|_{L^\infty(\TT^d)} \lesssim_{d, \chi}  D(t) (r + q) .
\ee
Hence
\be
\| K_r \ast \rho_{f_r(t)} - K_q \ast \rho_{f_q(t)} \|_{L^\infty(\TT^d)} \lesssim_{d, \chi}  D(t) \left ( |\log r| \| Y^r(t; \cdot) - Y^q(t; \cdot) \|_{L^\infty(\TT^d \times \RR^d)} + r + q \right ) .
\ee
By Proposition~\ref{prop:PotentialStability}, we deduce that
\be
\left \| \nabla \Phi_r[\rho_{f_r(t)}] - \nabla  \Phi_q[\rho_{f_q(t)}]  \right \|_{L^\infty(\TT^d)}  \leq e^{C_{d, \chi} D(t)}  \left ( |\log r| \| Y^r(t; \cdot) - Y^q(t; \cdot) \|_{L^\infty(\TT^d \times \RR^d)} + r + q \right ) .
\ee

Substitute this bound into \eqref{est:FlowDiff_sup1} and take supremum over times less than or equal to $t$ to obtain
\begin{align}
& \| Y^r - Y^q \|_{L^\infty([0,t] \times \TT^d \times \R^d)} \leq \int_0^t \| W^r - W^q \|_{L^\infty([0,s] \times ]\TT^d \times \R^d)} \dd s \\
&\| W^r - W^q \|_{L^\infty([0,t] \times \TT^d \times \R^d)} \leq \int_0^t C(s) \left ( |\log r| \| Y^r - Y^q \|_{L^\infty([0,s] \times \TT^d \times \RR^d)} + r + q \right )  \dd s ,
\end{align}
for some locally bounded function $C : [0,T) \to [0, +\infty)$ depending on $D, d, \chi$.
Hence, if we define the quantity
\be
J(t) : = |\log r |^{1/2}  \|Y^r- Y^q\|_{L^\infty([0,t] \times \TT^d \times \R^d)} +  \| W^r - W^q \|_{L^\infty([0,t] \times \TT^d \times \R^d)} ,
\ee
then
\be
J(t) \leq  \int_0^t C (s) \left ( |\log r|^{1/2} J(s) + r + q \right ) \dd s .
\ee
Therefore, by the integral form of Gr\"onwall's lemma,
\be \label{est:FlowDiff_Cauchy}
J(t) \leq e^{C(t) |\log r|^{1/2}} (r+q) ,
\ee
where $C : [0,T) \to [0, +\infty)$ is a (possibly different) function depending on $d, \chi$ and $D$.

It follows from \eqref{est:FlowDiff_Cauchy} that, for any sequence $r_n > 0$ tending to zero as $n$ tends to infinity, $( Y_{r_n}, W_{r_n})$ is a Cauchy sequence in $C( [0,t] \times \TT^d \times \R^d)$ for any $t \in [0, T)$. A limit $(\overline Y, \overline W)$ therefore exists.
By  passing to the limit in \eqref{est:FlowDiff_Cauchy}, we obtain that for any $r \in (0, \frac{1}{4})$
\be \label{est:FlowConvRate}
 \|Y^r- \overline Y\|_{L^\infty([0,t] \times \TT^d \times \R^d)} + \| W^r - \overline W \|_{L^\infty([0,t] \times \TT^d \times \R^d)} \leq r e^{C(t) |\log r|^{1/2}} .
\ee
This also implies that $(\overline Y, \overline W)$ is the unique limit point. By using this argument for arbitrary $t \in [0, T)$, we may therefore extend to a well-defined limit function $(\overline Y, \overline W) \in C( [0, T) \times \TT^d \times \R^d)$.

Let $f$ by defined as in \eqref{def:f_pushforward}. It remains to show that $f$ is solution of the ionic Vlasov-Poisson system \eqref{eq:vpme}.
We will prove that the electric fields along the flow converge as $r$ tends to zero: namely, that
\be \label{eq:FieldConv}
\lim_{r \to 0} \| \nabla \Phi_r[\rho_{f_r}] (Y^r ) - \nabla \Phi[\rho_f] (\overline Y) \|_{L^\infty ([0,t] \times \TT^d \times \R^d) } = 0 .
\ee
Given \eqref{eq:FieldConv}, we may pass to the limit in the ODE \eqref{ODE:FlowReg} to find that the limiting flow satisfies \eqref{ODE:FlowBar}.
This is sufficient to deduce that $f$ is a weak solution of \eqref{eq:vpme}, using the superposition principle for the representation of solutions of transport equations in terms of characteristic trajectories \cite{DiPL, Ambrosio, AGS}. 
The uniqueness of solutions of \eqref{eq:vpme} with $\rho_f$ bounded in $L^\infty(\TT^d)$ was proved in \cite{GPIWP} for $d=2, \, 3$, and the proof can be adapted to $d > 3$ using the $W_2$ stability estimates in \cite{GPIQN}.

To establish \eqref{eq:FieldConv}, first observe that, for any $(y,w) \in \TT^d \times \R^d$
\begin{multline}
| \nabla \Phi_r[\rho_{f_r}] (Y^r(t; y, w ) - \nabla \Phi[\rho_f] (\overline Y(t; y,w) ) | \leq | \nabla \Phi_r[\rho_{f_r}] (Y^r(t; y, w ) - \nabla \Phi_r[\rho_{f_r}] (\overline Y(t; y,w) ) | \\
+ | \nabla \Phi_r[\rho_{f_r}] (\overline Y(t; y, w ) - \nabla \Phi[\rho_f] (\overline Y(t; y,w) ) | .
\end{multline}
By Proposition~\ref{prop:potential}, 
\begin{multline}
| \nabla \Phi_r[\rho_{f_r}] (Y^r(t; y, w ) - \nabla \Phi[\rho_f] (\overline Y(t; y,w) ) | \leq D(t) |Y^r(t; y, w ) - \overline Y(t; y,w)  | \log |Y^r(t; y, w ) - \overline Y(t; y,w)  |^{-1} \\
 + | [ \nabla \Phi_r[\rho_{f_r}] - \nabla \Phi[\rho_f] ] (\overline Y(t; y,w) ) | .
\end{multline}
Hence
\begin{multline}
\| \nabla \Phi_r[\rho_{f_r}] (Y^r ) - \nabla \Phi[\rho_f] (\overline Y) \|_{L^\infty ([0,t] \times \TT^d \times \R^d) } \leq D(t) \|Y^r- \overline Y\|_{L^\infty([0,t] \times \TT^d \times \R^d)} \log \|Y^r- \overline Y\|_{L^\infty([0,t] \times \TT^d \times \R^d)}^{-1} \\
+ \| \nabla \Phi_r[\rho_{f_r}] - \nabla \Phi[\rho_f] \|_{L^\infty ([0,t] \times \TT^d \times \R^d) } .
\end{multline}

We note that $\| \rho_f \|_{L^\infty(\TT^d)} \leq D(t)$: indeed, $\rho_{f_r(t)}$ converges to $\rho_{f(t)}$ in the sense of weak convergence of measures on $\TT^d$, and thus
\be
\int_{\TT^d} \phi(x)  \rho_f(t,x) \dd x \leq \limsup_{r \to 0} \| \rho_{f_r} \|_{L^\infty (\TT^d)} \| \phi \|_{L^1(\TT^d)} \leq D(t) \| \phi \|_{L^1(\TT^d)} \qquad \forall \phi \in C(\TT^d) 
\ee
and we conclude by density of $C(\TT^d)$ in $L^1(\TT^d)$.

Now consider 
\be
\| K_r \ast \rho_{f_r} - K \ast \rho_f \|_{L^\infty(\TT^d)} \leq \| K_r \ast \rho_{f_r} - K_r \ast \rho_f \|_{L^\infty(\TT^d)} + \| (K_r - K ) \ast \rho_f \|_{L^\infty(\TT^d)}.
\ee
We have $\| (K_r - K ) \ast \rho_f \|_{L^\infty(\TT^d)} \lesssim_{d, \chi} D(t) r$.
By \eqref{est:KDiff_Uniform}, 
\be
\| K_r \ast \rho_{f_r} - K_r \ast \rho_f \|_{L^\infty(\TT^d)} \lesssim_{d, \chi} D(t) |\log r| \|Y^r- \overline Y\|_{L^\infty([0,t] \times \TT^d \times \R^d)} .
\ee
We conclude by \eqref{est:FlowConvRate} that
\be
\| K_r \ast \rho_{f_r} - K \ast \rho_f \|_{L^\infty(\TT^d)}\leq r e^{ C(t) |\log r|^{1/2}} 
\ee
for some function $C \in L^\infty_\loc [0,T)$.
Then, by Proposition~\ref{prop:PotentialStability},
\be
\| \nabla \Phi_r[\rho_{f_r}] - \nabla \Phi[\rho_f] \|_{L^\infty ([0,t] \times \TT^d \times \R^d) } \leq r e^{C(t) |\log r|^{1/2}} 
\ee
(for a possibly different function $C \in L^\infty_\loc [0,T)$).

We conclude that
\be
\| \nabla \Phi_r[\rho_{f_r}] (Y^r ) - \nabla \Phi[\rho_f] (\overline Y) \|_{L^\infty ([0,t] \times \TT^d \times \R^d) } \leq  C(t) \left (1 + |\log r| \right ) r e^{C(t) |\log r|^{1/2}} ,
\ee
and the proof is complete.

\end{proof}

\begin{paragraph}{Acknowledgements.}
The author gratefully acknowledges the support of the Additional Funding Programme for Mathematical Sciences, delivered by EPSRC (EP/V521917/1) and the Heilbronn Institute for Mathematical Research. Aspects of this work were undertaken during the programme `Frontiers in kinetic theory: connecting microscopic to macroscopic scales - KineCon 2022' (supported by EPSRC grant EP/R014604/1) at the Isaac Newton Institute for Mathematical Sciences, Cambridge; as well as during a visit to the Forschungsinstitut für Mathematik at ETH Zürich, both of which the author thanks for support and hospitality.

\end{paragraph}

\bibliographystyle{abbrv}
\bibliography{MFLbib}

\end{document}